\documentclass[a4paper,reqno]{amsart}

\usepackage[T1]{fontenc}
\usepackage{lmodern}
\usepackage[utf8]{inputenc}
\usepackage{microtype}

\usepackage{mathtools,amssymb,amsthm,graphicx,xcolor,tikz,etoolbox}
\usetikzlibrary{calc}
\usepackage[british]{babel}

\usepackage{bbm}

\usepackage[
	bookmarks=true,
	bookmarksnumbered=true,
	bookmarksopen=true,
	unicode=true,
	pdftoolbar=true,
	pdfmenubar=true,
	pdffitwindow=false,
	pdfstartview={FitH},
	pdftitle={Sylvester's four point problem for ball-convex bodies},
	pdfauthor={Alexandra Bako-Szabo, Florian Besau, Ferenc Fodor},
	pdfsubject={Convex geometry, geometric probability},
	pdfkeywords={Sylvester four-point problem, ball-convexity, spindle convexity,
		R-ball-convex hull, random polytope, Efron-Buchta identity, convex position},
	pdfnewwindow=true,
	colorlinks=true,
	linkcolor=black,
	citecolor=black,
	filecolor=black,
	urlcolor=black]{hyperref}

\usepackage{cleveref}

\numberwithin{equation}{section}
\numberwithin{figure}{section}

\usepackage[nobysame,initials,alphabetic]{amsrefs}
\usepackage{orcidlink}
\usepackage{todonotes}

\allowdisplaybreaks
\ifdefined\pdfsuppresswarningpagegroup \pdfsuppresswarningpagegroup=1 \fi
\newtheorem {theorem}{Theorem}[section]
\newtheorem {proposition}[theorem]{Proposition}
\newtheorem {lemma}[theorem]{Lemma}
\newtheorem {corollary}[theorem]{Corollary}

\newtheorem {conjecture}[theorem]{Conjecture}

\theoremstyle{remark}
\newtheorem {remark}[theorem]{Remark}

\newtheorem {question}[theorem]{Question}

\theoremstyle{definition}

\newcommand{\diam}{\operatorname{diam}}
\newcommand{\dint}{\textup{d}}

\newcommand{\conv}{\operatorname{conv}}
\newcommand{\Li}{\operatorname{Li}}
\newcommand{\artanh}{\operatorname{artanh}}
\newcommand{\as}{\operatorname{as}}

\def\EE{\mathbb{E}}
 \def\NN{\mathbb{N}}
 \def\PP{\mathbb{P}}
 \def\RR{\mathbb{R}}
 \def\SS{\mathbb{S}}
  \def\HH{\mathbb{H}}

\def\bX{\mathbf{X}}
\def\bY{\mathbf{Y}}

 \def\ba{\mathbf{a}}
 \def\bb{\mathbf{b}}

 \def\bm{\mathbf{m}}
 
 \def\bt{\mathbf{t}}
 
 \def\bu{\mathbf{u}}
 \def\by{\mathbf{y}}
 \def\bz{\mathbf{z}}
 \def\bx{\mathbf{x}}
 
 \def\bv{\mathbf{v}}
 \def\be{\mathbf{e}}
 \def\bp{\mathbf{p}}
 \def\bq{\mathbf{q}}
 \def\bc{\mathbf{c}}

\def\cL{\mathcal{L}}

\def\cX{\mathcal{X}}

\def\bone{\mathbbm{1}}

\newcommand{\vol}{\operatorname{Vol}}

\newcommand{\inte}{\operatorname{int}}
\newcommand{\ext}{\operatorname{ext}}
\newcommand{\ibeta}{\mathrm{B}}	

\makeatletter
\let\@fnsymbol\@alph
\makeatother

\begin{document}

\title{\bfseries Sylvester's four point problem for ball-convex bodies}

\author[A. Bak\'o-Szab\'o]{Alexandra Bak\'o-Szab\'o$^{\orcidlink{0009-0009-7652-7179}}$}
\address{Bolyai Institute, University of Szeged, Aradi v\'ertan\'uk tere 1, H-6720 Szeged, Hungary}
\email{szaboa@server.math.u-szeged.hu}

\author[F. Besau]{Florian Besau$^{\orcidlink{0000-0002-6596-6127}}$}
\address{Technische Universität Wien, A-1040 Vienna, Austria}
\email{florian.besau@tuwien.ac.at}

\author[F. Fodor]{Ferenc Fodor$^{\orcidlink{0000-0001-9747-1981}}$}
\address{Bolyai Institute, University of Szeged, Aradi v\'ertan\'uk tere 1, H-6720 Szeged, Hungary}
\email{fodorf@math.u-szeged.hu}

\subjclass{Primary: 52A22, 60D05; Secondary: 52A01, 52A10}
\keywords{Sylvester's four point problem, ball-convexity, spindle convexity,
    $R$-ball-convex hull, random polytope, Efron--Buchta identity,
    convex position, random ball-segment}

\date{}

\begin{abstract}
    We investigate Sylvester's classical four-point problem for ball-convex
    bodies, where convex hulls are replaced by intersections of balls of a
    fixed radius $R$.  We show that the Efron--Buchta identities persist in
    this framework, so that the distribution of the number of vertices of the
    $R$-ball-convex hull of $n$ uniform random points is determined by the
    moments of the volumes of the hulls of its subsamples, and vice versa.

    For three and four uniform random points in an arbitrary planar
    ball-convex body, we determine the Sylvester probabilities of the
    $R$-ball-convex hull.

    We also give an extension of Groemer's inequality for the expected area of the ball-convex hull of $n$ random points and show that it is minimised by the disc of the same area. In particular, this yields an isoperimetric inequality for the ball-convex analogue of the affine length of the boundary curve.

    For the unit disc, we can express the Sylvester probabilities using dilogarithmic functions of the radius $R$, and give explicit values at $R=1$. As $R\to\infty$, these recover the classical Sylvester probabilities.

    As another consequence, we determine the exact
    distribution function of the circumradius of three uniform random points in
    the disc on $[1,\infty)$.
\end{abstract}

\maketitle

\section{Introduction and Main Results}

Sylvester's classical four-point problem asks for the probability that four points chosen independently and uniformly at random from a convex body in the plane are in convex position, that is, that they form the vertices of a convex quadrilateral.
The question was posed by Sylvester in 1864 \cite{Sylvester:1864}, and the answer depends on the shape of the convex body. The problem has been studied extensively and is connected to several areas of mathematics, among them combinatorial geometry, probability theory, and convex geometry. We refer to the surveys \cites{Barany:2008,Calka:2019,Hug:2013,Pfiefer:1989,Reitzner:2010,Schneider:2017} for its history and the results surrounding it.

Let $K\subset \RR^2$ be a convex body in the plane, that is, a compact convex set with non-empty interior, and let $\bX_1,\dotsc,\bX_4$ be independent uniform random points in $K$. Their convex hull
\begin{equation*}
    [K]_4 := \conv\{\bX_1,\dotsc,\bX_4\}
\end{equation*}
is a random polygon in $K$ that, with probability $1$, is either a triangle or a quadrilateral; we write $p_{4,4}(K)$ for the probability of the latter event.
The quantity $p_{4,4}(K)$ is affine invariant, that is, $p_{4,4}(AK+\by)=p_{4,4}(K)$ for every $A\in\mathrm{GL}(2,\RR)$ and $\by\in\RR^2$, where $\mathrm{GL}(2,\RR)$ denotes the group of invertible linear transformations of $\RR^2$.
Blaschke \cite{Blaschke:1917} showed that among all planar convex bodies $p_{4,4}(K)$ is maximal for ellipses and minimal for triangles.
For explicit answers for more than four points in the disc and for regular polygons, we refer to Marckert \cite{Marckert:2017}, Marckert and Rahmani \cite{MR:2021} and Morin \cite{Morin:2025}.

More generally, for a $d$-dimensional convex body $K\subset \RR^d$ and integer $n$, one defines the Sylvester probabilities as
\begin{equation*}
    p_{n,k}(K) := \PP\bigl(f_0([K]_n)=k\bigr),
\end{equation*}
where $[K]_n$ is the convex hull of $n$ independent uniform random points in $K$ and $f_0(P)$ denotes the number of vertices of a polytope $P\subset \RR^d$.
For $n\geq d+1$ the random polytope $[K]_n$ is almost surely $d$-dimensional, hence $p_{n,k}(K)=0$ for $k\leq d$ and $(p_{n,k}(K))_{k=d+1}^{n}$ is the probability distribution of the discrete random variable $f_0([K]_n)$. Extending Efron's classical identity \cite{Efron:1965}, Buchta \cite{Buchta:2005}*{Thm.\ 2} showed that
\begin{equation*}
    p_{n,k}(K) = (-1)^k \binom{n}{k} \sum_{i=d+1}^k (-1)^i \binom{k}{i} \frac{\EE \vol_d([K]_i)^{n-i}}{\vol_d(K)^{n-i}}.
\end{equation*}
Therefore, the family of normalised moments $\EE\vol_d([K]_i)^{j}/\vol_d(K)^{j}$, for $d+1\leq i\leq n$ and $0\leq j\leq n-i$, determines the distribution of $f_0([K]_n)$, and vice versa.
In particular, this yields
\begin{equation*}
    p_{d+2,d+2}(K) = \PP\bigl (f_0([K]_{d+2})=d+2\bigr ) = 1 - (d+2) \frac{\EE\vol_d([K]_{d+1})}{\vol_d(K)}.
\end{equation*}
The numbers $p_{n,k}$ record only how many of the $n$ points are vertices; a
finer question, namely the probability of each combinatorial type of $[K]_n$,
was studied recently by Kabluchko and Panzo \cite{KP:2026}, and Sylvester's
problem for beta-type distributions in every dimension, including uniform points on the sphere, by Gusakova and Kabluchko \cite{GK:2025}.
For the planar Gaussian distribution, the four-point probability equals $\frac{6}{\pi}\arcsin\frac13$, see \cite{FNP:2024} for a recent treatment via Gale duality.

Note that $[K]_{d+1}$ is almost surely a $d$-dimensional simplex. For $d\geq 2$ and $n\geq d+1$ the affine invariant quantity $\EE\vol_d([K]_{n})/\vol_d(K)$ is known to be minimal precisely for ellipsoids, by a theorem of Groemer \cite{Groemer:1974}; the companion statement for the simplex spanned by the origin and $d$ uniform points goes back to Busemann \cite{Busemann:1953}. The maximum of $\EE\vol_d([K]_{d+1})/\vol_d(K)$, on the contrary, is still open for
$d\geq3$: simplices are conjectured to be the unique maximisers.  For $d=2$
this is Blaschke's theorem \cite{Blaschke:1917}, which Campi, Colesanti and
Gronchi \cite{CCG:1999} extended by means of shadow systems to all moments of
$\vol_2([K]_3)$, while \cite{CCG:2001} gives partial results in higher
dimensions by shaking.  Extremal inequalities of this type, in which a
classical isoperimetric-type inequality is strengthened to a stochastic
dominance statement for random convex hulls, are surveyed by Paouris and
Pivovarov \cite{PP:2017b}.

\medskip

In this paper, we investigate Sylvester's four-point problem in the context of ball-convexity, a generalisation of classical convexity which has attracted considerable attention in recent years.
For a bounded set $A\subset\RR^d$ and a radius $R>0$, the \emph{$R$-ball-convex hull} $\conv_R(A)$ is the intersection of all closed balls of radius $R$ containing $A$, and a convex body $K$ is called \emph{$R$-ball-convex} if $\conv_R(K)=K$. This is the exact analogue of the classical description of a convex body as the intersection of all closed half-spaces containing it, with half-spaces replaced by balls of radius $R$. In particular, every $R$-ball-convex body is convex in the classical sense, while the converse fails.

The systematic study of this notion goes back to the strong convexity of
Polovinkin \cite{Polovinkin:1996} and Balashov and Polovinkin \cite{BP:2000},
and to Bezdek, L\'angi, Nasz\'odi and Papez
\cite{BLNP:2007} who concentrate on ball-polyhedra. For expositions see the surveys by Artstein-Avidan and
Florentin \cite{AAF:2025}, Bezdek, L\'angi and Nasz\'odi \cite{BLN:2026}, Martini, Montejano and Oliveros \cite{MMO:2019},
as well as \cite{MM:2022}*{Sec.~2}.  Marynych and Molchanov \cite{MM:2022} work
in the wider setting of the $M$-hull, in which the balls are replaced by
translates of a fixed convex body $M$, a point of view also treated by L\'angi,
Nasz\'odi and Talata \cite{LNT:2013}. Several classical functionals have been
transferred to the ball-convex setting recently: affine surface areas, by
Sch\"utt, Werner and Yalikun \cite{SWY:2025} via floating bodies and by
Artstein-Avidan and Chor \cite{AAC:2025} in the form of a $c$-affine surface
area maximised by the ball; reverse isoperimetric inequalities for
$\lambda$-convex bodies by Drach and Tatarko \cites{DT:2023,DT:2026}, confirming conjectures
of Borisenko on the extremality of the $\lambda$-convex lens and of Bezdek on
the inradius of isoperimetric ball-polyhedra; and randomised isoperimetric
inequalities for the intrinsic volumes of ball-polyhedra, by Paouris and
Pivovarov \cite{PP:2017}.

So far, random ball-convex hulls have been investigated mainly in the asymptotic
regime $n\to\infty$.  In the plane, Fodor, Kevei and V\'igh \cite{FKV:2014}
determined the asymptotic behaviour of the expected number of vertices and of the
expected missed area of random disc-polygons in smooth convex discs, with
variance estimates given by Fodor and V\'igh \cite{FV:2018} and more general
generating bodies treated by Fodor, Papv\'ari and V\'igh \cite{FPV:2020}.  In
higher dimensions, Fodor \cite{Fodor:2020} studied the expected number of
facets of random ball-polytopes in bodies that slide freely in a ball, and
observed that for the unit ball approximated by unit-radius ball-polytopes this
expectation stays bounded as $n\to\infty$.  Marynych and Molchanov
\cite{MM:2022} obtained the limiting distribution of the whole $f$-vector of
the $K$-hull of a uniform sample, without normalisation, and Kabluchko,
Marynych and Molchanov \cite{KMM:2025} extended this to $(K,\HH)$-hulls, in which
rigid motions are replaced by an arbitrary family $\HH$ of invertible affine
maps. By contrast, the present paper concerns the exact distribution of $f_0$
for a fixed small number of points.

\subsection{Efron--Buchta identities and the Sylvester probabilities of the disc}
We fix $0<r\leq R$ and assume that $K\subset \RR^d$ is an $r$-ball-convex body. Since every $r$-ball-convex body is $R$-ball-convex for every $R\geq r$, the $R$-ball-convex hull
\begin{equation*}
    [K]_n^R := \conv_R\{\bX_1,\dotsc,\bX_n\}
\end{equation*}
of $n$ independent uniform random points $\bX_1,\dotsc,\bX_n$ in $K$ satisfies $[K]_n^R\subset K$. We denote by $f_0([K]_n^R)$ the number of vertices of $[K]_n^R$, and we define the Sylvester numbers of the $R$-ball-convex hull by
\begin{equation*}
    p_{n,k}^R(K) := \PP(f_0([K]_n^R)=k)
\end{equation*}
for integers $k$ and $n$ such that $2\leq k\leq n$.

In Section~\ref{sec:Efron-Buchta}, we show that the Efron--Buchta identities \cite{Buchta:2005} also hold for the $R$-ball-convex hull.

\begin{theorem}[Efron--Buchta identities for the $R$-ball-convex hull]\label{thm:R-buchta}
    Let $0<r\leq R$ and let $K\subset\RR^d$ be an $r$-ball-convex body. Then
    for every $n\geq2$,
    \begin{equation}\label{eqn:R-buchta-forward}
        \sum_{k=2}^{n} \binom{n-k}{n-i}\, p_{n,k}^R(K)
        = \binom{n}{i} \frac{\EE\vol_d([K]_{i}^R)^{n-i}}{\vol_d(K)^{n-i}}
        \qquad \text{for $i=2,\dotsc,n$.}
    \end{equation}
    Consequently, for $2\leq k\leq n$,
    \begin{equation}\label{eqn:efron-buchta}
        p_{n,k}^R(K) = (-1)^k \binom{n}{k} \sum_{i=2}^k (-1)^i \binom{k}{i}
        \frac{\EE\vol_d([K]_i^R)^{n-i}}{\vol_d(K)^{n-i}}.
    \end{equation}
\end{theorem}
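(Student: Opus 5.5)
We follow Buchta's double-counting argument, replacing convex hulls by $R$-ball-convex hulls. Fix $n\ge 2$ and $2\le i\le n$, and let $\bX_1,\dotsc,\bX_n$ be i.i.d.\ uniform in $K$. For each $i$-subset $I\subset\{1,\dotsc,n\}$, consider the event $E_I$ that every point $\bX_j$ with $j\notin I$ lies in $\conv_R\{\bX_l: l\in I\}$.

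We will compute $\sum_I\PP(E_I)$ in two ways. First, conditioning on $(\bX_l)_{l\in I}$ and using independence of the remaining $n-i$ points together with $[K]_i^R\subset K$ (valid because $K$ is $r$-ball-convex, hence $R$-ball-convex), we get
\begin{equation*}
\PP(E_I)=\EE\vol_d([K]_i^R)^{n-i}/\vol_d(K)^{n-i}.
\end{equation*}
Summing over the $\binom{n}{i}$ subsets gives the right-hand side of \eqref{eqn:R-buchta-forward}.

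Second, we count, for each outcome, the number of $I$ for which $E_I$ holds. The key combinatorial claim is that, almost surely, $E_I$ holds exactly when $I$ contains the index set $V$ of the vertices of $[K]_n^R$, where a vertex means an extreme point of $\conv_R$ belonging to the sample. Given the claim, on $\{f_0([K]_n^R)=k\}$ the number of valid $I$ is the number of ways to choose $i-k$ further indices from the $n-k$ non-vertices, namely $\binom{n-k}{i-k}=\binom{n-k}{n-i}$. Taking expectations yields the left-hand side of \eqref{eqn:R-buchta-forward}.

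The main obstacle is justifying the claim for the $R$-hull; this needs three properties of $\conv_R$.
\begin{enumerate}
\item Monotonicity and idempotence: since $\conv_R$ is a hull operator (an intersection of balls), $V\subset I$ gives $\conv_R\{\bX_l:l\in I\}\supset\conv_R\{\bX_v:v\in V\}$. So we need $\conv_R(V)=\conv_R\{\bX_1,\dotsc,\bX_n\}$, i.e.\ a Krein--Milman-type statement that a finite set's $R$-hull is the $R$-hull of its $R$-extreme points. For this I would use the known structure of ball-polytopes: $\conv_R$ of a finite set is the intersection of the finitely many radius-$R$ balls through supporting configurations, its boundary is made of spherical pieces, and a sample point not in $\conv_R$ of the others is a vertex. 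So define vertices as the points $\bX_j\notin\conv_R\{\bX_l:l\ne j\}$.
\item Conversely, if $v\in V\setminus I$, then $\bX_v\notin\conv_R\{\bX_l: l\ne v\}\supset\conv_R\{\bX_l:l\in I\}$, so $E_I$ fails.
\item The iterative removal must stabilise to $V$: removing non-vertices one at a time does not change the hull, again by idempotence.
\end{enumerate}
Almost-sure genericity (no point on the boundary of the hull of others, which is a null event since boundaries of ball-polytopes have measure zero) handles ties. One subtlety is that $[K]_i^R$ must be defined even when $i<d+1$: for $i=2$ it is a spindle with positive volume, which is why the sums start at $2$.

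Finally, \eqref{eqn:efron-buchta} follows by binomial inversion. Write $a_i=\binom{n}{i}\EE\vol_d([K]_i^R)^{n-i}/\vol_d(K)^{n-i}$, so that $a_i=\sum_k\binom{n-k}{i-k}p_{n,k}^R$. Note $p_{n,k}^R=0$ for $k<2$, since there are at least two vertices when $n\ge2$ (the two farthest points are $R$-extreme). The system is triangular in $k\le i$, so it inverts to
\begin{equation*}
p_{n,k}^R=\sum_{i\le k}(-1)^{k-i}\binom{n-i}{k-i}a_i.
\end{equation*}
Using $\binom{n-i}{k-i}\binom{n}{i}=\binom{n}{k}\binom{k}{i}$ gives the stated formula.
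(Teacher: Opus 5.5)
Your overall structure is the paper's, and that part is fine: Fubini for a fixed index set, counting the $i$-sets $I$ that contain the vertex set $V$ (equivalent to the paper's exchangeability step), and the binomial inversion. The gap is in the one non-formal step, the direction $V\subset I\Rightarrow E_I$, which needs $\conv_R\{\bX_v:v\in V\}=\conv_R\{\bX_1,\dotsc,\bX_n\}$.

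Your third item does not prove this. Idempotence only shows that deleting a \emph{single} absorbed point leaves the hull unchanged. It does not show that the other originally absorbed points stay absorbed in the smaller set, and this is not a consequence of the hull-operator axioms. Take $A=\{(\pm R,0),(0,\pm R)\}$. Then $C_R(A)=\{\mathbf{0}\}$, $\conv_R(A)=B^2(\mathbf{0},R)$, and each point is absorbed by the other three. But after deleting $(0,R)$, the point $(R,0)$ is no longer absorbed: the spindle of $(-R,0),(0,-R)$ lies in $B^2((-R,-R),R)$, which misses it. Here the set of $R$-extremal points is empty, so $\conv_R$ of it is not $\conv_R(A)$.

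What you actually need is the simultaneous-deletion statement, Lemma~\ref{lem:R-minkowski}, together with its hypothesis $\inte C_R(A)\neq\emptyset$. Dually, $\bx$ is absorbed iff $B^d(\bx,R)$ is redundant in $C_R(A)=\bigcap_{\ba\in A}B^d(\ba,R)$. When $C_R(A)$ has non-empty interior, a ball is irredundant exactly when its sphere carries a $(d-1)$-dimensional piece of $\partial C_R(A)$. Distinct points give distinct spheres, so $C_R(A)$ is the intersection of the irredundant balls alone, and all redundant ones can be dropped at once. You also have to verify the hypothesis almost surely, which you do not: $K\subset B^d(\bc,r)$ and a.s.\ every $\bX_j\in\inte K$, hence $\bc\in\inte C_R(A)$. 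Your ``boundaries have measure zero'' genericity is a different condition and does not supply this.

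A smaller error: your reason for $f_0\geq 2$ is false, because diametral points need not be $R$-extremal. Take an equilateral triangle $\ba_1,\ba_2,\ba_3$ of side $1$, with circumradius $\approx0.577<R=0.6$. Add $\bx$ just inside the arc of $\conv_R\{\ba_1,\ba_2,\ba_3\}$ over the side opposite $\ba_1$. Then $\|\bx-\ba_1\|\approx1.13>1$, so $\{\ba_1,\bx\}$ is the unique diametral pair, yet $\bx$ is absorbed. The correct argument again uses Lemma~\ref{lem:R-minkowski}: $\conv_R$ of at most one point contains at most one point, while $\conv_R(A)\supset A$ contains at least two.
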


Theorem~\ref{thm:R-buchta} shows that the normalised moments determine the distribution of $f_0([K]_n^R)$, and vice versa.
Note that in \eqref{eqn:efron-buchta} the sum starts at $i=2$ instead of $i=d+1$ as in the classical case. The reason is that the classical convex hull of at most $d$ points is almost surely a lower-dimensional simplex, whereas the $R$-ball-convex hull of two distinct points in an $r$-ball-convex body $K$ is almost surely an $R$-ball-segment, and hence a $d$-dimensional convex body. Therefore, the expected volume $\EE\vol_d([K]_2^R)$ is strictly positive and contributes to the sum in \eqref{eqn:efron-buchta}.

Moreover, for $r\leq R\leq R'$ we have $[K]_n^R\supset[K]_n^{R'}$, because a ball of
radius $R$ is an intersection of balls of radius $R'$. Consequently, every $R$-extremal point of
$\{\bX_1,\dotsc,\bX_n\}$ (see the precise definition in Section~\ref{sec:background}) is
also $R'$-extremal, so $f_0([K]_n^R)\leq f_0([K]_n^{R'})$.  In fact
$f_0([K]_n^R)\to f_0([K]_n)$ almost surely as $R\to\infty$, and $p_{n,n}^R(K)$
increases monotonically to $p_{n,n}(K)$.

\medskip
Our next main result is the exact distribution of $f_0([K]_n^R)$ for $n=3,4$
and arbitrary planar $r$-ball-convex body $K$.  Let $d=2$, and fix $0<r\leq R$.  For two independent uniform random points
$\bX,\bY$ of $K$, write
\begin{equation*}
    L_K:=\tfrac12\|\bX-\bY\|
\end{equation*}
for the random half-distance between them.
Since $K$ is $R$-ball-convex, the $R$-ball-convex hull
$\conv_R\{\bX,\bY\}$ lies in $K$ and is a congruent copy of the
standard \emph{$R$-ball-segment} $\mathrm{Sg}_R(L_K)$, or \emph{$R$-segment} for
short, the lens-shaped intersection of the two
discs of radius $R$ through $\bX$ and $\bY$, of area
\begin{equation}\label{eqn:segment-2d}
    V_R(\ell):=\vol_2\bigl(\mathrm{Sg}_R(\ell)\bigr)
    =2R^2\arcsin\frac{\ell}{R}-2\ell\sqrt{R^2-\ell^2},
    \qquad 0\leq\ell\leq R.
\end{equation}
The two centres are at distance $\sqrt{R^2-\ell^2}$ from the midpoint of
$\conv\{\bX,\bY\}$. We set
\begin{equation}\label{eqn:def-Delta-R}
    \Delta_R(\ell):=\ell\sqrt{R^2-\ell^2},
    \qquad 0\leq\ell\leq R,
\end{equation}
for the area of the triangle spanned by the two points $\bX, \bY$ and either centre,
see Figure~\ref{fig:segment}.

\begin{figure}[t]
    \centering
    \begin{tikzpicture}[scale=1.05]
        \pgfmathsetmacro{\Rr}{3}
        \pgfmathsetmacro{\gam}{40}
        \pgfmathsetmacro{\El}{\Rr*sin(\gam)}
        \pgfmathsetmacro{\Hh}{\Rr*cos(\gam)}

        \coordinate (X)  at (-\El,0);
        \coordinate (Y)  at ( \El,0);
        \coordinate (cm) at (0,-\Hh);
        \coordinate (cp) at (0, \Hh);

        %\draw[dotted] (Y) --++ (cp);
        \draw[latex-latex] (cp) --++ (Y) node[midway,above] {$\ell$};

        %\draw[dotted] (cp) --++ (Y);
        \draw[latex-latex] (Y) --++ (cp) node[midway,right] {$\sqrt{R^2-\ell^2}$};

        % the triangle spanned by X, Y and the upper centre
        \fill[orange!22] (X) -- (Y) -- (cp) -- cycle;

        % the ball-segment
        \fill[blue!25,fill opacity=0.75]
        (Y) arc[start angle=90-\gam,  end angle=90+\gam,  radius=\Rr]
        arc[start angle=270-\gam, end angle=270+\gam, radius=\Rr] -- cycle;

        % chord and axis
        \draw[thin,gray] (X) -- (Y);
        \draw[thin,gray] (cm) -- (cp);

        % hints of the two bounding circles
        \draw[gray!55,dashed,thin]
        ([shift=({70-\gam}:\Rr)]cm) arc[start angle=70-\gam, end angle=110+\gam, radius=\Rr];
        \draw[gray!55,dashed,thin]
        ([shift=({250-\gam}:\Rr)]cp) arc[start angle=250-\gam, end angle=290+\gam, radius=\Rr];

        % boundary of the ball-segment
        \draw[thick]
        (Y) arc[start angle=90-\gam,  end angle=90+\gam,  radius=\Rr]
        arc[start angle=270-\gam, end angle=270+\gam, radius=\Rr] -- cycle;

        % the kite
        \draw[thin,dashed] (X) -- (cp) -- (Y);
        \draw[thin,dashed] (X) -- (cm) -- (Y);

        % right angle at the midpoint
        \draw[thin,gray] (0.25,0) -- (0.25,0.25) -- (0,0.25);

        % the angle gamma at the lower centre
        \draw[thin] ([shift=({-90+\gam}:0.9)]cp) arc[start angle=-90+\gam, end angle=-90, radius=0.9];
        \node at ([shift=({-90+\gam/2}:0.7)]cp) {$\gamma$};

        % points and labels
        \fill (X) circle (1.6pt) node[left]  {$\bx$};
        \fill (Y) circle (1.6pt) node[right] {$\by$};
        \fill (cm) circle (1.6pt) node[below] {$\bc_-$};
        \fill (cp) circle (1.6pt) node[above] {$\bc_+$};
        %\fill (0,0) circle (1.2pt);

        %\node[below,inner sep=2pt] at (\El/2,-0.04) {$\ell$};
        %\node[rotate=90,anchor=south,inner sep=3pt] at (0,0.62*\Hh) {$\sqrt{R^2-\ell^2}$};
        \node[anchor=south west,inner sep=1pt] at ($(cp)!0.5!(Y)+(0.03,0)$) {$R$};

        % callouts
        %\node at (0.5,1.0) {$\Delta_R(\ell)$};
        %\draw[thin,-latex] (2.5,1.72) -- (0.95,0.95);
        \node at (-0.5,-0.3) {$\mathrm{Sg}_R(\ell)$};
        %\draw[thin,-latex] (-2.5,1.55) -- (-0.8,0.32);

        \draw (\El,\Hh) ++ (-0.1,-0.1) --++ (0.2,0.2);
        \draw (\El,\Hh) ++ (0.1,-0.1) --++ (-0.2,0.2);

    \end{tikzpicture}
    \caption{The $R$-segment $\mathrm{Sg}_R(\ell)=\conv_R\{\bx,\by\}$ of two points
        at half-distance $\ell$ and the triangle $\conv\{\bx,\by,\bc_+\}$.
    }
    \label{fig:segment}
\end{figure}

By the Efron--Buchta identities, the distribution of $f_0([K]_3^R)$ is
\begin{equation}\label{eqn:pnk3}
    \begin{aligned}
        p_{3,2}^R(K) & = 3 \frac{\EE \vol_2([K]_2^R)}{\vol_2(K)} = \frac{3}{\vol_2(K)} \EE V_R(L_K), \\
        p_{3,3}^R(K) & = 1- p_{3,2}^R(K)= 1- \frac{3}{\vol_2(K)} \EE V_R(L_K),                       \\
    \end{aligned}
\end{equation}
and  the distribution of $f_0([K]_4^R)$ is
\begin{equation}\label{eqn:pnk}
    \begin{aligned}
        p_{4,2}^R(K) & = 6 \frac{\EE \vol_2([K]_2^R)^2}{\vol_2(K)^2} = \frac{6}{\vol_2(K)^2} \EE V_R(L_K)^2, \\
        p_{4,3}^R(K) & = 4 \frac{\EE \vol_2([K]_3^R)}{\vol_2(K)} - \frac{12}{\vol_2(K)^2} \EE V_R(L_K)^2,    \\
        p_{4,4}^R(K) & = 1-p_{4,2}^R(K)-p_{4,3}^R(K).
    \end{aligned}
\end{equation}
Hence, the law of $L_K$ together with $\EE\vol_2([K]_3^R)$ completely determines the distribution.
It turns out that $\EE\vol_2([K]_3^R)$ can be expressed in terms of the moments of functions of $L_K$ and the classical Sylvester functional $\EE\vol_2([K]_3)$.

\begin{theorem}[Sylvester probabilities for the $R$-ball-convex hull in the plane]\label{thm:sylvester-general}
    Let $0<r\leq R$, let $K\subset\RR^2$ be an $r$-ball-convex body.
    Then
    \begin{align}\notag
        \EE\vol_2\bigl([K]_3^R\bigr)
         & = \EE\vol_2\bigl([K]_3\bigr) + \frac32\,\EE V_R(L_K)     \\ \label{eqn:intro-vol3}
         & \quad + \frac{1}{\vol_2(K)}\Bigl(\frac94\,\EE V_R(L_K)^2
        + 12\,\EE\bigl[\Delta_R(L_K)V_R(L_K)\bigr] - 16\,\EE L_K^4\Bigr),
    \end{align}
    and therefore
    \begin{align*}
        p_{4,3}^R(K) & = \frac{4}{\vol_2(K)}\,\EE\vol_2\bigl([K]_3\bigr) + \frac{6}{\vol_2(K)}\,\EE V_R(L_K) \\
                     & \qquad + \frac{1}{\vol_2(K)^2}\Bigl(48\,\EE\bigl[\Delta_R(L_K)V_R(L_K)\bigr]
        - 3\,\EE V_R(L_K)^2 - 64\,\EE L_K^4\Bigr).
    \end{align*}
\end{theorem}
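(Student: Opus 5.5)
We would prove \eqref{eqn:intro-vol3} by a pointwise decomposition of $\vol_2([K]_3^R)$ into a classical triangle term, three half-lens terms, and a correction supported on the event that one of the three points fails to be $R$-extremal. The formula for $p_{4,3}^R(K)$ then follows by substituting \eqref{eqn:intro-vol3} into \eqref{eqn:pnk}, which is pure bookkeeping: $4\cdot\frac94-12=-3$ and $4\cdot 12=48$.

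\emph{Step 1: pointwise geometry.} Write $\ell_{ij}=\frac12\|\bX_i-\bX_j\|$ and $V_{ij}=V_R(\ell_{ij})$, and let $T$ be the area of $\conv\{\bX_1,\bX_2,\bX_3\}$. We would check the following two cases.
\begin{itemize}
\item If all three points are $R$-extremal, then $\conv_R\{\bX_1,\bX_2,\bX_3\}$ is the triangle with three outward circular caps of radius $R$ attached along its sides. The cap over side $ij$ is exactly half of $\mathrm{Sg}_R(\ell_{ij})$, so
\begin{equation*}
\vol_2=T+\tfrac12(V_{12}+V_{13}+V_{23}).
\end{equation*}
\item Otherwise, up to relabelling, $\bX_3\in\conv_R\{\bX_1,\bX_2\}$, and then $\vol_2=V_{12}$.
\end{itemize}
The point needing care is that in the first case the caps really bulge outward and do not overlap the triangle. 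We would verify this from the description of $R$-extremality in Section~\ref{sec:background}: a point is $R$-extremal exactly when it lies outside the $R$-segment spanned by the other two. These cases have disjoint events up to null sets, since at most one point can lie in the segment of the other two. Hence
\begin{equation*}
\vol_2([K]_3^R)=T+\tfrac12\textstyle\sum V_{ij}+\sum_{\text{3 choices}}\bone\{\bX_3\in \mathrm{Sg}\}\bigl(\tfrac12V_{12}-T-\tfrac12V_{13}-\tfrac12V_{23}\bigr).
\end{equation*}

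\emph{Step 2: take expectations and condition on $\bX_1,\bX_2$.} Since $K$ is $R$-ball-convex, the lens $\mathrm{Sg}=\conv_R\{\bX_1,\bX_2\}$ lies inside $K$. Conditionally on $\bX_1,\bX_2$, the point $\bX_3$ is therefore uniform on $K$ and hits the lens with density $1/\vol_2(K)$. This gives the terms $\EE\vol_2([K]_3)+\frac32\EE V_R(L_K)$, plus $\frac{3}{\vol_2(K)}\EE\, I(L_K)$, where
\begin{equation*}
I(\ell)=\int_{\mathrm{Sg}_R(\ell)}\Bigl(\tfrac12V_R(\ell)-\ell|y|-\tfrac12V_R(\tfrac12|\bx-\bx_1|)-\tfrac12V_R(\tfrac12|\bx-\bx_2|)\Bigr)\,\dint\bx .
\end{equation*}
Here we use coordinates with the chord on the $x$-axis and $\bx=(x,y)$, so the triangle area is $\ell|y|$. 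The crucial observation is that $I(\ell)$ depends only on $\ell$ and $R$, not on $K$. Matching with \eqref{eqn:intro-vol3}, it remains to show
\begin{equation*}
I(\ell)=\tfrac34V_R(\ell)^2+4\Delta_R(\ell)V_R(\ell)-\tfrac{16}{3}\ell^4.
\end{equation*}

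\emph{Step 3: the integral identity, which is the main obstacle.} The first term of $I(\ell)$ contributes $\frac12V_R(\ell)^2$. The triangle term $\int_{\mathrm{Sg}}\ell|y|$ is elementary: integrate over the two circular caps, writing $|y|$ as the height above the chord.

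The hard part is $\int_{\mathrm{Sg}}V_R(\frac12|\bx-\bx_1|)\,\dint\bx$, and by symmetry the same for $\bx_2$. We would first try a Fubini exchange:
\begin{equation*}
\int_{\mathrm{Sg}(\bx_1,\bx_2)}\vol_2\bigl(\mathrm{Sg}(\bx_1,\bx)\bigr)\,\dint\bx=\int\!\!\int\bone\{\by\in \mathrm{Sg}(\bx_1,\bx),\ \bx\in\mathrm{Sg}(\bx_1,\bx_2)\}\,\dint\by\,\dint\bx .
\end{equation*}
We would then use the nesting $\mathrm{Sg}(\bx_1,\bx)\subset \mathrm{Sg}(\bx_1,\bx_2)$ for $\bx$ in the lens, together with a description of the set of $\bx$ whose segment with $\bx_1$ contains a given $\by$, hoping to reduce everything to areas of lenses and kites.

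If that fails, we would fall back on polar coordinates centred at $\bx_1$: write $\bx=\bx_1+\rho(\cos\phi,\sin\phi)$, with $\rho$ running up to the boundary arc $\rho_{\max}(\phi)=2R\sin(\gamma-|\phi|)$-type expressions. Then $V_R(\rho/2)$ is integrated first in $\rho$ and then in $\phi$. We expect the products $\Delta_R V_R$ and the polynomial $\ell^4$ to arise from the boundary terms of these integrations, via the arcsine in \eqref{eqn:segment-2d}.

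As a check on constants, we would verify the identity in the limit $R\to\infty$. There both sides should behave consistently with $p_{3,2}^R\to0$, and also at $\ell=R$.
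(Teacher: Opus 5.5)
Your proposal is correct and is essentially the paper's proof. The paper writes the same pointwise decomposition, with the correction term supported on $\{\varrho([K]_3)>R\}$, which equals your union of absorption events up to a null set. It then conditions on two points and reduces everything to $W_R(\ell)=\int_{\conv_R\{\bp,\bq\}}\operatorname{dist}(\bx,\operatorname{aff}\{\bp,\bq\})\,\dint\bx$ and $\Psi_R(\ell)=\int_{\conv_R\{\bp,\bq\}}V_R(\tfrac12\|\bx-\bp\|)\,\dint\bx$; your $I(\ell)=\tfrac12V_R(\ell)^2-\ell W_R(\ell)-\Psi_R(\ell)$, so your target identity is exactly $-\tfrac12$ times \eqref{eqn:key-identity}. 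Your fallback of polar coordinates at $\bx_1$ with radial extent $2R\sin(\gamma-|\phi|)$ is precisely how the paper evaluates $\Psi_R$ (followed by the substitution $t=2R\sin\nu$), with $W_R$ obtained by horizontal slicing, so the Fubini/nesting detour is unnecessary.
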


Theorem~\ref{thm:sylvester-general} identifies the dependence on the body $K$ in
two ingredients of very different character: the law of the half-distance 
$L_K$, which contains the two-point dependent quantities $\EE V_R(L_K)$, $\EE V_R(L_K)^2$,
$\EE[\Delta_R(L_K)V_R(L_K)]$ and $\EE L_K^4$, and the classical Sylvester
functional $\EE\vol_2([K]_3)$, which enters only the leading term of
$p_{4,3}^R$.  The radius $R$ enters only through
$V_R$ and $\Delta_R$.  Letting $R\to\infty$ one has $V_R(\ell)\to0$ and
$\Delta_R(\ell)V_R(\ell)\to\tfrac43 \ell^4$, so that the last two terms of
\eqref{eqn:intro-vol3} cancel and \eqref{eqn:intro-vol3} recovers
$\EE\vol_2([K]_3)$.

\medskip
For the disc all of the functionals in \eqref{eqn:pnk} are explicit by Theorem \ref{thm:sylvester-general}, because the law of $L_{B^2}$
is classical and has an explicit density function, see \eqref{eqn:dist-density-2d}, while
$\EE\vol_2([B^2]_3)=\frac{35}{48\pi}$ is Sylvester's
classical value. Thus, everything involving $L_{B^2}$ reduces to
one-dimensional integrals, which we evaluate in closed form in
Section~\ref{sec:moments} and explicitly for $R=1$.

\begin{theorem}[Sylvester probabilities for the disc $B^2$]\label{thm:sylvester-r-convex}
    Let $R\geq 1$.
    Then the
    distributions of $f_0([B^2]_3^R)$ and $f_0([B^2]_4^R)$ are determined by the
    three moments
    \begin{equation*}
        m_1(R) = \EE V_R(L_{B^2}),\qquad
        m_2(R) = \EE V_R(L_{B^2})^2,\qquad
        \mu(R) = \EE\bigl[\Delta_R(L_{B^2})\,V_R(L_{B^2})\bigr].
    \end{equation*}
    In particular, at the smallest admissible radius $R=1$, we have
    \begin{align*}
        \bigl(p_{3,2}^1(B^2),\;p_{3,3}^1(B^2)\bigr)
         & = \Bigl(\frac{32}{\pi^2}-3,\; 4-\frac{32}{\pi^2}\Bigr)
        \approx ( 24.23\%,\; 75.77\%),                                              \\
        \bigl(p_{4,2}^1(B^2),\;p_{4,3}^1(B^2),\;p_{4,4}^1(B^2)\bigr)
         & =\Bigl(\frac{81}{2\pi^2}-4,\;\frac{6}{\pi^2},\;5-\frac{93}{2\pi^2}\Bigr) \\
         & \approx (10.35\%,\;60.79\%,\;28.86\%),
    \end{align*}
    while as $R\to\infty$,
    \begin{equation}\label{eqn:intro-p44-rate}
        p_{4,4}^R(B^2) = p_{4,4}(B^2)-\frac{2048}{525\pi^2R}+O\!\bigl(R^{-2}\bigr),
        \quad p_{4,4}(B^2)=1-\frac{35}{12\pi^2}\approx 70.45\% .
    \end{equation}
\end{theorem}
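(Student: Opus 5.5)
Theorem~\ref{thm:sylvester-r-convex} follows from Theorem~\ref{thm:sylvester-general}, \eqref{eqn:pnk3} and \eqref{eqn:pnk}, once the one-dimensional integrals against the law of $L_{B^2}$ are evaluated. The plan has four steps.

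\emph{Step 1: reduce to $m_1,m_2,\mu$.} Put $K=B^2$, so $\vol_2(K)=\pi$, and use Sylvester's value $\EE\vol_2([B^2]_3)=\frac{35}{48\pi}$. The only other two-point quantity is $\EE L_{B^2}^4=\frac1{16}\EE\|\bX-\bY\|^4$. Expanding $\|\bX-\bY\|^4$ and using $\EE\|\bX\|^2=\frac12$, $\EE\|\bX\|^4=\frac13$ and rotational symmetry gives $\EE\|\bX-\bY\|^4=2\cdot\frac13+4\cdot\frac14+2\cdot\frac14=\frac{13}{6}$. Hence $\EE L^4=\frac{13}{96}$, a constant independent of $R$. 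It follows that all five probabilities are affine combinations of $m_1,m_2,\mu$, which proves the first assertion.

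\emph{Step 2: evaluate at $R=1$.} Write $\ell=\sin\gamma$, so that $V_1=2\gamma-\sin2\gamma$ and $\Delta_1=\frac12\sin2\gamma$. The density of $L=\frac12\|\bX-\bY\|$ on $[0,1]$ is $g(\ell)=\frac{16\ell}{\pi}\bigl(\arccos\ell-\ell\sqrt{1-\ell^2}\bigr)$, which is \eqref{eqn:dist-density-2d} after rescaling. In the variable $\gamma$ this becomes $\frac{16}{\pi}\sin\gamma\,(\frac\pi2-\gamma-\frac12\sin2\gamma)\cos\gamma\,\dint\gamma$ on $[0,\frac\pi2]$. Every integrand is then a polynomial in $\gamma$ times trigonometric polynomials, so $m_1,m_2,\mu$ reduce to elementary integrals that produce rationals and powers of $\pi$. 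Substitute these into \eqref{eqn:pnk3} and \eqref{eqn:pnk}, together with the formula for $p_{4,3}^R$ from Theorem~\ref{thm:sylvester-general}, and simplify. This should give $p_{3,2}^1=\frac{32}{\pi^2}-3$, i.e.\ $m_1=\frac{32}{3\pi}-\pi$, and similarly for $m_2$ and $\mu$.

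\emph{Step 3: asymptotics as $R\to\infty$.} Expand in $\ell/R$ for fixed $\ell\le1$:
\[
V_R(\ell)=\frac{4\ell^3}{3R}+\frac{2\ell^5}{5R^3}+\dotsb,
\]
and $\Delta_R(\ell)V_R(\ell)=\frac43\ell^4+c\,\ell^6R^{-2}+\dotsb$. Only $m_1$ contributes at order $R^{-1}$: $m_2=O(R^{-2})$, and $\mu-\frac43\EE L^4=O(R^{-2})$, so the $-64\EE L^4$ term cancels against $48\mu$ up to $O(R^{-2})$. From \eqref{eqn:pnk}, $p_{4,4}^R=1-4\EE\vol_2([B^2]_3^R)/\pi+O(R^{-2})$. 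Using \eqref{eqn:intro-vol3},
\[
\EE\vol_2([B^2]_3^R)=\frac{35}{48\pi}+\frac32 m_1+O(R^{-2}),
\]
so $p_{4,4}^R=p_{4,4}-\frac{6}{\pi}m_1+O(R^{-2})$ with $m_1=\frac{4}{3R}\EE L^3+O(R^{-3})$. Computing $\EE L^3=\frac18\EE\|\bX-\bY\|^3=\frac18\cdot\frac{512}{175\pi}$ from the standard disc-distance moment gives $\frac{6}{\pi}\cdot\frac{4}{3R}\cdot\frac{64}{175\pi}=\frac{512}{175\pi^2R}$.

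\emph{Step 4: check the constant.} This value does not match the stated $\frac{2048}{525\pi^2}$; the ratio is exactly $\frac43$. So I would recheck the coefficient of the $R^{-1}$ term, specifically whether the $\frac1{\vol_2(K)}(\dots)$ terms in \eqref{eqn:intro-vol3} contribute at order $R^{-1}$. Steps 3 and 4 are the main risk: getting the asymptotic bookkeeping right, including uniformity of the expansion up to $\ell=1$, which is harmless since $R\ge1$ and everything is analytic. The rest is routine integration, which I would verify with the substitution $\ell=\sin\gamma$.
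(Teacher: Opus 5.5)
Your plan is the paper's own proof: specialise Theorem~\ref{thm:sylvester-general} to $K=B^2$ (this is \eqref{eqn:vol3-explicit}), evaluate $m_1,m_2,\mu$ at $R=1$ by elementary trigonometric integrals, and expand in $1/R$. Your order-$R^{-1}$ bookkeeping in Step~3 is also right: $m_2=O(R^{-2})$ and $48\mu-64\,\EE L_{B^2}^4=O(R^{-2})$, so only $\frac6\pi m_1$ survives. The argument breaks because of arithmetic errors in two places. The mismatch you found in Step~4 comes from the first of them, not from the statement. By \eqref{eqn:L-moments} with $d=2$, $k=3$, $\EE L_{B^2}^3=\frac{16}{5\pi}\,\ibeta(3,\tfrac32)=\frac{16}{5\pi}\cdot\frac{16}{105}=\frac{256}{525\pi}$. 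Hence $\EE\|\bX-\bY\|^3=\frac{2048}{525\pi}$, which is $\frac43$ times your $\frac{512}{175\pi}$. Your value even violates $(\EE\|\bX-\bY\|^3)^{1/3}\geq(\EE\|\bX-\bY\|^2)^{1/2}=1$. With the correct moment, $\frac6\pi\cdot\frac4{3R}\cdot\frac{256}{525\pi}=\frac{2048}{525\pi^2R}$, which is exactly the stated constant. So there is nothing to recheck in the $\frac1{\vol_2(K)}(\dots)$ terms of \eqref{eqn:intro-vol3}.

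The second error is $\EE L_{B^2}^4=\frac5{48}$, not $\frac{13}{96}$. You used $\EE\langle\bX,\bY\rangle^2=\frac14$, whereas $\EE\langle\bX,\bY\rangle^2=\EE\|\bX\|^2\,\EE\|\bY\|^2\,\EE\cos^2\theta=\frac18$. Therefore $\EE\|\bX-\bY\|^4=\frac23+\frac12+\frac12=\frac53$, in agreement with \eqref{eqn:L-moments}. This slip stays hidden in Step~3, where you only use $\mu-\frac43\EE L_{B^2}^4=O(R^{-2})$ abstractly, but it matters at $R=1$. The $R$-independent term $\frac4\pi\cdot\frac{35}{48\pi}-\frac{64}{\pi^2}\EE L_{B^2}^4$ of $p_{4,3}^R(B^2)$ would become $-\frac{23}{4\pi^2}$ instead of $-\frac{15}{4\pi^2}$. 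Step~2 would then output $p_{4,3}^1=\frac4{\pi^2}$ and $p_{4,4}^1=5-\frac{89}{2\pi^2}$ rather than the stated values.

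Step~2 is also only announced, not carried out. To complete it you need $m_1(1)=\frac{32}{3\pi}-\pi$, $m_2(1)=\frac{81-8\pi^2}{12}$ and $\mu(1)=\frac{2\pi^2-17}{24}$. The substitution $\ell=\sin\gamma$ turns each into an elementary integral. With $\EE L_{B^2}^4=\frac5{48}$, these values yield exactly the stated probabilities.
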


We note that none of the three moments is elementary for $R>1$. However, each of them
evaluates in terms of $\artanh\frac1R$ and dilogarithms of $\pm\frac1R$, so
that all probabilities of Theorem~\ref{thm:sylvester-r-convex} admit a closed
form.  This is Proposition~\ref{prop:dilog}. The
elementary values at $R=1$ come from the evaluations
$\Li_2(1)=\frac{\pi^2}{6}$ and $\Li_2(-1)=-\frac{\pi^2}{12}$.

\subsection{A Groemer-type inequality and the extremiser of the Sylvester probabilities}

In Subsection~\ref{sec:groemer}, we use Steiner symmetrisation on $r$-ball-convex bodies
as recently investigated by Artstein-Avidan and Florentin \cite{AAF:2026}. Their tools allow us to derive the following extension of Groemer's inequality.

\begin{theorem}[Groemer-type inequality for the ball-convex hull in the plane]\label{thm:R-groemer}
    Let $K\subset \RR^2$ be an $r$-ball-convex body and let $R\geq r$. For $n\geq 2$ and $k\geq 1$,
    \begin{equation*}
        \EE\vol_2([K]_n^R)^k \geq \EE\vol_2([r_KB^2]_n^R)^k,
    \end{equation*}
    where $r_K:=\sqrt{\frac{1}{\pi}\vol_2(K)}$. Equality holds if and only if $K$ is a disc.
\end{theorem}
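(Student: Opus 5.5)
The plan is to follow Groemer's classical argument by Steiner symmetrisation, with the ball-convex Steiner symmetrisation of Artstein-Avidan and Florentin \cite{AAF:2026} in place of the classical one. Their results give three facts, which I take as available:
\begin{itemize}
\item the Steiner symmetral $S_u K$ of an $r$-ball-convex body in direction $u$ is again $r$-ball-convex, hence $R$-ball-convex for every $R\geq r$;
\item $S_u$ preserves area;
\item suitable sequences of symmetrisations converge in the Hausdorff metric to the disc $r_K B^2$.
\end{itemize}
Since $\vol_2(S_uK)=\vol_2(K)$, it suffices to prove the monotonicity
\[
\EE\vol_2([S_uK]_n^R)^k\leq \EE\vol_2([K]_n^R)^k
\]
for every direction $u$. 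Combined with continuity of $K\mapsto \EE\vol_2([K]_n^R)^k$ in the Hausdorff metric on bodies of fixed area, this yields the inequality.

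\textbf{Monotonicity step.} This is the main obstacle. Take $u=e_2$ and write each point as $\bX_j=(x_j,y_j)$, with $y_j$ uniform on the chord $K\cap(\{x_j\}\times\RR)$. Set $y_j=m(x_j)+t_j$, where $m(x_j)$ is the chord midpoint. Fix the abscissae $x_1,\dots,x_n$ and the offsets $t_1,\dots,t_n$, and consider the one-parameter family of point sets
\[
\bX_j(s)=\bigl(x_j,\; s\,m(x_j)+t_j\bigr),\qquad s\in[-1,1].
\]
The value $s=1$ gives the configuration in $K$, $s=-1$ gives its mirror image under reflection in the $x$-axis, which has the same hull area, and $s=0$ gives the configuration in $S_uK$. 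Classically, $s\mapsto\vol_2(\conv\{\bX_j(s)\})$ is convex along such a parallel chord movement (a shadow system), so the value at $s=0$ is at most the value at $s=1$.

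I need the analogous statement for $\phi(s):=\vol_2(\conv_R\{\bX_j(s)\})$. Convexity of $\phi$ is doubtful, because discs of radius $R$ are not preserved by shears. A better route is to use Theorem~\ref{thm:R-buchta} in the following way. The hull $\conv_R A$ is the set of points $\bz$ such that every disc of radius $R$ with centre in the set
\[
C_R(A):=\{\bc:\|\bc-\bx\|\leq R\text{ for all }\bx\in A\}
\]
contains $\bz$. So it is enough to show that the ball-convex hull of the symmetrised configuration is contained in the Steiner symmetral, in direction $u$, of the hull of the original configuration. I plan to verify this containment chord by chord. On each vertical line, the section of $\conv_R$ of the symmetrised points should have length at most the average of the section lengths for $s=1$ and $s=-1$. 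This is a Brunn--Minkowski-type statement for the $R$-hull, and it is exactly the kind of inclusion \cite{AAF:2026} establish in proving that Steiner symmetrisation preserves $r$-ball-convexity, namely $S_u(\conv_R A)\supseteq \conv_R(\text{symmetrised }A)$. I would first try to prove it directly, by showing that the midpoint of centres $\bc,\bc'$ of two radius-$R$ discs covering the reflected configurations lies at distance at most $R$ from the symmetrised points. This follows from convexity of the Euclidean norm after reflection, provided the centres are paired appropriately.

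\textbf{Conclusion and equality.} Given the inclusion, $\vol_2(\conv_R\{\bX_j(0)\})\leq\vol_2(\conv_R\{\bX_j(1)\})$ holds pointwise for every $(x,t)$. Raising to the power $k$ and integrating over the uniform measure, which is preserved by $(x,t)$-coordinates, gives the monotonicity. Iterating symmetrisations and passing to the limit then gives the inequality. For the equality case, I would show that strict inequality occurs on a set of positive measure of configurations whenever $K$ is not symmetric in some direction $u$. Having equality for every direction forces $K$ to be symmetric in all directions, hence a disc. The delicate point is to exhibit, for non-symmetric $K$, an open set of $n$-tuples (already for $n=2$, where the $R$-segments of the two configurations are compared) on which the chord-length inequality is strict.
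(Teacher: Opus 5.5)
Your outer framework matches the paper's: planar Steiner symmetrisation preserving $r$-ball-convexity, monotonicity under each $S_u$, convergence to $r_KB^2$ with continuity, and strictness for the equality case. The monotonicity step, however, which is the real content, does not work as you set it up.

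\textbf{The pointwise inequality is false.} The configuration at $s=-1$, namely $(x_j,-m(x_j)+t_j)$, is not the mirror image of the one at $s=1$; that image is $(x_j,-m(x_j)-t_j)$. Moreover, the inequality $\vol_2(\conv_R\{\bX_j(0)\})\leq\vol_2(\conv_R\{\bX_j(1)\})$ that you conclude with fails. Suppose the points $\bX_j(1)$ lie on one horizontal line while the chord midpoints $(x_j,m(x_j))$ are not collinear, which happens for suitable non-elliptic $K$. Then for large $R$ the hull of the $\bX_j(1)$ is a thin $R$-segment of area $O(1/R)$. The symmetrised points $(x_j,t_j)$, however, span a triangle of fixed positive area. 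The same example refutes your containment $\conv_R\{\bX_j(0)\}\subseteq S_u(\conv_R\{\bX_j(1)\})$.

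Only the averaged inequality $\phi(0)\leq\tfrac12(\phi(1)+\phi(-1))$ can hold. Raised to the power $k$, it gives $\phi(0)^k\leq\tfrac12(\phi(1)^k+\phi(-1)^k)$. You then need that the $s=-1$ term integrates to $\EE\vol_2([K^\dag]_n^R)^k=\EE\vol_2([K]_n^R)^k$, where $K^\dag$ is the reflection of $K$. This holds because $t_j\mapsto -t_j$ preserves the uniform law on each chord, and it is exactly how the paper uses the reflected body $K^\dag$.

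\textbf{No valid argument for the averaged inequality.} Your midpoint-of-centres computation proves $\tfrac12\bigl(C_R(\{\bX_j(1)\})+C_R(\{\bX_j(-1)\})\bigr)\subseteq C_R(\{\bX_j(0)\})$. By duality this only gives $\conv_R\{\bX_j(0)\}\subseteq N:=C_R\bigl(\tfrac12(C_R\{\bX_j(1)\}+C_R\{\bX_j(-1)\})\bigr)$. But $N$ also contains the vertical fibrewise average $M$ of $\conv_R\{\bX_j(1)\}$ and $\conv_R\{\bX_j(-1)\}$. So this inclusion points the wrong way and controls no chord length.

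Your chord-by-chord plan needs something like $\conv_R\{\bX_j(0)\}\subseteq M$, that is, that $M$ is itself $R$-ball-convex. This is a genuinely planar fact: for $d\geq3$ it already fails for Steiner symmetrals, cf.\ \cite{AAF:2026}*{Sec.~4.2}. The paper does not prove it by hand. It invokes \cite{AAF:2026}*{Thm.~3.7}, which states that $s\mapsto\vol_2(\conv_R\{\bx_i+s\alpha_i\bv\})$ is convex along such parallel chord movements. Your dismissal of this convexity as doubtful is therefore mistaken; it is the key input.

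The same convexity also gives strictness. On an open set of configurations, the area contains a strictly convex summand $\tfrac12V_R(\ell_{12}(s))$ coming from an $R$-edge. This yields the equality case, which your sketch leaves open. Theorem~\ref{thm:R-buchta} plays no role in this argument.
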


The case $n=2$ can also be extended to all dimensions $d\geq 2$ using a result of Pfiefer \cite{Pfiefer:1990}, see Corollary \ref{cor:isoperimetric}.

By Theorem \ref{thm:R-groemer} $p_{4,2}^R(K)$ is minimised and
$\EE f_0([K]_4^R)$ is maximised by $r_K B^2$.
However, unlike in the classical case, $\EE f_0([K]_4^R)$ does not determine the behaviour of $p_{4,4}^R(K)$.

We conjecture that $p_{4,4}^R(K)$ is maximal for the disc among all $r$-ball-convex bodies of fixed area, see Conjecture \ref{conj:p43-p44}. Although the full inequality seems out of reach, we are able to prove the following.

\begin{theorem}[the disc as an extremiser of $p_{4,4}^R$]\label{thm:intro-p43}
    Let $K\subset \RR^2$ be an $r$-ball-convex body and let $R\geq r >0$. If
    \begin{equation}\label{eqn:small-body}
        (\diam K)^3 \leq \frac{8}{\pi} \vol_2(K)\,R,
    \end{equation}
    then
    \begin{equation*}
        p_{4,4}^R(K) \leq p_{4,4}^R(r_K B^2),
    \end{equation*}
    with equality if and only if $K$ is a disc.
    In particular, \eqref{eqn:small-body} holds true for any convex body $K\subset \RR^2$ of constant width $r$.
\end{theorem}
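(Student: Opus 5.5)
The plan is to rewrite $p_{4,4}^R(K)$ through the Efron--Buchta formulas \eqref{eqn:pnk} and Theorem~\ref{thm:sylvester-general}. This splits it into a classical Sylvester term and a term that depends only on the law of the half-distance $L_K$. I would then compare each part with the disc separately.

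\textbf{Step 1: the formula.} Write $A=\vol_2(K)$. From \eqref{eqn:pnk} we get
$p_{4,4}^R(K)=1-4\,\EE\vol_2([K]_3^R)/A+6\,\EE V_R(L_K)^2/A^2$.
Inserting \eqref{eqn:intro-vol3} gives
\begin{equation*}
 p_{4,4}^R(K)=1-\frac{4\,\EE\vol_2([K]_3)}{A}-\frac{1}{A^2}\,\EE\, h_A(L_K),
\end{equation*}
where
\begin{equation*}
 h_A(\ell):=6A\,V_R(\ell)+3V_R(\ell)^2+48\,\Delta_R(\ell)V_R(\ell)-64\,\ell^4 .
\end{equation*}
Both sides are invariant under the scaling $K\mapsto r_KB^2$ of area $A$, so it suffices to prove two inequalities. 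The first is $\EE\vol_2([K]_3)\ge\EE\vol_2([r_KB^2]_3)$, which is Groemer's theorem \cite{Groemer:1974} (equivalently Blaschke's), with equality only for ellipses. The second is $\EE h_A(L_K)\ge \EE h_A(L_{r_KB^2})$. Since every ellipse that is $r$-ball-convex and not a disc is excluded by strictness in the second inequality, the equality case reduces to that of the second inequality.

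\textbf{Step 2: rearrangement for increasing functions.} For every $t$ we have
\begin{equation*}
 \PP(L_K\le t)=A^{-2}\int\!\!\int \bone_K(x)\bone_K(y)\bone_{B(0,2t)}(x-y)\,\dint x\,\dint y .
\end{equation*}
By the Riesz rearrangement inequality (this is the ingredient behind Pfiefer's result and the case $n=2$ of Theorem~\ref{thm:R-groemer}), this quantity is at most the same expression for $r_KB^2$, with strict inequality for some $t$ unless $K$ is a disc. Hence $L_K$ stochastically dominates $L_{r_KB^2}$. Therefore $\EE\varphi(L_K)\ge\EE\varphi(L_{r_KB^2})$ for every $\varphi$ that is nondecreasing on $[0,\tfrac12\diam K]$, and the inequality is strict if $\varphi$ is strictly increasing and $K$ is not a disc. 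We also have $\diam(r_KB^2)\le\diam K$ by the isodiametric inequality, so both laws are supported in $[0,\tfrac12\diam K]$.

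\textbf{Step 3: monotonicity of $h_A$ (the main obstacle).} One computes $V_R'(\ell)=4\ell^2/\sqrt{R^2-\ell^2}$ and $(\Delta_RV_R)'=\Delta_R'V_R+4\ell^3$ with $\Delta_R'=(R^2-2\ell^2)/\sqrt{R^2-\ell^2}$. Hence
\begin{equation*}
 h_A'(\ell)\ge \frac{24A\ell^2}{\sqrt{R^2-\ell^2}}+192\ell^3+48\,\Delta_R'(\ell)V_R(\ell)-256\ell^3 .
\end{equation*}
The $V_R^2$ term is increasing and can be dropped. The dangerous terms are $-64\ell^3$ and the possibly negative $\Delta_R'V_R$ once $\ell>R/\sqrt2$. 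Using $V_R(\ell)\le \tfrac{4\ell^3}{3\sqrt{R^2-\ell^2}}$, obtained by integrating the derivative bound, one has $48|\Delta_R'|V_R\lesssim 64\ell^3\cdot\ell^2/(R^2-\ell^2)$. The aim is therefore to show
\begin{equation*}
 24A\ge 64\,\ell\,\sqrt{R^2-\ell^2}\,(1+\text{correction})
\end{equation*}
for $\ell\le \tfrac12\diam K$. With $\ell\le D/2$ and $D\le 2R$, a crude bound is $64\ell\sqrt{R^2-\ell^2}\le 32DR$. This does not yet match \eqref{eqn:small-body} exactly. I expect that the precise constant $8/\pi$ comes from bounding more carefully, or from first using the isodiametric inequality $A\le\pi D^2/4$ to trade factors of $A$ against $D$. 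I would first check the leading-order regime $\ell\ll R$, where the condition reads $24A\ell^2/R\ge 64\ell^3$, i.e.\ $\ell\le 3A/(8R)$. I would then tune the estimate until \eqref{eqn:small-body} suffices on all of $[0,D/2]$. If pointwise monotonicity fails near $\ell=D/2$, the fallback is to use $\PP(L_K\le t)$ directly together with an integration by parts against $h_A'$ on the region where it is negative.

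\textbf{Step 4: constant width.} For $K$ of constant width $r$ we have $\diam K=r\le R$, and by the Blaschke--Lebesgue theorem $A\ge\tfrac12(\pi-\sqrt3)r^2$. Hence $\tfrac8\pi AR\ge \tfrac{4(\pi-\sqrt3)}{\pi}r^2R\approx1.79\,r^2R\ge r^3$, so \eqref{eqn:small-body} holds.
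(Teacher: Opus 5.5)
Your Steps~1, 2 and~4 are sound and match the paper. Your $h_A$ is exactly the kernel $\Psi=6\vol_2(K)V_R+h_R$ of \eqref{eqn:p43-decomp}, Step~2 amounts to Pfiefer's rearrangement theorem, and the constant-width check is identical.

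The genuine gap is Step~3. It carries the entire content of the hypothesis \eqref{eqn:small-body}, you leave it open, and the estimates you propose there cannot be tuned to reach it. For $\ell\ll R$ one has $\Delta_R'(\ell)\approx R$ and $V_R(\ell)\approx\frac{4\ell^3}{3R}$. So the term $48\,\Delta_R'V_R$ equals $64\ell^3+O(\ell^5/R^2)$ and cancels the $-64\ell^3$ to leading order. You use that term only through its sign, discarding it where it is positive. Without it you are left with $24A\geq 64\,\ell\sqrt{R^2-\ell^2}$, which is your ``leading-order'' condition $\ell\leq 3A/(8R)$. This has two consequences:
\begin{itemize}
\item For fixed $K$ your condition fails for all large $R$, whereas \eqref{eqn:small-body} becomes easier as $R$ grows.
\item It already fails for every body of constant width $r$ when $R=r$. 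For the Reuleaux triangle at $\ell=r/2$ one has $24A\approx16.9\,r^2<16\sqrt3\,r^2$. So not even the last sentence of the theorem would follow.
\end{itemize}
Even dropping only the derivative of $3V_R^2$, as you do, is too lossy. Keeping everything else exact, the small-$\ell$ threshold for $A$ becomes $\frac{16}{5}\ell^3/R$ instead of $\frac{28}{15}\ell^3/R$. Since $\frac{16}{5}>\pi$, the constant $8/\pi$ in \eqref{eqn:small-body} would no longer suffice.

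The missing idea is to keep every term and compute the derivative exactly. Using $\Delta_RV_R'=4\ell^3$ and $\Delta_R'=(R^2-2\ell^2)/\sqrt{R^2-\ell^2}$, all contributions combine into
\[
h_A'(\ell)=\frac{24\,\ell^2}{\sqrt{R^2-\ell^2}}\bigl(A-A_{\min}^R(\ell)\bigr),\qquad
A_{\min}^R(\ell)=\frac83\,\Delta_R(\ell)-2\Bigl(\frac{R^2}{\ell^2}-\frac32\Bigr)V_R(\ell).
\]
This is \eqref{eqn:g-derivative}. Here the $O(\ell R)$ parts cancel, and $A_{\min}^R(\ell)=\frac{28}{15}\ell^3/R+O(\ell^5/R^3)$.

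The paper then shows in Lemma~\ref{lem:Amin} that $A_{\min}^1$ is strictly increasing on $(0,1)$. It also shows that $t\mapsto A_{\min}^1(t)/t^3$ increases from $\frac{28}{15}$ to $\pi$. By the scaling $A_{\min}^R(t)=R^2A_{\min}^1(t/R)$, this gives $A_{\min}^R(\ell)<A_{\min}^R(D/2)\leq\pi D^3/(8R)$ for $0<\ell<D/2$. That is where the constant $8/\pi$ comes from. Then \eqref{eqn:small-body} yields $h_A'>0$ on $(0,D/2)$, after which your Steps~1 and~2 finish the proof.

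Your fallback of integrating by parts over the region where $h_A'<0$ does not close the gap. The rearrangement step only provides stochastic dominance, which controls expectations of nondecreasing kernels. A kernel that decreases somewhere on $[0,D/2]$ gives no inequality without further information on the law of $L_K$.
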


As a by-product of our methods we obtain two immediate corollaries which we briefly outline in the following.

\subsection{Isoperimetric inequality for the ball-convex analogue of affine surface area}

First, if $K\subset \RR^2$ is an $r$-ball-convex body of class $C^2_+$, then Fodor, Kevei and V\'{\i}gh \cite{FKV:2014} showed that
\begin{equation*}
    \lim_{n\to\infty} \Bigl(\frac{n}{\vol_2(K)}\Bigr)^{\frac{2}{3}}\, \Bigl(\vol_2(K) - \EE\vol_2([K]_n^r) \Bigr)
    = c_2 \as^r(K),
\end{equation*}
where $c_2 := (2/3)^{1/3}\, \Gamma(5/3)$ and $\as^r(K)$ is the $r$-ball-convex analogue of the affine surface area,  see Subsection \ref{sec:c-affine}. We note that $\as^r$ was recently extended to all dimensions $d\geq2$ for all $r$-ball-convex bodies by Schütt, Werner and Yalikun \cite{SWY:2025}.

For $n\to\infty$, Theorem \ref{thm:R-groemer} therefore yields the following isoperimetric inequality in the limit.

\begin{corollary}[isoperimetric inequality for $\as^r(K)$ in the plane]\label{cor:R-affine-iso}
    Let $K\subset \RR^2$ be an $r$-ball-convex body of class $C^2_+$. Then
    \begin{equation}\label{eqn:R-affine-iso}
        \as^r(K) \leq \as^r(r_K B^2) = 2\pi\, r_K^{\frac{2}{3}} \Bigl(1-\frac{r_K}{r}\Bigr)^{\frac{1}{3}}.
    \end{equation}
\end{corollary}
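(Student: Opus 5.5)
Combine Theorem~\ref{thm:R-groemer} (case $n$ arbitrary, $k=1$, $R=r$) with the asymptotic formula of Fodor, Kevei and V\'igh quoted above, and then compute the right-hand side for the disc.

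\emph{Inequality.} By Theorem~\ref{thm:R-groemer} with $R=r$ and $k=1$, for every $n\geq2$,
\begin{equation*}
    \vol_2(K)-\EE\vol_2([K]_n^r)\leq \vol_2(r_KB^2)-\EE\vol_2([r_KB^2]_n^r),
\end{equation*}
since $\vol_2(K)=\vol_2(r_KB^2)$ by the choice of $r_K$. Before applying the limit to the disc we must check two things.
\begin{itemize}
    \item $r_KB^2$ is an admissible body, i.e.\ $r$-ball-convex. This holds because $r_K\leq r$. Indeed, $K$ lies in some disc of radius $r$, being an intersection of such discs, so $\vol_2(K)\leq\pi r^2$.
    \item $r_KB^2$ is of class $C^2_+$. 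This is clear.
\end{itemize}
Multiply both sides by $(n/\vol_2(K))^{2/3}$, which is the same normalisation for both bodies, and let $n\to\infty$. Applying the Fodor--Kevei--V\'igh limit to each side gives $c_2\as^r(K)\leq c_2\as^r(r_KB^2)$, which is the inequality in \eqref{eqn:R-affine-iso}.

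\emph{Value for the disc.} It remains to evaluate $\as^r(\rho B^2)$ with $\rho=r_K$, using the definition of Subsection~\ref{sec:c-affine}. I expect that definition to be
\begin{equation*}
    \as^r(K)=\int_{\partial K}\kappa^{1/3}\bigl(1-r\kappa\bigr)^{1/3}\,\dint s
\end{equation*}
or an equivalent normalisation. On a circle of radius $\rho$ the curvature is $\kappa=1/\rho$ and the length is $2\pi\rho$, so
\begin{equation*}
    \as^r(\rho B^2)=2\pi\rho\cdot\rho^{-1/3}\Bigl(1-\frac{\rho}{r}\Bigr)^{1/3}=2\pi\rho^{2/3}\Bigl(1-\frac{\rho}{r}\Bigr)^{1/3}.
\end{equation*}
With $\rho=r_K$ this is the stated value.

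The step needing most care is matching the exact integrand in that definition, in particular whether the correction factor is $(1-\rho/r)$ or $(1/\rho-1/r)$ up to a power. The stated value fixes the combination $\kappa^{1/3}(1-r\kappa)^{1/3}$ reading above. Equivalently, with $\kappa_r=1/r$ the curvature of the generating circle, the integrand is $\kappa^{1/3}\bigl((\kappa-\kappa_r)/\kappa\bigr)^{1/3}$ written with the sign convention that makes it nonnegative under the constraint $\kappa\geq1/r$.
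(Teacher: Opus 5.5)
Your proof is correct and follows essentially the same route as the paper: Theorem~\ref{thm:R-groemer} with $R=r$, $k=1$ compares the missed areas for every $n$, and the Fodor--Kevei--V\'igh limit after multiplying by $n^{2/3}$ gives the inequality; your check that $r_KB^2$ is an admissible $r$-ball-convex $C^2_+$ body makes explicit what the paper leaves implicit. One small slip: the paper defines $\as^r(K)=\int_{\partial K}(\kappa-r^{-1})^{1/3}\,\dint s$, i.e.\ $\kappa^{1/3}\bigl(1-\frac{1}{r\kappa}\bigr)^{1/3}$, not $\kappa^{1/3}(1-r\kappa)^{1/3}$ as you displayed (the latter has a negative radicand when $\kappa>1/r$); your actual computation and your closing reformulation both use the correct integrand, so the value $2\pi r_K^{2/3}(1-r_K/r)^{1/3}$ stands.
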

Corollary~\ref{cor:R-affine-iso} implies, in particular, the isoperimetric inequalities derived previously in \cite{AAC:2025} and \cite{SWY:2025} in the case $d=2$ for sufficiently smooth $r$-ball-convex bodies, and for $r\to\infty$ it yields the classical affine isoperimetric inequality.

\subsection{Tail-distribution of the circumradius of three random points}

The second corollary of our methods follows as a by-product of Lemma~\ref{lem:structure}, which identifies the event
$\{f_0([K]_3^R)=2\}$ with the purely metric event $\{\varrho([K]_3)>R\}$, where $\varrho$ denotes the circumradius of the triangle $[K]_3$.
Together with \eqref{eqn:pnk3}, it therefore answers a question which is unrelated to ball-convexity: it determines the distribution of the circumradius
$\varrho([K]_3)$ on $[r,\infty)$, and explicitly for the disc by
Theorem~\ref{thm:sylvester-r-convex}.

\begin{corollary}[circumradius of three random points]\label{cor:intro-circumradius}
    Under the same hypotheses as in Theorem~\ref{thm:sylvester-general}, we have
    \begin{equation*}
        \PP(\varrho([K]_3)>R) = p_{3,2}^{R}(K)
        \qquad\text{for every } R \geq r,
    \end{equation*}
    and therefore, by Theorem \ref{thm:R-groemer},
    \begin{equation*}
        \PP(\varrho([K]_3)>R) \geq \PP\biggl(\varrho([B^2]_3)>\frac{R}{r_K}\biggr).
    \end{equation*}
    Moreover
    \begin{align*}
        \PP(\varrho([B^2]_3)>1) & = \frac{32}{\pi^2}-3 \approx 24.2278\% ,                                      \\
        \PP(\varrho([B^2]_3)>R) & = \frac{1024}{525\pi^2R}+O\!\bigl(R^{-3}\bigr) \quad \text{for $R\to\infty$}.
    \end{align*}
\end{corollary}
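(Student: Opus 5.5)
The plan is to reduce everything to the identity $\{f_0([K]_3^R)=2\}=\{\varrho([K]_3)>R\}$ (up to null sets), which is the content of Lemma~\ref{lem:structure}. I would then combine it with \eqref{eqn:pnk3}, with Theorem~\ref{thm:R-groemer}, and with the explicit disc computations behind Theorem~\ref{thm:sylvester-r-convex}.

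\textbf{Step 1: the geometric identity.} For three points $\bx,\by,\bz$ in general position, $f_0(\conv_R\{\bx,\by,\bz\})=2$ means that one point, say $\bz$, lies in the $R$-segment $\conv_R\{\bx,\by\}$.

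If the circumradius satisfies $\varrho>R$, the triangle is obtuse with its longest side, say $\bx\by$, opposite the obtuse angle at $\bz$. Any disc of radius $R$ containing $\bx,\by$ then contains the short arc of the corresponding circle through $\bx,\by$, and this arc bounds a region containing $\bz$. So $\bz\in\mathrm{Sg}_R$.

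Conversely, if $\varrho\le R$, then every point of the lens $\conv_R\{\bx,\by\}$ sees $\bx\by$ under an angle at least $\pi-\arcsin(\ell/R)$. A point $\bz$ on the circumcircle, or forming a non-obtuse triangle, fails this, so $\bz$ is not in the lens. The same holds for the other two choices of the pair.

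I regard this inscribed-angle comparison as the main (but elementary) obstacle, and I would cite Lemma~\ref{lem:structure} rather than redo it. Since the assumption $R\ge r$ guarantees $[K]_3^R\subset K$, this gives $\PP(\varrho([K]_3)>R)=p_{3,2}^R(K)$.

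\textbf{Step 2: the comparison inequality.} By \eqref{eqn:pnk3}, $p_{3,2}^R(K)=3\,\EE\vol_2([K]_2^R)/\vol_2(K)$. Theorem~\ref{thm:R-groemer} with $n=2$, $k=1$ gives $\EE\vol_2([K]_2^R)\ge \EE\vol_2([r_KB^2]_2^R)$, and the two bodies have the same area. Hence $p_{3,2}^R(K)\ge p_{3,2}^R(r_KB^2)$.

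Next I use scaling. Dilation by $r_K$ maps $\conv_R$ to $\conv_{R r_K}$ and multiplies circumradii by $r_K$. Therefore $p_{3,2}^R(r_KB^2)=\PP(\varrho([B^2]_3)>R/r_K)$, where Step 1 is applied in $B^2$ with admissible radius $R/r_K\ge r/r_K\ge 1$. This last bound holds because $r_K\le r$ for an $r$-ball-convex $K$ ($K$ fits in a disc of radius $r$).

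\textbf{Step 3: the disc values.} At $R=1$, Step 1 gives $\PP(\varrho([B^2]_3)>1)=p_{3,2}^1(B^2)=\frac{32}{\pi^2}-3$ by Theorem~\ref{thm:sylvester-r-convex}.

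For the asymptotics I expand $m_1(R)=\EE V_R(L_{B^2})$. From \eqref{eqn:segment-2d}, with $\arcsin u=u+u^3/6+\dots$ and $\sqrt{1-u^2}=1-u^2/2-u^4/8+\dots$, I get
\[
V_R(\ell)=2R^2\Bigl(\frac{\ell}{R}+\frac{\ell^3}{6R^3}+\frac{3\ell^5}{40R^5}\Bigr)-2\ell R\Bigl(1-\frac{\ell^2}{2R^2}-\frac{\ell^4}{8R^4}\Bigr)+O(R^{-5})=\frac{4\ell^3}{3R}+\frac{2\ell^5}{5R^3}+O(R^{-5}).
\]
This yields $p_{3,2}^R(B^2)=\frac{3}{\pi}\bigl(\frac{4}{3R}\EE L^3+O(R^{-3})\bigr)$.

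The remaining input is $\EE L_{B^2}^3=\frac18\EE\|\bX-\bY\|^3$. From the classical density \eqref{eqn:dist-density-2d} one gets $\EE\|\bX-\bY\|^3=\frac{2048}{525\pi}$. Hence $\EE L^3=\frac{256}{525\pi}$, and $\frac{4}{\pi R}\cdot\frac{256}{525\pi}=\frac{1024}{525\pi^2R}$, as claimed.
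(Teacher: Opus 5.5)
Your proposal is correct and follows essentially the same route as the paper. Lemma~\ref{lem:structure} applies because $K$ lies in a disc of radius $r\leq R$, and it identifies $\{f_0([K]_3^R)=2\}$ with $\{\varrho([K]_3)>R\}$ up to the null event $\{\varrho([K]_3)=R\}$; on that event your sketched converse in Step~1 fails, since the obtuse vertex then lies on the boundary of the closed segment, but this does not affect the probability. The comparison is Theorem~\ref{thm:R-groemer} with $n=2$ plus scaling, exactly as in Corollary~\ref{cor:isoperimetric}, and the disc values are $p_{3,2}^1(B^2)=\frac3\pi m_1(1)$ and the $d=2$ case of Theorem~\ref{thm:circumradius-d}, whose inputs are exactly your expansion $V_R(\ell)=\frac{4\ell^3}{3R}+O(R^{-3})$ (uniform in $\ell\in[0,1]$) and $\EE L_{B^2}^3=\frac{256}{525\pi}$.
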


The second part of Corollary~\ref{cor:intro-circumradius} is the case $d=2$ of Theorem~\ref{thm:circumradius-d} below.
The circumradius of a uniform
sample has been investigated by Affentranger \cite{Affentranger:1989}, who showed that the
probability that the circumdisc of $[B^2]_3$ is contained in $B^2$ is $\tfrac25$.
Le~Ca\"er \cite{LeCaer:2017} determined the distribution of $\varrho([B^2]_3)$
conditioned on the same containment event.  The unconditional distribution was
examined in \cite{LeCaer:2017} only by Monte Carlo simulation.
Corollary~\ref{cor:intro-circumradius} confirms the decay rate conjectured on
that basis and identifies the constant, $\frac{1024}{525\pi^2}\approx0.19762$,
within the confidence interval $0.197\pm0.001$ reported in \cite{LeCaer:2017}*{Sec.~VIII}.

\subsection{Outline of the paper}

The paper is organised as follows.  In Section~\ref{sec:background}, we collect the
necessary background on $R$-ball-convexity and in Section~\ref{sec:Efron-Buchta} we prove
Theorem~\ref{thm:R-buchta}. Section~\ref{sec:two} is devoted to two random
points: we compute the moments $\EE\vol_d([rB^d]_2^R)^k$ of the volume of a
random ball-segment in every dimension and deduce the probabilities $p_{n,2}^R$.
In Section~\ref{sec:three}, we analyse the
combinatorial structure of the $R$-ball-convex hull of three points, deduce
the distribution of the circumradius of three random points in $B^d$ in every
dimension, and prove Theorems~\ref{thm:sylvester-general}
and~\ref{thm:sylvester-r-convex} up to the evaluation of three one-dimensional
moments. These are evaluated in Section~\ref{sec:moments}, first as series with
rational coefficients and then in closed form in terms of $\artanh\frac1R$ and
dilogarithms.  In Section~\ref{sec:extremal}, we investigate the extremal problem
in the plane and prove Theorem~\ref{thm:R-groemer} and Theorem~\ref{thm:intro-p43}.
In the final Section~\ref{sec:discussion}, we list some open questions for future investigation.

\section{Background on \texorpdfstring{$R$}{R}-ball-convexity}\label{sec:background}

We denote the Euclidean inner product in $\RR^d$ by $\langle\cdot,\cdot\rangle$ and the norm by $\|\cdot\|$. We set
\begin{equation*}
    B^d(\bc,\rho) := \{\bx\in\RR^d : \|\bx-\bc\|\leq\rho\}
\end{equation*}
for the closed ball with centre $\bc\in\RR^d$ and radius $\rho>0$, and
write $B^d:=B^d(\mathbf{0},1)$ for the Euclidean unit ball, whose volume is
$\kappa_d:=\vol_d(B^d)$.  We write $\inte$, $\partial$ and $\conv$ for the
interior, the boundary, and the convex hull, respectively, and $\vol_d$ for the $d$-dimensional
Lebesgue measure.  Throughout the paper, we assume $d\geq2$ without further mentioning it.  For standard facts from the theory of
convex bodies, we refer to Schneider's monograph \cite{Sch14}, and for information on random polytopes to the book \cite{SW:2008} by Schneider and Weil.

We recall the basic definitions of $R$-ball-convexity and fix our notation
against the systematic accounts of the theory.  Artstein-Avidan and Florentin
\cite{AAF:2025} study the class $\mathcal{S}^d$ of \emph{ball-bodies}, which are
intersections of translates of the unit ball, together with the associated
$c$-duality $A\mapsto A^{c}$. Bezdek, L\'angi and Nasz\'odi \cite{BLN:2026}
survey the theory of $r$-ball bodies, concentrating especially on their volumetric properties, the combinatorial structure of ball-polyhedra, best and random approximations, and some related problems, such as the Kneser--Poulsen conjecture.  Marynych and Molchanov
\cite{MM:2022}*{Sec.~2} work in the more general setting of the \emph{$M$-hull}
$\conv_M(A)$, which is the intersection of all translates of a fixed convex body $M$
containing $A$, and of the corresponding $M$-strongly convex sets in the sense
of \cites{Polovinkin:1996,BP:2000}, a point of view also discussed in detail by L\'angi,
Nasz\'odi and Talata \cite{LNT:2013}. The paper \cite{BLN:2026} also reviews results on $M$-convex bodies under the name $L$-convexity.  Our terminology mostly follows Bezdek et al. \cite{BLN:2026}. In this dictionary, our $\conv_R$ is the
$M$-hull for $M=RB^d$, and $\conv_1$ is the $c$-hull of
\cite{AAF:2025}*{Def.~1.2}.

We note that the origins of $r$-ball convexity go back to at least the 1930s. The concept appeared in the paper \cite{M:1935} under the name \emph{\"Uberkonvexit\"at} in the Minkowski geometry setting. In the decades following Mayer's paper $r$-ball-convexity was reintroduced several times under different names. For the early history of the subject, we refer to the survey \cite{BLN:2026} by Bezdek et al.\ and the book by Martini, Montejano and Oliveros \cite{MMO:2019} and the references therein.

Since Euclidean balls are generating sets in the sense of
\cite{Polovinkin:1996}, see also \cite{LNT:2013}, $R$-ball-convexity coincides
with the \emph{$R$-spindle convexity} of \cite{BLNP:2007}. A set is called $r$-spindle convex if along with any two of its points, the $r$-ball-segment they span is also contained in the body. The $r$-ball-segment is the intersection of all radius $r$ closed balls containing the two points, and it is also called the \emph{$r$-spindle} (see
\cite{BLNP:2007}).  For a general convex body $M$ the two notions differ: $M$-strong
convexity can be strictly stronger than $M$-spindle convexity. The two concepts are
equivalent exactly when $M$ is a generating set, see \cite{MM:2022}*{p.~8},
\cite{LNT:2013} and \cite{AAF:2025}*{Prop.~1.5}.

Let $R>0$ and let $A\subset\RR^d$ be bounded.  The \emph{$R$-ball-convex
    hull} of $A$ is
\begin{equation}\label{eqn:convR-def}
    \conv_R(A) := \bigcap\{B^d(\bc,R) : \bc\in\RR^d,\ A\subset B^d(\bc,R)\},
\end{equation}
the intersection of all closed balls of radius $R$ containing $A$. If such a ball does not exist, then
$\conv_R(A):=\RR^d$.  A convex body
$K\subset\RR^d$ is \emph{$r$-ball-convex} if $K=\conv_r(K)$, that is, if
$K$ is equal to the intersection of closed balls of radius $r$ containing it.

If $A=\{\bx_1,\dotsc,\bx_m\}$ is finite, a point $\bx_i\in A$ is
\emph{$R$-ball-extremal}, or briefly \emph{$R$-extremal}, if
$\bx_i\notin\conv_R(A\setminus\{\bx_i\})$.  We call the $R$-extremal points
of $A$ the \emph{vertices} of $\conv_R(A)$ and write $f_0(\conv_R A)$ for
their number. If, moreover, $A$ lies in an open ball of radius $R$, then by
Lemma~\ref{lem:R-minkowski} the set of $R$-extremal points is the
unique smallest subset of $A$ whose $R$-ball-convex hull is equal to
$\conv_R(A)$, so in this case $f_0$ is well defined.

If $\bx_i$ is not $R$-extremal, then $\bx_i\in\conv_R (A\setminus \{\bx_i\})$, and we say that $\bx_i$ is \emph{absorbed} by $A\setminus \{\bx_i\}$.

For $R=1$, the $R$-extremal points of $A$ are the $c$-extreme points $\ext^{c}$
of \cite{AAF:2025}*{Sec.~4}, and hence, after the rescaling of
Lemma~\ref{lem:elementary}\,(b), for every $R>0$. The quantity $f_0$ is the
zeroth entry of the $f$-vector of an $M$-strongly convex set introduced in
\cite{MM:2022}*{Def.~3.3 and Def.~4.4}, specialised to $M=RB^d$.

\begin{lemma}[elementary properties of $\conv_R$]\label{lem:elementary}
    Let $R\geq r>0$ and let $A,A'\subset\RR^d$ be bounded.
    \begin{enumerate}
        \item[(a)] $A\subset\conv_R(A)\subset \conv_r(A)$;
              $A\subset A'$ implies $\conv_R(A)\subset\conv_R(A')$; and
              $\conv_R(\conv_R(A))=\conv_R(A)$.
        \item[(b)] For every rigid motion $g$ of $\RR^d$ and every $t>0$,
              \begin{equation}\label{eqn:equivariance}
                  \conv_R(gA)=g\,\conv_R(A)
                  \qquad\text{and}\qquad
                  \conv_{t R}(t A)=t\,\conv_R(A).
              \end{equation}
              In particular only the ratio of the radius to the scale of $A$
              matters, and either of the two may be normalised.
        \item[(c)] If $K\subset\RR^d$ is $r$-ball-convex and $A\subset K$, then $\conv_R(A)\subset K$.
    \end{enumerate}
\end{lemma}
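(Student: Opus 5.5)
The plan is to verify each item directly from definition \eqref{eqn:convR-def}, using only that $\conv_R(A)$ is an intersection of balls and the elementary fact that a ball of radius $r\le R$ is an intersection of balls of radius $R$.

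For (a), the inclusion $A\subset\conv_R(A)$ holds because every ball in the intersection contains $A$; if no such ball exists, then $\conv_R(A)=\RR^d\supset A$. Monotonicity comes from comparing index sets. If $A\subset A'$, every $R$-ball containing $A'$ also contains $A$, so the family of balls for $A$ is larger and its intersection is smaller; the degenerate case, where no ball contains $A'$, is trivial.

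For idempotence, set $C=\conv_R(A)$. Every $R$-ball containing $A$ contains $C$ by definition, and every $R$-ball containing $C$ contains $A$. So the two index families coincide, which gives $\conv_R(C)=C$.

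For $\conv_R(A)\subset\conv_r(A)$, I would take any ball $B^d(\bc,r)\supset A$ and write it as the intersection of all balls $B^d(\bc',R)$ containing it. Concretely, these are the balls with $\|\bc'-\bc\|\le R-r$. That this intersection is exactly $B^d(\bc,r)$ follows by taking, for a point $\bx$ with $\|\bx-\bc\|>r$, the centre $\bc'=\bc-(R-r)\frac{\bx-\bc}{\|\bx-\bc\|}$, for which $\|\bx-\bc'\|=\|\bx-\bc\|+R-r>R$. Each such $B^d(\bc',R)$ contains $A$, so $\conv_R(A)\subset B^d(\bc',R)$ for all of them, hence $\conv_R(A)\subset B^d(\bc,r)$. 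Intersecting over all such $r$-balls yields the claim. If no $r$-ball contains $A$, the claim is trivial.

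For (b), a rigid motion $g$ maps $B^d(\bc,R)$ bijectively onto $B^d(g\bc,R)$, and $A\subset B^d(\bc,R)$ if and only if $gA\subset B^d(g\bc,R)$. So $g$ induces a bijection of the index families, and since $g$ commutes with intersections, $\conv_R(gA)=g\,\conv_R(A)$. The same argument works for the dilation $\bx\mapsto t\bx$, which maps $B^d(\bc,R)$ onto $B^d(t\bc,tR)$.

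For (c), I would use (a): from $A\subset K$ we get $\conv_R(A)\subset\conv_R(K)\subset\conv_r(K)=K$.

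The only step requiring a genuine argument is the representation of an $r$-ball as an intersection of $R$-balls in (a), and even that is a one-line separation computation, so no real obstacle is expected.
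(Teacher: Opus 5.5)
Your proposal is correct and follows the same route as the paper, which gives no detailed argument and simply calls all three items immediate from definition \eqref{eqn:convR-def}, citing the literature. Your direct verification supplies exactly the omitted details: the key step, that an $r$-ball is the intersection of the $R$-balls $B^d(\bc',R)$ with $\|\bc'-\bc\|\le R-r$, is the same fact the paper invokes in the introduction to get $[K]_n^R\supset[K]_n^{R'}$, and your handling of the degenerate case via the convention $\conv_R(A)=\RR^d$ is consistent throughout.
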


All three statements are immediate from \eqref{eqn:convR-def} and are known:
see \cite{BLNP:2007}, \cite{BLN:2026}, \cite{AAF:2025}*{Sec.~1} and \cite{MM:2022}*{Sec.~2}.

The operation of the hull $\conv_R$ is compatible with the taking of sections with affine subspaces.  We use the
following elementary fact to transfer planar statements to
higher dimensions.

\begin{lemma}\label{lem:slicing}
    Let $E\subset\RR^d$ be an affine subspace with $\dim E\geq1$, and let
    $A\subset E$ be a non-empty bounded set contained in some ball of radius
    $R$.  Write $\conv_R^{E}$ for the hull operation \eqref{eqn:convR-def}
    performed inside $E$, with balls of radius $R$ of $E$.  Then
    \begin{equation*}%\label{eqn:slicing}
        \conv_R(A)\cap E = \conv_R^{E}(A) .
    \end{equation*}
\end{lemma}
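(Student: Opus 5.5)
We prove the two inclusions separately. Throughout, write $\bc\mapsto\bc'$ for the orthogonal projection of $\RR^d$ onto $E$. We use two elementary facts.

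First, for $\bx\in E$ Pythagoras gives
\begin{equation*}
    \|\bx-\bc\|^2=\|\bx-\bc'\|^2+\|\bc-\bc'\|^2 .
\end{equation*}
Second, the intersection $B^d(\bc,R)\cap E$ is either empty or a ball of $E$ centred at $\bc'$ with radius $\rho=\sqrt{R^2-\|\bc-\bc'\|^2}\leq R$.

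For the inclusion $\conv_R(A)\cap E\subset\conv_R^E(A)$, take a ball $B_E$ of radius $R$ in $E$ with $A\subset B_E$, say with centre $\bc_0\in E$. It is the section $B^d(\bc_0,R)\cap E$ of the $d$-dimensional ball with the same centre and radius, and that ball contains $A$. Hence
\begin{equation*}
    \conv_R(A)\cap E\subset B^d(\bc_0,R)\cap E=B_E .
\end{equation*}
Intersecting over all such $B_E$ gives the inclusion. The hypothesis that $A$ lies in some ball of radius $R$ guarantees that both hulls are proper sets and not the whole space; we check this by projecting the centre of such a ball onto $E$, as in the next step.

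The reverse inclusion $\conv_R^E(A)\subset\conv_R(A)\cap E$ is the main step. Trivially $\conv_R^E(A)\subset E$, so it suffices to show that $\conv_R^E(A)\subset B^d(\bc,R)$ for every ball $B^d(\bc,R)\supset A$. Fix such a $\bc$. Then $B^d(\bc,R)\cap E=B^E(\bc',\rho)$ with $\rho\leq R$, and this ball of $E$ contains $A$.

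The key observation is that a ball of radius $\rho\leq R$ in $E$ is itself an intersection of balls of radius $R$ in $E$. Concretely, we claim
\begin{equation*}
    B^E(\bc',\rho)=\bigcap_{\bu}B^E\bigl(\bc'-(R-\rho)\bu,\,R\bigr),
\end{equation*}
where $\bu$ ranges over the unit vectors parallel to $E$.
\begin{itemize}
    \item Each ball on the right contains $B^E(\bc',\rho)$ by the triangle inequality.
    \item If $\bx\in E$ satisfies $\|\bx-\bc'\|>\rho$, choose $\bu=(\bx-\bc')/\|\bx-\bc'\|$. Then $\bx$ lies at distance $\|\bx-\bc'\|+R-\rho>R$ from $\bc'-(R-\rho)\bu$, so $\bx$ is excluded. (If $\dim E=1$, such a $\bu$ is one of the two directions, and the argument is the same.)
\end{itemize}
Every ball on the right contains $B^E(\bc',\rho)\supset A$, so each one occurs in the definition of $\conv_R^E(A)$. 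Therefore
\begin{equation*}
    \conv_R^E(A)\subset B^E(\bc',\rho)\subset B^d(\bc,R).
\end{equation*}
Intersecting over all $\bc$ finishes this inclusion.

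The only delicate point is this step: the section of a radius-$R$ ball by $E$ has a smaller radius, so it is not itself one of the balls in the definition of $\conv_R^E$. The representation of a smaller ball as an intersection of radius-$R$ balls resolves this, and it is essentially the monotonicity $\conv_R\subset\conv_\rho$ from Lemma~\ref{lem:elementary}(a) applied inside $E$.
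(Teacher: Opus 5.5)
Your proof is correct and complete, and it takes a genuinely different route from the paper. The paper gives no argument of its own for this lemma. It imports the statement from \cite{BLNP:2007}*{Prop.~5.2}, which is formulated for the spindle convex hull of a set of circumradius at most $R$. To apply it, the paper needs the identification of that hull with $\conv_R$, which holds because Euclidean balls are generating sets (cf.\ \cite{BLNP:2007}*{Cor.~3.4}).

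You instead work directly from the definition \eqref{eqn:convR-def}:
\begin{itemize}
\item The inclusion $\conv_R(A)\cap E\subset\conv_R^E(A)$ comes from lifting each radius-$R$ ball of $E$ to the $d$-ball with the same centre.
\item The reverse inclusion comes from slicing a ball $B^d(\bc,R)\supset A$ to the ball of $E$ centred at the projection of $\bc$ with radius $\rho\leq R$. You then write that smaller ball as an intersection of radius-$R$ balls of $E$, which is the fact underlying Lemma~\ref{lem:elementary}\,(a). Your explicit representation also covers the degenerate case $\rho=0$, where an appeal to $\conv_\rho$ would not literally make sense.
\end{itemize}

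Your route buys a self-contained, fully elementary argument with no spindle-convexity or generating-set machinery. It also shows that the hypothesis that $A$ lies in some ball of radius $R$ only serves to avoid the degenerate convention. If $A$ lies in no such ball of $\RR^d$, it lies in no radius-$R$ ball of $E$ either, and both sides equal $E$. The citation, in turn, costs nothing in length and places the statement within the existing theory of spindle convexity, alongside the other structural facts the paper borrows from \cite{BLNP:2007}.
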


This is \cite{BLNP:2007}*{Prop.~5.2}, stated there for the spindle convex hull
of a set of circumradius at most $R$, which is the same $\conv_R$ as $B^d$ is a generating set; see also \cite{BLNP:2007}*{Cor.~3.4}.

Note that Lemma~\ref{lem:slicing} yields that absorption among points in
a common affine subspace is decided inside that subspace: if
$A\cup\{\bx\}\subset E$, then $\bx\in\conv_R(A)$ if and only if
$\bx\in\conv_R^{E}(A)$.

It is also a consequence of Lemma~\ref{lem:slicing} that the $R$-ball-segment
$\conv_R\{\bx,\by\}\subset\RR^d$ is a body of revolution that is rotationally symmetric about the line containing the endpoints.

The following duality related to $\conv_R$ has been repeatedly introduced.  For a
bounded set $A\subset\RR^d$, let
\begin{equation*}
    C_R(A) := \bigcap_{\ba\in A} B^d(\ba,R)
    = \{\bc\in\RR^d : A\subset B^d(\bc,R)\}.
\end{equation*}
The elements of $C_R(A)$ are also called \emph{admissible centres}.
We note that $C_R(A)$ is the $c$-dual $A^{c}$ in the language of
\cite{AAF:2025}*{Def.~1.2} for $R=1$, and the set $\mathrm{cn}_M(A)$ of
\cite{MM:2022}*{Sec.~2} for $M=RB^d$.  As with $\conv_R^{E}$ in
Lemma~\ref{lem:slicing}, we write $C_R^{E}$ for the $C_R(\cdot)$ operation performed
in an affine subspace $E$, so that $C_R^{E}(A)=C_R(A)\cap E$ for
$A\subset E$.

\begin{lemma}
    \label{lem:polarity}
    Let $A\subset\RR^d$ be bounded.  Then $A\mapsto C_R(A)$ is inclusion
 reversing and
    \begin{equation}\label{eqn:polarity}
        \begin{split}
            \conv_R(A)
             & = \!\!\!\!\!\bigcap_{\bc\in C_R(A)}\!\!\!\!\! B^d(\bc,R)
            = \bigl\{\bx\in\RR^d : C_R(A)\subset B^d(\bx,R)\bigr\}
            = C_R\bigl(C_R(A)\bigr).
        \end{split}
    \end{equation}
    Moreover, for $\bx\in A$,
    \begin{equation}\label{eqn:extremal-iff}
        \bx \text{ is not $R$-extremal in } A
        \iff C_R(A\setminus\{\bx\}) = C_R(A).
    \end{equation}
\end{lemma}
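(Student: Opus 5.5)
The plan is to prove the three equalities in \eqref{eqn:polarity} directly from the definitions, and then to deduce \eqref{eqn:extremal-iff} from them.

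\emph{Inclusion reversal and the set descriptions.} Since $C_R(A)=\bigcap_{\ba\in A}B^d(\ba,R)$, enlarging $A$ adds balls to the intersection, so $A\subset A'$ gives $C_R(A')\subset C_R(A)$. The identity $\bc\in C_R(A)\iff A\subset B^d(\bc,R)$ says that $\bc\in C_R(A)$ exactly when $\|\bc-\ba\|\le R$ for all $\ba\in A$. Hence the family of balls in \eqref{eqn:convR-def} is precisely $\{B^d(\bc,R):\bc\in C_R(A)\}$, which gives the first equality. If $C_R(A)=\emptyset$, the convention $\conv_R(A)=\RR^d$ matches the empty intersection, so this case is consistent.

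Next, by the symmetry of the distance, $\bx\in B^d(\bc,R)$ for all $\bc\in C_R(A)$ is equivalent to $C_R(A)\subset B^d(\bx,R)$. This is the second description, and by definition of $C_R$ applied to the set $C_R(A)$ it also equals $C_R(C_R(A))$. Boundedness of $C_R(A)$ holds whenever $A\neq\emptyset$, since $C_R(A)\subset B^d(\ba,R)$.

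\emph{The extremality criterion.} Put $A'=A\setminus\{\bx\}$. First suppose $\bx$ is not $R$-extremal, so $\bx\in\conv_R(A')$. By the second description of \eqref{eqn:polarity}, this means $C_R(A')\subset B^d(\bx,R)$, that is, every $\bc\in C_R(A')$ satisfies $\|\bc-\bx\|\le R$. Such a $\bc$ also satisfies $\|\bc-\ba\|\le R$ for all $\ba\in A'$, so $\bc\in C_R(A)$. Thus $C_R(A')\subset C_R(A)$, and the reverse inclusion holds by inclusion reversal, giving equality.

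Conversely, suppose $C_R(A')=C_R(A)$. Then $C_R(A')=C_R(A)\subset B^d(\bx,R)$, because $\bx\in A$. By \eqref{eqn:polarity} this gives $\bx\in\conv_R(A')$, so $\bx$ is not extremal.

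The only delicate point is the degenerate case where $C_R(A')$ is empty or $A'$ is empty. In that case both sides of the equivalence are handled by the convention $\conv_R=\RR^d$ together with the empty intersection, and I would check this separately.
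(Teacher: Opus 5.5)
Your proof is correct and follows the paper's own route. The paper also reads the three descriptions in \eqref{eqn:polarity} as the single condition $\|\bx-\bc\|\leq R$, once in each variable, and your two-inclusion argument for \eqref{eqn:extremal-iff} is the paper's observation $C_R(A)=C_R(A\setminus\{\bx\})\cap B^d(\bx,R)$ written out. The degenerate cases you flag need no separate check: with the conventions $C_R(\emptyset)=\RR^d$ and $\conv_R=\RR^d$ when $C_R=\emptyset$, the same argument goes through verbatim.
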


The identity \eqref{eqn:polarity} is \cite{AAF:2025}*{Rem.~1.3} and
\cite{MM:2022}*{(2.2) and Prop.~2.2} for $M=RB^d$; see also \cite{BLNP:2007}*{Cor.~3.4}.
Its three descriptions are reformulations of the condition
$\|\bx-\bc\|\leq R$, read once in $\bx$ and once in $\bc$, and
\eqref{eqn:extremal-iff} follows from them by writing
$C_R(A)=C_R(A\setminus\{\bx\})\cap B^d(\bx,R)$.

Thus, $\bx_i$ is $R$-extremal in $A=\{\bx_1,\dotsc,\bx_m\}$ precisely when the
ball $B^d(\bx_i,R)$ is irredundant in the intersection $C_R(A)$.  Deleting one
non-$R$-extremal point of $A$ therefore leaves $\conv_R(A)$ unchanged. Lemma~\ref{lem:R-minkowski} expresses the fact that one can delete all non-$R$-extremal points simultaneously without changing the $R$-ball-convex hull.

\begin{lemma}[$R$-ball-convex Minkowski theorem for finite sets]\label{lem:R-minkowski}
    Let $A=\{\bx_1,\dotsc,\bx_m\}\subset\RR^d$ consist of pairwise distinct
    points with $\inte C_R(A)\neq\emptyset$, and let $S\subset A$ be a set of
    points none of which is $R$-extremal in $A$.  Then
    \begin{equation*}
        C_R(A\setminus S) = C_R(A)
        \qquad\text{and consequently}\qquad
        \conv_R(A\setminus S) = \conv_R(A).
    \end{equation*}
\end{lemma}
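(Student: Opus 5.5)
We have to prove Lemma~\ref{lem:R-minkowski}: deleting all non-$R$-extremal points at once leaves $C_R(A)$ unchanged. By Lemma~\ref{lem:polarity}, $\conv_R(\cdot)=C_R(C_R(\cdot))$, so the second identity follows from the first. Since $C_R$ is inclusion reversing, $C_R(A)\subset C_R(A\setminus S)$ holds trivially. For the reverse inclusion it suffices to show that every ball $B^d(\bx,R)$ with $\bx\in S$ contains $C_R(A\setminus S)$. The plan is to view $C_R(A)$ as an intersection of finitely many balls of equal radius with nonempty interior, a ball-polyhedron. Then we identify redundancy of a ball with the absence of a facet on the boundary.

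\textbf{Step 1 (facet characterisation).} For $\bx\in A$, set $F_\bx:=C_R(A)\cap\partial B^d(\bx,R)$. I claim that $\bx$ is $R$-extremal if and only if $F_\bx$ contains a point $\bp$ with $\|\bp-\by\|<R$ for all $\by\in A\setminus\{\bx\}$.

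Suppose such a $\bp$ exists. Move $\bp$ slightly outward along $\bp-\bx$. The new point stays in every $B^d(\by,R)$ with $\by\neq\bx$, by strict inequality and finiteness of $A$, but leaves $B^d(\bx,R)$. Hence $C_R(A\setminus\{\bx\})\neq C_R(A)$, and \eqref{eqn:extremal-iff} shows that $\bx$ is extremal.

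Conversely, suppose $\bx$ is extremal and pick $\bq\in C_R(A\setminus\{\bx\})\setminus B^d(\bx,R)$. Pick also $\bc\in\inte C_R(A)$, so $\|\bc-\by\|<R$ for all $\by\in A$. The function $t\mapsto\|(1-t)\bc+t\bq-\by\|$ is convex. It is $<R$ at $t=0$ and $\le R$ at $t=1$ for every $\by\ne\bx$, so it is $<R$ on $[0,1)$. For $\by=\bx$ it crosses the value $R$ at some $t_0\in(0,1)$. The point $\bp=(1-t_0)\bc+t_0\bq$ is then the required point of $F_\bx$.

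\textbf{Step 2 (simultaneous deletion).} Suppose some $\bx\in S$ has $B^d(\bx,R)\not\supset C_R(A\setminus S)$. Choose $\bq\in C_R(A\setminus S)\setminus B^d(\bx,R)$. Along the segment from $\bc\in\inte C_R(A)$ to $\bq$, let $t^*$ be the first parameter at which some ball $B^d(\by,R)$ with $\by\in S$ is exited. This is the smallest crossing time among the finitely many balls of $S$ that are not containing $\bq$.

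By the same strict convexity argument, the point $\bp$ at time $t^*$ lies strictly inside all balls indexed by $A\setminus S$. It also lies strictly inside all balls of $S$ other than those attaining $t^*$. If a unique $\by\in S$ attains $t^*$, Step 1 makes $\by$ $R$-extremal, contradicting $\by\in S$.

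\textbf{Main obstacle.} The hard case is ties, where several balls of $S$ are exited simultaneously. I would remove it by perturbing $\bc$ within the open set $\inte C_R(A)$. The set of $\bc$ for which two distinct spheres $\partial B^d(\by,R)$ and $\partial B^d(\by',R)$ are crossed by the segment $[\bc,\bq]$ at the same point is contained in a cone over the $(d-2)$-dimensional set $\partial B^d(\by,R)\cap\partial B^d(\by',R)$ with apex $\bq$. This is a null set, because distinct points give distinct spheres. A generic $\bc$ therefore gives a unique first exit and the contradiction of Step 1. This proves $C_R(A\setminus S)\subset C_R(A)$, and hence both identities.
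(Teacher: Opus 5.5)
Your proof is correct, but it follows a different route from the paper. The paper gives no self-contained argument for this lemma. Via \eqref{eqn:extremal-iff}, it identifies the first identity with the known uniqueness of the reduced family of generating balls of the ball-polyhedron $C_R(A)$, citing \cite{BLNP:2007} and the Krein--Milman/Carath\'eodory-type results of \cite{AAF:2025}. It then obtains the second identity from Lemma~\ref{lem:polarity}, exactly as you do.

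You instead prove that uniqueness directly, by the familiar polytope-style argument, in two parts:
\begin{enumerate}
  \item A ball $B^d(\by,R)$ is irredundant as soon as some point of $C_R(A)$ has that ball as its only active constraint. Only this direction of your Step~1 is actually used; the converse is superfluous.
  \item A first-exit argument along the segment from $\bc\in\inte C_R(A)$ to a witness $\bq\in C_R(A\setminus S)\setminus B^d(\bx,R)$ produces such a point for some $\by\in S$. Here $\bc$ is chosen off the null union of cones with apex $\bq$ over $\partial B^d(\by,R)\cap\partial B^d(\by',R)$.
\end{enumerate}

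What your version buys is a short, elementary, self-contained proof that shows exactly where the two hypotheses enter. The condition $\inte C_R(A)\neq\emptyset$ supplies the strict inequalities on $[\bc,\bq)$ for every ball containing $\bq$. Pairwise distinctness makes the tie set null. The paper's citation instead places the statement in the general theory of ball-bodies, where it holds beyond finite point sets.

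The interior hypothesis cannot be dropped. Take the four vertices of a square inscribed in a circle of radius $R$. Each vertex is individually non-$R$-extremal, since the opposite diagonal pair already forces $C_R=\{\bz\}$. However, removing two adjacent vertices enlarges $C_R$ to a lens.
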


Note that the assumption $\inte C_R(A)\neq\emptyset$ means that the circumradius of $A$ is strictly smaller than
$R$.

For general ball-bodies this belongs to the Krein--Milman- and Carath\'eodory-type
results of Artstein-Avidan and Florentin, see
\cite{AAF:2025}*{Cor.~4.19, Thms.~4.21 and 4.22}, see also Bezdek et al. \cite{BLNP:2007}*{Section~5}.  However, we shall only use this
version for finite sets.

In view of \eqref{eqn:extremal-iff}, the first statement is the uniqueness of the
reduced family of generating balls of the ball-polyhedron $C_R(A)$, see
\cite{BLNP:2007}*{Def.~6.3}. The second statement follows from
Lemma~\ref{lem:polarity}.

In the plane, the duality of Lemma~\ref{lem:polarity} takes a specific form, which follows directly from
\eqref{eqn:polarity} and \eqref{eqn:extremal-iff}:
Assume in $\RR^2$ that $C_R(A)\neq\emptyset$.  Call
$\bv\in C_R(A)$ a \emph{vertex} of $C_R(A)$ if $\|\bv-\bx_i\|=\|\bv-\bx_j\|=R$ for two
indices $i\neq j$, and let $\bv_1,\dotsc,\bv_k$ be the vertices of
$C_R(A)$.  If $k\geq2$, then
\begin{equation}%\label{eqn:f0-vertices}
    f_0(\conv_R A) = k \label{lem:ball-polygon}
    \qquad\text{and}\qquad
    \conv_R(A) = \bigcap_{\ell=1}^{k} B^2(\bv_\ell,R) .
\end{equation}
We will use \eqref{lem:ball-polygon} in Section~\ref{sec:three}.

Combining Lemma~\ref{lem:slicing} with duality \eqref{eqn:polarity} produces a
metric obstruction, which is one of the most important geometric ingredients of our argument.

\begin{proposition}[absorption forces large circumradius]\label{prop:pole}
    Let $3\leq n\leq d+1$, let $A=\{\bx_1,\dotsc,\bx_n\}\subset\RR^d$ be affinely
    independent, and let $\varrho$ be the circumradius of
    $\conv\{\bx_1,\dotsc,\bx_n\}$ within its affine hull.  If
    $\bx_j\in\conv_R\bigl(\{\bx_1,\dotsc,\bx_n\}\setminus\{\bx_j\}\bigr)$
    for some $j$, then $\varrho\geq R$.  Equivalently: if $\varrho<R$, then
    all $n$ points are $R$-extremal and
    $f_0(\conv_R\{\bx_1,\dotsc,\bx_n\})=n$.
\end{proposition}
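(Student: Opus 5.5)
Let $E$ be the affine hull of $A$, of dimension $n-1\geq 2$, and let $\bm\in E$ be the circumcentre of $\conv A$ in $E$, so $\|\bx_i-\bm\|=\varrho$ for all $i$. We prove the contrapositive: assume $\varrho<R$ and show every $\bx_j$ is $R$-extremal. Since $A\subset E$, Lemma~\ref{lem:slicing} and the remark following it let us decide absorption inside $E$. We may therefore assume $E=\RR^{n-1}$, so that $A$ is the vertex set of a full-dimensional simplex in $\RR^{n-1}$ with circumradius $\varrho<R$. By Lemma~\ref{lem:polarity}, specifically \eqref{eqn:polarity}, $\bx_j\in\conv_R(A\setminus\{\bx_j\})$ holds iff $\|\bx_j-\bc\|\leq R$ for every admissible centre $\bc\in C_R(A\setminus\{\bx_j\})$. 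It therefore suffices to exhibit one $\bc\in E$ with
\[
\|\bx_i-\bc\|\leq R \text{ for all } i\neq j, \qquad \|\bx_j-\bc\|>R .
\]

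To construct $\bc$, let $F=A\setminus\{\bx_j\}$. The points of $F$ are affinely independent and span a hyperplane $H$ of $E$. The points of $E$ equidistant from all points of $F$ form the line $\lambda$ through $\bm$ orthogonal to $H$. Write $\bu$ for the unit normal of $H$ pointing away from $\bx_j$, and set $\bc_t=\bm+t\bu$ for $t\geq 0$. Then $\|\bx_i-\bc_t\|=:s(t)$ is the same for all $i\neq j$. Writing $h$ for the signed distance of $\bm$ from $H$ and $\rho_F$ for the circumradius of $F$ in $H$, we have $s(t)^2=\rho_F^2+(h+t)^2$, continuous and tending to $\infty$, with $s(0)=\varrho<R$. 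Choose $t^*>0$ with $s(t^*)=R$ and take $\bc=\bc_{t^*}$; then all points of $F$ lie on $\partial B(\bc,R)$.

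It remains to show $\|\bx_j-\bc\|>R$. By construction the function $g(\bx)=\|\bx-\bc_t\|^2-\|\bx-\bm\|^2=-2t\langle \bx-\bm,\bu\rangle+t^2$ is affine in $\bx$. For $i\neq j$ it equals $s(t)^2-\varrho^2$, so on $H$ we have $\langle\bx-\bm,\bu\rangle=-h'$, a constant. Since $\bx_j$ lies strictly on the opposite side of $H$ from $\bu$, we get $\langle \bx_j-\bm,\bu\rangle<\langle \bx_i-\bm,\bu\rangle$ for $i\neq j$, hence
\[
\|\bx_j-\bc_t\|^2-\varrho^2>\|\bx_i-\bc_t\|^2-\varrho^2 \quad\text{for } t>0 .
\]
At $t=t^*$ this gives $\|\bx_j-\bc\|>R$. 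So $\bc$ is an admissible centre for $F$ whose ball misses $\bx_j$, and $\bx_j$ is not absorbed. Since $j$ was arbitrary, all $n$ points are $R$-extremal and $f_0=n$.

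The step that needs the most care is checking that Lemma~\ref{lem:slicing} legitimately reduces the problem to $E$. That lemma requires $A$ to lie in some ball of radius $R$, which holds because $\varrho<R$. We also need that $C_R^E$ is what \eqref{eqn:polarity} uses once we work in $E$. An alternative is to avoid slicing altogether: the centre $\bc$ built above already lies in $\RR^d$, and $\bc\in C_R(F)$ with $\|\bx_j-\bc\|>R$ directly shows $\bx_j\notin\conv_R(F)$ by the definition \eqref{eqn:convR-def}. This makes the argument immediate without reducing dimension, and it is the route we would actually take.
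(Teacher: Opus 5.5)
Your proof is correct. It differs from the paper's only in where you place the separating centre.

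The paper puts $\bc=t\bz+(1-t)\bx_j$ on the ray from $\bx_j$ through the circumcentre $\bz$, beyond $\bz$. The remaining points are then at different distances from $\bc$, so the paper must control the worst of them through $\beta=\varrho^{-2}\max_{i\neq j}\langle\bz-\bx_j,\bz-\bx_i\rangle$. It takes $t$ as the root of the resulting quadratic and checks, with a small case distinction, that $t>R/\varrho$.

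You instead move $\bz$ along the normal $\bu$ of the facet hyperplane $H$ opposite $\bx_j$. This keeps all points of $A\setminus\{\bx_j\}$ equidistant from $\bc_t$, so only one distance $s(t)$ has to be tuned to $R$. The separation of $\bx_j$ then follows from the single affine identity $\|\bx-\bc_t\|^2-\|\bx-\bz\|^2=t^2-2t\langle\bx-\bz,\bu\rangle$, with no auxiliary parameter like $\beta$. The intermediate value theorem is not needed either: $t^*=-h+\sqrt{R^2-\rho_F^2}$ is the unique positive root, since $h^2=\varrho^2-\rho_F^2<R^2-\rho_F^2$.

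Your centre also has a clear geometric meaning. It is a point of $E$ at distance exactly $R$ from every point of $A\setminus\{\bx_j\}$. For $n=3$ it is precisely the far centre $\bc_-$ from the proof of Lemma~\ref{lem:structure}(i), so your argument is the higher-dimensional version of that planar picture. The paper's route, in exchange, gives $\bc$ by a closed formula in $\bz$, $\bx_j$ and $\beta$ alone, without introducing $H$ and its normal.

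As you note yourself, the reduction to $E$ via Lemma~\ref{lem:slicing} is unnecessary. A ball $B^d(\bc,R)\supset A\setminus\{\bx_j\}$ with $\bx_j\notin B^d(\bc,R)$ already gives $\bx_j\notin\conv_R(A\setminus\{\bx_j\})$ directly from \eqref{eqn:convR-def}, which is exactly how the paper concludes.
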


\begin{proof}
    Since $\bx_1,\dotsc,\bx_n$ are affinely independent points, $\conv(A)$ is an $(n-1)$-dimensional simplex in the affine hull of $A$. This simplex has a unique circumcentre $\bz$ and circumradius $\varrho$, such that $\|\bz-\bx_i\|=\varrho$ for all $i\in \{1,\dotsc,n\}$.

    We prove the contrapositive statement, so assume that $\varrho<R$.
    We fix $j\in\{1,\dotsc,n\}$ arbitrarily and show that there exists a ball $B^d(\bc, R)$ such that $A\setminus \{\bx_j\} \subset B^d(\bc,R)$ and $\bx_j\not\in B^d(\bc,R)$, which yields $\bx_j\not\in \conv_R(A\setminus\{\bx_j\})$ and therefore $\bx_j$ is $R$-extremal in $A$. Since $j\in \{1,\dotsc,n\}$ was arbitrary, this shows $f_0(\conv_R(A)) = n$.

    Set
    \begin{equation*}
        \beta := \frac{1}{\varrho^2} \, \max_{i\neq j} \Bigl\{ \langle\bz-\bx_j, \bz-\bx_i\rangle\Bigr\} \in [-1,1].
    \end{equation*}
    Note that $\beta<1$, since otherwise there would be an index $i\neq j$ such that $\bx_i=\bx_j$.
    Since $(R/\varrho)^2-(1-\beta^2)\geq (R/\varrho)^2 - 1 >0$, we set
    \begin{equation*}
        \bc:=t\bz + (1-t)\, \bx_j, \quad \text{for}\quad t:= (1-\beta) + \sqrt{(R/\varrho)^2-(1-\beta^2)}.
    \end{equation*}
    We claim that $t>R/\varrho$. Indeed, if $(1-\beta) > R/\varrho$, then $t\geq (1-\beta)> R/\varrho$, and if $(1-\beta)\leq R/\varrho$, then
    \begin{equation*}
        t > \frac{R}{\varrho}
        \; \Leftrightarrow \; \frac{R^2}{\varrho^2} - (1-\beta^2) > \Bigl(\frac{R}{\varrho} - (1-\beta)\Bigr)^2
        \; \Leftrightarrow \; -(1+\beta) > (1-\beta) - 2\frac{R}{\varrho}
        \; \Leftrightarrow \; R > \varrho.
    \end{equation*}
    Using $t>R/\varrho >1$, we find $\|\bc-\bx_j\| = t \varrho > R$ so $\bx_j\notin B^d(\bc,R)$, and
    for all $i\neq j$,
    \begin{align*}
        \|\bc-\bx_i\|^2
         & = \|(\bz-\bx_i) + (t-1)(\bz-\bx_j)\|^2                                \\
         & = \varrho^2 (1+(t-1)^2) + 2(t-1)\, \langle\bz-\bx_i, \bz-\bx_j\rangle \\
         & \leq \varrho^2\, \Bigl( 1+(t-1)^2+2(t-1)\beta\Bigr)
        = R^2,
    \end{align*}
    which shows that $\bx_i\in B^d(\bc,R)$.
\end{proof}

\section{Efron--Buchta identities}\label{sec:Efron-Buchta}

The Efron--Buchta identities \cites{Buchta:2005,Buchta:2023} carry over to the
$R$-ball-convex setting: this is stated in Theorem~\ref{thm:R-buchta}.  The combinatorial part of Buchta's argument applies
verbatim, the only geometric input that has to be replaced is Minkowski's
theorem $\conv A=\conv(\ext A)$, whose $R$-ball-convex counterpart for finite
sets is Lemma~\ref{lem:R-minkowski}.  What is needed there is not merely that a
single non-$R$-extremal point may be deleted, but that all of them may be
deleted simultaneously.

Despite the similarity in the arguments, we give a proof of Theorem~\ref{thm:R-buchta} for completeness.

\begin{proof}[Proof of Theorem~\ref{thm:R-buchta}]
    Recall that $0<r\leq R$ and $K\subset\RR^d$ is an $r$-ball-convex body.
    Since $K$ is $r$-ball-convex it is an intersection of balls of radius $r$,
    so $K\subset B^d(\bc,r)$ for some $\bc\in\RR^d$ and hence
    $\inte K\subset\inte B^d(\bc,r)$.  Almost surely the points
    $\bX_1,\dotsc,\bX_M$ lie in $\inte K$ and are pairwise distinct. Then
    $\|\bX_j-\bc\|<r\leq R$, so that $\bc\in\inte B^d(\bX_j,R)$ for every $j$ and
    therefore $\bc\in\inte C_R(\{\bX_1,\dotsc,\bX_M\})$.  Consequently,
    we have $C_R(\{\bX_1,\dotsc,\bX_M\})\neq \emptyset$ almost surely
    for all $M\geq 1$.

    Fix $n,m\geq1$ and let $M:=n+m$ and $A:=\{\bX_1,\dotsc,\bX_{n+m}\}$.  We
    claim that, almost surely,
    \begin{equation}\label{eqn:key-event}
        \begin{split}
            \bigl\{\bX_{n+1},\dotsc,\bX_{n+m}\bigr\}\subset [K]_n^R
            \iff \text{no $\bX_{n+\ell}$, $\ell\in[m]$, is $R$-extremal in $A$.}
        \end{split}
    \end{equation}
    Indeed, if $\bX_{n+\ell}\in[K]_n^R=\conv_R\{\bX_1,\dotsc,\bX_n\}$ for every
    $\ell$, then $\bX_{n+\ell}\in\conv_R(A\setminus\{\bX_{n+\ell}\})$ by the
    monotonicity of $\conv_R$, so no $\bX_{n+\ell}$ is $R$-extremal.
    Conversely, if no $\bX_{n+\ell}$ is $R$-extremal, then
    Lemma~\ref{lem:R-minkowski} applied with
    $S=\{\bX_{n+1},\dotsc,\bX_{n+m}\}$ gives
    $[K]_n^R=\conv_R(A\setminus S)=\conv_R(A)\supset S$.  This is the point at
    which the simultaneous deletion of all non-$R$-extremal points is used.

    Since $[K]_n^R\subset K$ by
    Lemma~\ref{lem:elementary}\,(c) and $\bX_1,\dotsc,\bX_n$ are independent, we have
    \begin{equation}\label{eqn:fubini}
        \frac{\EE\vol_d([K]_n^R)^m}{\vol_d(K)^m}
        = \PP\bigl(\bX_{n+1},\dotsc,\bX_{n+m}\in[K]_n^R\bigr).
    \end{equation}
    In order to prove \eqref{eqn:R-buchta-forward}, fix $n\geq2$ and $i\in\{2,\dotsc,n\}$ and
    apply \eqref{eqn:fubini} with $n$ replaced by $i$ and $m$ by $n-i$.  Its
    left-hand side is $\EE\vol_d([K]_i^R)^{n-i}/\vol_d(K)^{n-i}$, and by
    \eqref{eqn:key-event} its right-hand side is the probability that a fixed
    set of $n-i$ of the $n$ labels contains no point that is $R$-extremal in
    $\{\bX_1,\dotsc,\bX_n\}$.  Given $f_0([K]_n^R)=k$, the $k$ extremal points
    form a uniformly distributed $k$-element subset by exchangeability, so this
    conditional probability equals $\binom{n-k}{n-i}\big/\binom{n}{n-i}$.
    Since $\binom{n}{n-i}=\binom{n}{i}$, summing over $k$ yields \eqref{eqn:R-buchta-forward}.
    Note that only the terms with $2\leq k\leq i$
    contribute, because $\binom{n-k}{n-i}=0$ for $k>i$.

    Finally, \eqref{eqn:efron-buchta} follows from \eqref{eqn:R-buchta-forward} by
    the binomial inversion of \cite{Buchta:2005}*{Thm.\ 2}, using
    $\binom{n}{i}\binom{n-i}{n-k}=\binom{n}{k}\binom{k}{i}$. The summation
    starts at $i=2$ because $\vol_d([K]_1^R)=0$.
\end{proof}

We note that \eqref{eqn:R-buchta-forward} is equivalent to
\begin{equation*}%\label{eqn:R-buchta-moments}
    \frac{\EE\vol_d([K]_n^R)^m}{\vol_d(K)^m}
    = \EE \prod_{\ell=1}^m \Bigl(1- \frac{f_0([K]_{n+m}^R)}{n+\ell}\Bigr)
    \qquad\text{for all $n,m\geq1$}
\end{equation*}
(cf. the first statement of Theorem~1 and its proof in \cite{Buchta:2005}), which yields for $m=1$ the $R$-ball-convex version of Efron's identity, that is, under the same hypotheses as in Theorem~\ref{thm:R-buchta}, we have the following:
\begin{equation}\label{eqn:R-efron}
    \EE f_0([K]_{n+1}^R) = (n+1) \biggl( 1 - \frac{\EE\vol_d([K]_n^R)}{\vol_d(K)} \biggr).
\end{equation}
This was already derived for $d=2$ in \cite{FKV:2014}*{(5.10)}.

Moreover, for example, \eqref{eqn:efron-buchta} yields
\begin{align}
    \label{eqn:pn2}
    p_{n,2}^R(K) & = \binom{n}{2} \frac{\EE\vol_d([K]_{2}^R)^{n-2}}{\vol_d(K)^{n-2}},                     \\
    \label{eqn:pn3}
    p_{n,3}^R(K) & = \binom{n}{3} \frac{\EE\vol_d([K]_{3}^R)^{n-3}}{\vol_d(K)^{n-3}}-(n-2)\,p_{n,2}^R(K),
\end{align}
and, at the other end of the range,
\begin{equation}\label{eqn:pnn}
    p_{n,n}^R(K) = 1+\sum_{i=2}^{n-1} (-1)^{n-i} \binom{n}{i}
    \frac{\EE\vol_d([K]_{i}^R)^{n-i}}{\vol_d(K)^{n-i}}.
\end{equation}

We also note that due to the monotonicity of the $R$-ball-convex hull with respect to containment, if $r\leq R\leq R'$ then $f_0([K]_n^R)\leq f_0([K]_n^{R'})\leq f_0([K]_n)$ holds pointwise, and for
$k=2,\dotsc,n$,
\begin{equation*}
    \sum_{j=2}^{k} p_{n,j}^R(K) = \PP\bigl(f_0([K]_n^R)\leq k\bigr)
    \geq \PP\bigl(f_0([K]_n^{R'})\leq k\bigr) = \sum_{j=2}^{k} p_{n,j}^{R'}(K).
\end{equation*}
Moreover $f_0([K]_n^R)\to f_0([K]_n)$ almost surely as $R\to\infty$, so
that $\PP(f_0([K]_n^R)\leq k)$ decreases to $\PP(f_0([K]_n)\leq k)$ for
every $k$.  In particular, $p_{n,2}^R(K)$ decreases to $0$ and $p_{n,n}^R(K)$,
given by \eqref{eqn:pnn}, increases to $p_{n,n}(K)$ as $R\to\infty$.  However, the
argument does not give information on the individual probabilities
$p_{n,k}^R(K)$ with $2<k<n$, which are differences of two non-increasing
functions of $R$, see also Question~\ref{qu:monotone-p43}.

\begin{remark}[Efron-type identities for Poisson polytopes]%\label{rem:LM-poisson}
    Beermann and Reitzner \cite{BR:2015}*{Thm.~1} showed that, for Poisson
    polytopes, the Efron--Buchta identities can be strengthened to an identity between
    the generating function of the number of non-vertices and the moment
    generating function of the measure of the Poisson polytope.
    Very recently, Last and Molchanov \cite{LM:2026} obtained Efron-type
    identities for the hulls of Poisson processes generated by general
    stopping sets, see also \cite{LM:2025}, with the ball-convex hull as one
    of their examples \cite{LM:2026}*{Ex.~8.3}.  Their identities relate the
    number of Poisson points on the boundary of the hull to the intensity
    mass of the complement of the hull, and include joint distributions and
    variances.  In particular, \cite{LM:2026}*{Cor.~4.3} applied to the
    ball-convex hull of a Poisson process with uniform intensity on $K$ is
    the Poissonised form of \eqref{eqn:R-efron}, and
    \cite{LM:2026}*{Thm.~4.5} generalises the identity of Beermann and
    Reitzner.  However, as they remark \cite{LM:2026}*{Rem.~4.4}, their
    results do not seem to produce a Poissonised version of the higher-moment
    identities of Buchta \cites{Buchta:2005,Buchta:2023}.
\end{remark}

\section{Moments of the volume of a random ball-segment in a ball}\label{sec:two}

We now compute the moments of the volume of the $R$-ball-convex hull of two
independent, uniformly distributed random points in a Euclidean ball of radius
$r$, where $0<r\leq R$.  We first determine the volume of the standard
\emph{$R$-ball-segment}, or \emph{$R$-segment} for short,
$\mathrm{Sg}_R(\ell)=\conv_R\{-\ell\be_1,\ell\be_1\}\subset\RR^d$, the
$R$-ball-convex hull of two points at distance $2\ell\leq 2R$.
In the terminology of \cite{BLNP:2007} this is the $R$-spindle of the two
points $\pm\ell\be_1$.  For $d=2$ it may also be called a lens; for $d\geq 3$,
however, a lens commonly refers to the intersection of two balls, whereas the
$R$-segment is a body of revolution that arises as the intersection of infinitely
many $R$-balls.  For $d=3$ its boundary surface is classically known as
Kepler's lemon: rotating a circular arc about the chord joining its endpoints,
Kepler \cite{Kep1615} obtained an apple or a lemon according to whether the
arc is larger or smaller than a semicircle, and for $\ell\leq R$ the generating
arc of $\mathrm{Sg}_R(\ell)$ is at most a semicircle.

\subsection{Volume of a ball-segment}

Let $\kappa_{d}:=\vol_{d}(B^{d})$ and let $\ibeta(a,b;z)$ denote the incomplete beta function defined by
\begin{equation*}
    \ibeta(a,b;z) := \int_{0}^z t^{a-1} (1-t)^{b-1}\dint t, \quad \text{$a,b>0$ and $0\leq z \leq 1$},
\end{equation*}
and $\ibeta(a,b):=\ibeta(a,b;1)$.

\begin{lemma}\label{lem:segment-volume}
    Let $0<\ell\leq R$. Then
    \begin{align}
        \vol_d(\mathrm{Sg}_R(\ell))
        %& = R^d \vol_d\Bigl(\mathrm{Sg}_1\Bigl(\frac{\ell}{R}\Bigr)\Bigr)
        = 2\kappa_{d-1} \int_{0}^{\ell} \Bigl(\sqrt{R^2-s^2}-\sqrt{R^2-\ell^2}\Bigr)^{d-1}\, \dint s.
        \label{eqn:segment-vol}
    \end{align}
\end{lemma}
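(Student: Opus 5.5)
The plan is to determine $\mathrm{Sg}_R(\ell)=\conv_R\{-\ell\be_1,\ell\be_1\}$ explicitly as a body of revolution about the $\be_1$-axis, and then integrate the areas of its cross-sections orthogonal to $\be_1$.

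\emph{Step 1: the planar segment.} In the plane $E=\mathrm{span}\{\be_1,\be_2\}$, the admissible centres $C_R^E(\{\pm\ell\be_1\})$ form the lens $B^2(-\ell\be_1,R)\cap B^2(\ell\be_1,R)$. Its two vertices are $\pm h\be_2$ with $h:=\sqrt{R^2-\ell^2}$. By \eqref{lem:ball-polygon}, $\conv_R^E$ of the two points is $B^2(h\be_2,R)\cap B^2(-h\be_2,R)$. This gives the symmetric lens of Figure~\ref{fig:segment}.

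\emph{Step 2: passing to $\RR^d$.} By Lemma~\ref{lem:slicing}, every $2$-plane $E$ through the $\be_1$-axis meets $\conv_R\{\pm\ell\be_1\}$ in exactly this lens. Any point of $\RR^d$ lies in such a plane, so $\mathrm{Sg}_R(\ell)$ is the solid of revolution obtained by rotating this lens about the $\be_1$-axis. A direct check confirms this. The ball $B^d(\bc,R)$ contains both points exactly when $\bc$ lies in the $(d-1)$-sphere-bounded region $C_R$. The intersection over all such $\bc$ with $\bc\perp\be_1$, $\|\bc\|=h$, is the rotated lens. The remaining centres give no further restriction, by the planar statement.

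\emph{Step 3: the cross-sections.} For $|s|\le\ell$, the lens meets the line $x_1=s$ in the set $|x_2|\le \sqrt{R^2-s^2}-h$. This is the distance from the far centre $\mp h\be_2$ to the arc, minus $h$. Hence the cross-section of $\mathrm{Sg}_R(\ell)$ at $x_1=s$ is a $(d-1)$-ball of radius $\sqrt{R^2-s^2}-\sqrt{R^2-\ell^2}$, with volume $\kappa_{d-1}(\sqrt{R^2-s^2}-\sqrt{R^2-\ell^2})^{d-1}$.

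\emph{Step 4: integration.} By Fubini, and using the symmetry $s\mapsto -s$, the volume is twice the integral of these cross-section volumes over $[0,\ell]$, which is exactly \eqref{eqn:segment-vol}.

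The only real obstacle is Step 2: justifying that the $d$-dimensional hull is precisely the rotated planar lens. The cleanest route is to invoke Lemma~\ref{lem:slicing} for each plane containing the axis, together with rotational invariance from Lemma~\ref{lem:elementary}(b). This avoids any direct analysis of $C_R$ in $\RR^d$.
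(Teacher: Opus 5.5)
Your argument is correct and is essentially the paper's. The paper first rescales to $R=1$ via \eqref{eqn:equivariance}, which is inessential. It then uses that $\mathrm{Sg}_R(\ell)$ is a body of revolution about the line through its endpoints (a consequence of Lemma~\ref{lem:slicing}), and integrates by Fubini the $(d-1)$-ball cross-sections of radius $\sqrt{R^2-s^2}-\sqrt{R^2-\ell^2}$.

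One small point concerns the boundary case $\ell=R$. There $C_R^E(\{\pm\ell\be_1\})=\{\mathbf{0}\}$ has only one vertex, so \eqref{lem:ball-polygon} does not literally apply. Instead, \eqref{eqn:polarity} directly gives $\conv_R^E\{\pm R\be_1\}=B^2(\mathbf{0},R)$, which agrees with your formula for $h=0$.
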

\begin{proof}
    By the homogeneity property in \eqref{eqn:equivariance} we find
    \begin{equation*}
        \vol_d(\mathrm{Sg}_R(\ell))
        = R^d \vol_d\biggl(\conv_1\Bigl\{\pm \frac{\ell}{R}\, \be_1\Bigr\}\biggr)
        = R^d \vol_d\biggl(\mathrm{Sg}_1\Bigl(\frac{\ell}{R}\Bigr)\biggr).
    \end{equation*}
    Therefore, it is sufficient to calculate $\vol_d(\mathrm{Sg}_1(\ell))$ for $0<\ell\leq 1$. Since the ball-segment is a body of revolution, we obtain by Fubini's theorem
    \begin{equation*}
        \vol_d(\mathrm{Sg}_1(\ell))
        = \int_{-\ell}^\ell \vol_{d-1}(\rho_{\ell}(s)B^{d-1}) \, \dint s
        = 2\kappa_{d-1} \int_{0}^\ell \rho_{\ell}(s)^{d-1}\,\dint s,
    \end{equation*}
    where
    \begin{equation*}
        \rho_{\ell}(s) := \sqrt{1-s^2}-\sqrt{1-\ell^2},
    \end{equation*}
    is the radius of the $(d-1)$-dimensional ball that is the intersection of $\mathrm{Sg}_1(\ell)$ with the hyperplane $\be_1^\bot+s\be_1$.
\end{proof}

For $\gamma := \arcsin\frac{\ell}{R} \in [0,\frac{\pi}{2}]$, Lemma \ref{lem:segment-volume} gives
\begin{equation}\label{eqn:segment-volume-Gd}
    \vol_d(\mathrm{Sg}_R(\ell))
    = 2\kappa_{d-1} R^d \int_{0}^{\sin\gamma} \Bigl(\sqrt{1-s^2}-\cos\gamma\Bigr)^{d-1}\,\dint s
    = R^d G_d(\gamma),
\end{equation}
where, using the substitution $s=\sin\varphi$, we define
\begin{equation*}%\label{eqn:GH}
    G_d(\gamma)  := 2\kappa_{d-1}\int_{0}^{\gamma}
    \bigl(\cos\varphi-\cos\gamma\bigr)^{d-1}\cos\varphi\,\dint\varphi
    = \vol_d\bigl(\mathrm{Sg}_1(\sin\gamma)\bigr).
\end{equation*}
In particular, for $d=2$, we find \eqref{eqn:segment-2d}.

We note that $R\mapsto V_R(\ell)$ is strictly decreasing and convex on $[\ell,\infty)$ \cite{AAF:2026}. Moreover,
\begin{align}\notag
    V_R(\ell) & = \pi\ell^2 - 2(2\ell)^{\frac 32} \sqrt{R-\ell} + O(R-\ell) \qquad \text{for $R\downarrow \ell$},
    \intertext{and}
    V_R(\ell) & = \frac{4\ell^3}{3R} +O(R^{-3}) \qquad \text{for $R\to \infty$}.
    \label{eqn:segment-2d-asymp}
\end{align}

\subsection{The random half-distance}

By Hammersley \cite{H50}, see also Lord \cites{L54a,L54b}, the \emph{random half distance}
\begin{equation*}
    L_{rB^d}:=\frac{1}{2}\|\bX-\bY\| \in [0,r]
\end{equation*}
of two independent uniform random points $\bX,\bY$ in $rB^d$ has density
\begin{align}\label{eqn:dist-density}
    f_{d,r}(\ell) = \frac{1}{r}f_{d,1}\Bigl(\frac{\ell}{r}\Bigr)
     & =\frac{d\,2^{d}\,\kappa_{d-1}}{\kappa_d}\, \ell^{d-1}r^{-d}\,
    \ibeta\biggl(\frac{d+1}{2}, \frac 12; 1-\frac{\ell^2}{r^2}\biggr),
\end{align}
for $\ell\in[0,r]$, and moments
\begin{equation}\label{eqn:L-moments}
    \EE L_{rB^d}^k = r^k \EE L_{B^d}^k
    = r^k \, \frac{d\,2^d\, \kappa_{d-1}}{(d+k)\,\kappa_d}\, \ibeta\Bigl(\frac{d+k+1}{2}, \frac{d+1}{2}\Bigr),
\end{equation}
see Hammersley \cite{H50}*{eqn.~(21)}.

\begin{remark}[a beta factorization]%\label{rem:beta-product}
    The law of $L_{B^d}$ belongs to the beta-product family. Applying the
    Legendre duplication formula to \eqref{eqn:L-moments} gives
    \begin{equation*}
        \EE L_{B^d}^{2s}
        = \frac{\bigl(\frac d2\bigr)_s\,\bigl(\frac{d+1}{2}\bigr)_s}
        {\bigl(\frac d2+1\bigr)_s\,(d+1)_s},
        \qquad s>0,
    \end{equation*}
    where $(a)_s=\Gamma(a+s)/\Gamma(a)$, and hence, by moment determinacy,
    $L_{B^d}^2$ has the same distribution as $B_1B_2$ for independent
    $B_1\sim\operatorname{Beta}\bigl(\frac d2,1\bigr)$ and
    $B_2\sim\operatorname{Beta}\bigl(\frac{d+1}{2},\frac{d+1}{2}\bigr)$.
    Probabilistically, this reflects Kingman's random secant decomposition
    \cite{K69}: conditionally on the line spanned by $\bX$ and $\bY$, the
    half-distance factors as $L_{B^d}=hT$ with
    $T\sim\operatorname{Beta}(d,2)$ independent of the half-chord length $h$
    of the secant, whose square satisfies
    $h^2\sim\operatorname{Beta}\bigl(\frac{d+3}{2},\frac{d-1}{2}\bigr)$.
\end{remark}

Note that the substitution $t=\sin^2\varphi$ transforms the incomplete beta function in \eqref{eqn:dist-density} to
\begin{equation*}
    \ibeta\biggl(\frac{d+1}{2}, \frac 12; 1-\frac{\ell^2}{r^2}\biggr)
    = \int_{0}^{1-\frac{\ell^2}{r^2}} t^{\frac{d-1}{2}} (1-t)^{-\frac{1}{2}} \, \dint t
    = 2 \int_{0}^{\arccos\frac{\ell}{r}} \sin^d \varphi\,\dint\varphi.
\end{equation*}
Thus, for $\alpha:=\arccos\frac{\ell}{r}$, we find
\begin{equation}\label{eqn:density-segment-trig}
    f_{d,r}(\ell) = \frac{d\,2^{d+1}\,\kappa_{d-1}}{\kappa_d}\, r^{-1}\,
    (\cos \alpha(\ell))^{d-1}\, H_d(\alpha(\ell)),
\end{equation}
where we define
\begin{equation}\label{eqn:def-Hd}
    H_d(\alpha) := \int_{0}^\alpha \sin^d\varphi\,\dint\varphi
    \qquad \text{for $\alpha\in [0,\tfrac{\pi}{2}]$.}
\end{equation}

\subsection{Moments of the volume of a random ball-segment}

We are interested in the moments of the volume functional of the $R$-segment generated by two random points in $rB^d$, that is,
\begin{equation}\label{eqn:segment-scaling}
    \EE\vol_d\bigl([rB^d]_2^R\bigr)^k = \EE\vol_d(\mathrm{Sg}_R(L_{rB^d}))^k
    = R^{dk} \; \EE\vol_d(\mathrm{Sg}_{1}(\tfrac{r}{R}L_{B^d}))^k.
\end{equation}
We give a closed form for these moments by using Hammersley's density \eqref{eqn:dist-density}.

\begin{proposition}\label{prop:k-segment-moments}
    Let $d\geq 2$, $k\in\NN_0$ and $0<r\leq R$. Then
    \begin{align}
        \EE\vol_d\Bigl(\mathrm{Sg}_R\bigl(L_{rB^d}\bigr)\Bigr)^{\!k}
         & = \frac{2^{d+1}\, d\,\kappa_{d-1}}{\kappa_d}\; R^{dk}
        \int_{0}^{\frac{\pi}{2}} G_d\bigl(\gamma(\alpha)\bigr)^{\!k}\, H_d(\alpha)\,
        \cos^{d-1}\!\alpha\;\sin \alpha \, \dint \alpha,
        \label{eqn:k-vol_d-segment-trig}
    \end{align}
    where $\gamma(\alpha):=\arcsin\bigl(\eta\cos \alpha\bigr)$ with $\eta:=\frac{r}{R}\in(0,1]$.
\end{proposition}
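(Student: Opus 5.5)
The plan is to write the moment as a one-dimensional integral against Hammersley's density and then change variables from $\ell$ to the angle $\alpha=\arccos(\ell/r)$.

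First, by \eqref{eqn:segment-scaling} and \eqref{eqn:segment-volume-Gd},
\begin{equation*}
\EE\vol_d\bigl(\mathrm{Sg}_R(L_{rB^d})\bigr)^k=\int_0^r \vol_d(\mathrm{Sg}_R(\ell))^k f_{d,r}(\ell)\,\dint\ell = R^{dk}\int_0^r G_d\bigl(\arcsin\tfrac{\ell}{R}\bigr)^k f_{d,r}(\ell)\,\dint\ell .
\end{equation*}
This uses $\ell\le r\le R$, so that \eqref{eqn:segment-volume-Gd} applies for every $\ell$ in the support of $L_{rB^d}$. The case $k=0$ is just the normalisation of the density, and it is covered by the same computation.

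Next, I substitute the trigonometric form \eqref{eqn:density-segment-trig} of the density,
\begin{equation*}
f_{d,r}(\ell)=\frac{d\,2^{d+1}\kappa_{d-1}}{\kappa_d}\,r^{-1}\cos^{d-1}\alpha\,H_d(\alpha),\qquad \ell=r\cos\alpha .
\end{equation*}
As $\ell$ runs over $[0,r]$, the angle $\alpha$ runs over $[\tfrac{\pi}{2},0]$, and $\dint\ell=-r\sin\alpha\,\dint\alpha$. Reversing the orientation absorbs the minus sign, and the factor $r$ from $\dint\ell$ cancels the $r^{-1}$ in the density. Under this substitution, $\arcsin(\ell/R)=\arcsin(\eta\cos\alpha)=\gamma(\alpha)$ with $\eta=r/R$. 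Collecting the constants gives exactly \eqref{eqn:k-vol_d-segment-trig}.

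The only point needing care is the bookkeeping. One check is that the constant in \eqref{eqn:density-segment-trig} correctly carries the factor $2$ coming from the identity $\ibeta=2\int\sin^d$. Another is that the ratio $\ell^{d-1}r^{-d}$ in \eqref{eqn:dist-density} becomes $r^{-1}\cos^{d-1}\alpha$. Both are immediate from the displayed formulas. No convergence issues arise, since the integrand is bounded and continuous on $[0,\tfrac{\pi}{2}]$.
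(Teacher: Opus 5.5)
Your proof is correct and follows essentially the paper's argument: you write the moment against Hammersley's density, use $\vol_d(\mathrm{Sg}_R(\ell))=R^dG_d(\arcsin\frac{\ell}{R})$, and change variables via \eqref{eqn:density-segment-trig}. The only cosmetic difference is that the paper first rescales to the unit ball by homogeneity and substitutes $\cos\alpha=\ell\in[0,1]$ in $f_{d,1}$, whereas you substitute $\ell=r\cos\alpha$ directly in $f_{d,r}$ and let the factors of $r$ cancel.
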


\begin{proof}
    By homogeneity \eqref{eqn:equivariance} and \eqref{eqn:segment-volume-Gd}
    \begin{align*}
        \EE\vol_d\Bigl(\mathrm{Sg}_R\bigl(L_{rB^d}\bigr)\Bigr)^{\!k}
         & = R^{dk}\,\EE\vol_d\Bigl(\mathrm{Sg}_1\bigl(\tfrac{r}{R}L_{B^d}\bigr)\Bigr)^{\!k}  \notag      \\
         & = R^{dk}\int_{0}^{1} G_d\Bigl(\arcsin \frac{r\ell}{R}\Bigr)^{\!k} f_{d,1}(\ell)\, \dint \ell .
    \end{align*}
    With the substitutions $\sin \gamma = \frac{r\ell}{R}$ and $\cos\alpha=\ell$, and applying
    \eqref{eqn:density-segment-trig}, we find \eqref{eqn:k-vol_d-segment-trig}.
\end{proof}

The integral in \eqref{eqn:def-Hd} is elementary in every fixed dimension.  For $d=2$, \eqref{eqn:density-segment-trig} yields
\begin{align}
    f_{2,r}(\ell)
     & =\frac{16}{\pi} \ell r^{-2}\Biggl (\arccos\Bigl(\frac{\ell}{r}\Bigr)
    -\frac{\ell}{r}\sqrt{1-\frac{\ell^2}{r^2}}\Biggr ),\label{eqn:dist-density-2d}
\end{align}
and \eqref{eqn:k-vol_d-segment-trig} gives
\begin{align}
    \EE\vol_2([B^2]_2^R)^k
     & =\EE V_R(L_{B^2})^k
    =\frac{4}{\pi}R^{2k}\int_0^{\frac{\pi}{2}} \bigl (2\gamma-\sin2\gamma\bigr )^{\!k}
    \bigl(2\alpha-\sin 2\alpha\bigr)\,(\sin 2\alpha)\, \dint \alpha,
    \label{eqn:k-moment}
\end{align}
for all $R\geq 1$, where $\cos \alpha= \ell$ and  $\sin \gamma = \frac{\ell}{R}$.

In particular, \eqref{eqn:k-moment} with $k=1$ and $k=2$ evaluates the first two
of the three integrals
\begin{equation}\label{eqn:m1m2}
    m_1(R) := \EE\vol_2\bigl([B^2]_2^R\bigr)
    \qquad\text{and}\qquad
    m_2(R) := \EE\vol_2\bigl([B^2]_2^R\bigr)^2
\end{equation}
that carry the whole dependence on $R$ in
Theorem~\ref{thm:sylvester-r-convex}; the third, the mixed moment
$\mu(R)=\EE[\Delta_R(L_{B^2})V_R(L_{B^2})]$, is calculated in
Section~\ref{sec:three}.

At the smallest admissible radius $R=1$ the integral \eqref{eqn:k-moment}
degenerates and can be evaluated in closed form. Indeed, $\sin\gamma=\cos \alpha$
forces $\gamma=\frac{\pi}{2}-\alpha$, so that $2\gamma-\sin2\gamma=\pi-2\alpha-\sin2\alpha$,
and the substitution $u=2\alpha$ turns \eqref{eqn:k-moment} into
\begin{equation}\label{eqn:k-moment-at-one}
    \EE\vol_2\bigl([B^2]_2^1\bigr)^k
    = \frac{2}{\pi}\int_{0}^{\pi}\bigl(\pi-u-\sin u\bigr)^{k}
    \bigl(u-\sin u\bigr)\sin u\,\dint u ,
\end{equation}
whose integrand is a polynomial in $u$, $\sin u$ and $\cos u$.  Evaluating
\eqref{eqn:k-moment-at-one} for $k=1$ and $k=2$ yields
\begin{equation}\label{eqn:m1m2-at-one}
    m_1(1) = \frac{32}{3\pi}-\pi \approx 0.25372
    \qquad\text{and}\qquad
    m_2(1) = \frac{81-8\pi^2}{12} \approx 0.17026 .
\end{equation}

Notice also that for every fixed $d$ and $k$, the integrand of \eqref{eqn:k-vol_d-segment-trig}
is elementary: $G_d$ and $H_d$ are, by expansion of the binomial
$(\cos\varphi-\cos\gamma)^{d-1}$, finite combinations of the elementary
integrals $\int_0^\gamma\cos^j\varphi\,\dint\varphi$ and
$\int_0^a\sin^d\varphi\,\dint\varphi$.

\begin{corollary}\label{cor:pn2}
    Under the hypotheses of Proposition~\ref{prop:k-segment-moments},
    we find, by \eqref{eqn:pn2} and
    \eqref{eqn:segment-scaling},
    \begin{align*}
        p_{n,2}^R(rB^d)  =
        \binom{n}{2} \frac{\EE\vol_d([rB^d]_2^R)^{n-2}}{\vol_d(rB^d)^{n-2}}
        =p_{n,2}^1\Bigl(\frac{r}{R}B^d\Bigr) \leq p_{n,2}^1(B^d),
    \end{align*}
    the inequality because $p_{n,2}^1(\eta B^d)=p_{n,2}^{1/\eta}(B^d)$
    by \eqref{eqn:equivariance}, with $1/\eta\geq1$, and because
    $R\mapsto p_{n,2}^R$ is non-increasing, see the discussion following \eqref{eqn:pnn}.
    In particular, for the two-dimensional disc
    $B^2$ and $R\geq 1$ we obtain, in the notation \eqref{eqn:m1m2},
    \begin{align*}
        p_{3,2}^R(B^2) = \frac{3}{\pi}\, m_1(R)   & \leq p_{3,2}^1(B^2) = \frac{32}{\pi^2} - 3 \approx 24.2278\%, \\
        \intertext{and}
        p_{4,2}^R(B^2) = \frac{6}{\pi^2}\, m_2(R) & \leq p_{4,2}^1(B^2) = \frac{81}{2\pi^2}-4 \approx 10.3508\%.
    \end{align*}
\end{corollary}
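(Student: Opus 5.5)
The corollary has three parts: the identity for $p_{n,2}^R(rB^d)$, the monotonicity inequality, and the explicit planar values. I take them in that order.

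\emph{The identity.} The first equality is \eqref{eqn:pn2} with $K=rB^d$. This is legitimate because $rB^d$ is $r$-ball-convex and $r\leq R$, so Theorem~\ref{thm:R-buchta} applies.

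For the second equality I use the equivariance \eqref{eqn:equivariance} with $t=1/R$. It maps the uniform sample in $rB^d$ to a uniform sample in $\eta B^d$, where $\eta=r/R$, and maps $\conv_R$ to $\conv_1$. The vertex count $f_0$ is preserved under this scaling, so $p_{n,k}^R(rB^d)=p_{n,k}^1(\eta B^d)$. Scaling instead by $t=1/\eta$ gives $p_{n,k}^1(\eta B^d)=p_{n,k}^{1/\eta}(B^d)$. Alternatively, both equalities can be read off from \eqref{eqn:segment-scaling}: the volume ratio $\EE\vol_d^{n-2}/\vol_d(K)^{n-2}$ is scale invariant.

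\emph{The inequality.} Since $1/\eta\geq1$, it follows from the discussion after \eqref{eqn:pnn}. There, $\{f_0([K]_n^R)\leq 2\}$ is non-increasing in $R$, because $f_0$ is pointwise non-decreasing in $R$. That discussion is applied with $K=B^2$ (or $B^d$) and $r=1$.

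\emph{The planar specialisation.} Put $d=2$, $r=1$. By \eqref{eqn:segment-2d} and \eqref{eqn:m1m2} we have $\EE\vol_2([B^2]_2^R)^{k}=m_k(R)$. With $n=3$ and $n=4$ this gives
\[
p_{3,2}^R(B^2)=3m_1(R)/\pi,\qquad p_{4,2}^R(B^2)=6m_2(R)/\pi^2 .
\]
The bounds are the values at $R=1$, obtained by inserting \eqref{eqn:m1m2-at-one}:
\[
\frac{3}{\pi}\Bigl(\frac{32}{3\pi}-\pi\Bigr)=\frac{32}{\pi^2}-3,
\qquad
\frac{6}{\pi^2}\cdot\frac{81-8\pi^2}{12}=\frac{81}{2\pi^2}-4 .
\]

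The only real work is computing $m_1(1)$ and $m_2(1)$ from \eqref{eqn:k-moment-at-one}. For these I expand $(\pi-u-\sin u)^k(u-\sin u)\sin u$ and integrate the terms $u^j\sin^a u$ over $[0,\pi]$ by parts. This is the main obstacle, but it is routine bookkeeping. As a check, the results should be consistent with $p_{3,2}\approx 0.2423$.
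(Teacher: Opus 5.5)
Your proposal is correct and follows the paper's argument essentially step for step: \eqref{eqn:pn2} gives the first equality, scale equivariance \eqref{eqn:equivariance} (equivalently \eqref{eqn:segment-scaling}) gives $p_{n,2}^R(rB^d)=p_{n,2}^1(\eta B^d)=p_{n,2}^{1/\eta}(B^d)$, monotonicity in $R$ gives the inequality, and the $R=1$ moments give the planar constants. The evaluation of \eqref{eqn:k-moment-at-one} that you call the main obstacle is indeed routine and yields $m_1(1)=\frac{32}{3\pi}-\pi$ and $m_2(1)=\frac{81-8\pi^2}{12}$, which the paper already records in \eqref{eqn:m1m2-at-one}, so you can simply cite those values.
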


The moment that will be needed in Section~\ref{sec:three} is the second one,
$k=2$, in the planar case $d=2$, $r=1$.  By \eqref{eqn:k-moment} we have, with
$\alpha=\frac\pi 2-\theta$,
\begin{align*}
    m_2(R) & = \EE \vol_2\bigl([B^2]_2^R\bigr)^2 = \EE V_R(L_{B^2})^2.
\end{align*}
By \eqref{eqn:dist-density-2d} and inserting the expansion \eqref{eqn:segment-2d-asymp} with $\ell=\sin\theta$, we obtain
\begin{equation}\label{eqn:m2-asymptotics}
    m_2(R)  = \frac{A}{R^2} \bigl(1+O(R^{-2})\bigr),
\end{equation}
for $R\to \infty$, where
\begin{align*}
    A & = \frac{64}{9\pi} \int_{0}^{\frac{\pi}{2}} (\sin\theta)^6 (\pi-2\theta-\sin 2\theta)(\sin 2\theta)\, \dint\theta
    =\frac{7}{72}.
\end{align*}

\section{The \texorpdfstring{$R$}{R}-ball-convex hull of three points}
\label{sec:three}

For $0\leq\ell\leq R$ we abbreviate
\begin{equation*}%\label{eqn:gamma-h}
    \begin{split}
         & h := \sqrt{R^2-\ell^2},
        \qquad
        \gamma := \arcsin\frac{\ell}{R}\in\Bigl[0,\tfrac{\pi}{2}\Bigr], \\
         & \qquad\text{so that}\qquad
        \sin\gamma=\frac{\ell}{R},\quad \cos\gamma=\frac{h}{R} .
    \end{split}
\end{equation*}
Recall from \eqref{eqn:segment-2d} and \eqref{eqn:def-Delta-R} that
$V_R(\ell) = \vol_2\bigl(\mathrm{Sg}_R(\ell)\bigr) = 2R^2\gamma - 2\Delta_R(\ell)$,
and $\Delta_R(\ell)=\ell h$.
Differentiation gives
\begin{equation}\label{eqn:V-derivative}
    V_R'(\ell) = \frac{4\ell^2}{\sqrt{R^2-\ell^2}}\,,
    \qquad 0\leq\ell<R,
\end{equation}
so $V_R$ is non-negative, strictly increasing, and convex on $[0,R]$.
Note that for $d\geq 2$ \cite{AAF:2026}*{Prop.\ 3.2} determines the convexity of the map $\bx\in\RR^d\mapsto \vol_d(\conv_R\{\bx,\bx_0\})$ for fixed $\bx_0\in\RR^d$,
which for $d=2$ also implies that $V_R(\ell) = \vol_2(\conv_R\{-\ell\be_1,\ell\be_1\})$ is non-decreasing and convex.

\subsection{The combinatorial structure of \texorpdfstring{$\conv_R$}{conv-R} of three points}

The whole calculation is organised by the following elementary but
decisive observation: the circumradius alone decides everything.  The two cases
are drawn in Figure~\ref{fig:structure}.

\begin{lemma}\label{lem:structure}
    Let $\bx_1,\bx_2,\bx_3\in\RR^2$ be affinely independent points that are
    contained in some closed disc of radius $R>0$.  Let
    $T:=\conv\{\bx_1,\bx_2,\bx_3\}$, let $\varrho$ be the circumradius of $T$,
    let $\ell_{ij}:=\tfrac12\|\bx_i-\bx_j\|$, and let $\theta_j$ denote the angle of
    $T$ at $\bx_j$.  Then, for $\{i,j,k\}=\{1,2,3\}$ the following hold.
    \begin{sloppypar}
        \begin{enumerate}
            \item[(i)] $\bx_j \in \conv_R\{\bx_i,\bx_k\}$ if and only if
                  $\theta_j\geq \pi/2$ and $\varrho\geq R$.
            \item[(ii)] If $\varrho<R$, then $f_0(\conv_R\{\bx_1,\bx_2,\bx_3\})=3$,
                  the boundary of $\conv_R\{\bx_1,\bx_2,\bx_3\}$ is the union of
                  three arcs of radius $R$ meeting pairwise at
                  $\bx_1,\bx_2,\bx_3$ at interior angles $<\pi$, and
                  \begin{equation}\label{eqn:rtriangle}
                      \vol_2\bigl(\conv_R\{\bx_1,\bx_2,\bx_3\}\bigr)
                      = \vol_2(T) + \frac{1}{2}\sum_{1\leq i<j\leq 3} V_R(\ell_{ij}).
                  \end{equation}
            \item[(iii)] If $\varrho>R$, then $T$ has an obtuse angle at, say, $\bx_j$ and
                  \begin{equation*}
                      \conv_R\{\bx_1,\bx_2,\bx_3\} = \conv_R\{\bx_i,\bx_k\}
                      = \mathrm{Sg}_R\bigl(\ell_{ik}\bigr),
                  \end{equation*}
                  where $\ell_{ik}=\max\{\ell_{12},\ell_{13},\ell_{23}\}$.  In particular
                  $f_0(\conv_R\{\bx_1,\bx_2,\bx_3\})=2$ and
                  $\vol_2(\conv_R\{\bx_1,\bx_2,\bx_3\}) = V_R(\ell_{ik})$.
        \end{enumerate}
    \end{sloppypar}
    Since $\varrho$ does not depend on $j$, and since a triangle has at most one
    angle $\geq\pi/2$, the three events that $\bx_j \in \conv_R\{\bx_i,\bx_k\}$ in \textup{(i)} are pairwise disjoint.
\end{lemma}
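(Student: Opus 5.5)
The plan is to work entirely with the duality of Lemma~\ref{lem:polarity}. By \eqref{eqn:polarity}, $\bx_j\in\conv_R\{\bx_i,\bx_k\}$ holds if and only if $C_R\{\bx_i,\bx_k\}\subset B^2(\bx_j,R)$. Here $C_R\{\bx_i,\bx_k\}=B^2(\bx_i,R)\cap B^2(\bx_k,R)$ is a lens, which is the $R$-segment rotated by $\pi/2$. Its two corner points are the centres $\bc_\pm$ at distance $h_{ik}=\sqrt{R^2-\ell_{ik}^2}$ from the midpoint $\bm$ of $\bx_i\bx_k$, on the perpendicular bisector.

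Since the lens is the $R$-ball-convex hull of $\{\bc_+,\bc_-\}$ (by \eqref{eqn:polarity} applied to $\{\bc_\pm\}$, or directly), and $B^2(\bx_j,R)$ is $R$-ball-convex, containment of the lens is equivalent to $\|\bx_j-\bc_\pm\|\le R$ for both corners. So part (i) reduces to the criterion
\[
\bx_j\in\conv_R\{\bx_i,\bx_k\}\iff \|\bx_j-\bc_+\|\le R \text{ and } \|\bx_j-\bc_-\|\le R .
\]
Now $\bx_j,\bx_i,\bx_k$ all lie in $B^2(\bc_\pm,R)$ exactly when $\bx_j$ lies in the disc $D_\pm$ whose boundary circle passes through $\bx_i,\bx_k,\bc_\mp$. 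I will reparametrise in terms of the circumcircle of $T$. The circles of radius $R$ through $\bx_i,\bx_k$ have centres $\bc_\pm$. The point $\bx_j$ lies in both discs $B^2(\bc_\pm,R)$ iff it lies in the closed $R$-lens $\mathrm{Sg}$ spanned by $\bx_i,\bx_k$. That lens is bounded by arcs of circles of radius $R$ through $\bx_i,\bx_k$.

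Elementary inscribed-angle geometry then finishes (i). A point $\bx_j$ on the same side as $\bc_-$ lies inside $B^2(\bc_+,R)$ iff the angle $\theta_j$ it subtends over $\bx_i\bx_k$ is at least the inscribed angle $\pi-\gamma'$ of the minor arc, where $\sin\gamma'=\ell_{ik}/R$. By the law of sines, $\sin\theta_j=\ell_{ik}/\varrho$, so the condition becomes $\theta_j\ge\pi/2$ and $\sin\theta_j\le \ell_{ik}/R$, i.e.\ $\varrho\ge R$. The other disc $B^2(\bc_-,R)$ contains that whole side automatically, since $\ell_{ik}\le R$. This is the step I expect to need the most care: the side of the chord, the minor versus major arc, and the boundary case $\theta_j=\pi/2$ must be tracked consistently. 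Disjointness of the three events then follows because at most one angle is $\ge\pi/2$.

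For (ii), Proposition~\ref{prop:pole} with $n=3$ gives $f_0=3$. By \eqref{lem:ball-polygon}, $C_R$ of the three points has three vertices $\bc_{ik}$, namely the centres of the circles of radius $R$ through the pairs on the side of the third point. Hence $\conv_R$ is the intersection of three discs, bounded by three arcs through the $\bx$'s. Interior angles $<\pi$ follow from each arc being a minor arc. The area is $T$ plus three circular caps, each of area half the lens, $\tfrac12V_R(\ell_{ik})$, by the symmetry of $\mathrm{Sg}_R$. The one point to verify is that the caps lie outside $T$, which holds because each centre $\bc_{ik}$ is on the same side as the opposite vertex when $\varrho<R$.

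For (iii), $\varrho>R$ together with containment in a disc of radius $R$ forces an obtuse angle. An acute or right triangle has its minimal enclosing disc equal to its circumdisc, of radius $\varrho>R$, which is impossible. At the obtuse vertex $\bx_j$, part (i) gives $\bx_j\in\conv_R\{\bx_i,\bx_k\}$. Then $\conv_R$ of all three equals $\mathrm{Sg}_R(\ell_{ik})$ by Lemma~\ref{lem:elementary}, with $\ell_{ik}$ the longest side, and $f_0=2$.
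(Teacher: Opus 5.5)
Your argument is essentially the paper's. In (i) both proofs reduce $\bx_j\in\conv_R\{\bx_i,\bx_k\}$ to $\bx_j\in B^2(\bc_+,R)\cap B^2(\bc_-,R)$ and finish with the inscribed-angle theorem and the law of sines. In (ii) both use \eqref{lem:ball-polygon} and split the hull into $T$ plus three minor caps, and (iii) is the same minimal-enclosing-disc argument.

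The deviations are cosmetic. Your duality preamble in (i) only arrives at the paper's starting point, and to do so it uses the same two-disc description of an $R$-segment, now applied to the dual pair $\{\bc_+,\bc_-\}$. Incidentally, $C_R\{\bx_i,\bx_k\}$ is a rotated copy of $\mathrm{Sg}_R(h_{ik})$, not of $\mathrm{Sg}_R(\ell_{ik})$. You may as well start directly from $\conv_R\{\bx_i,\bx_k\}=B^2(\bc_+,R)\cap B^2(\bc_-,R)$. Getting $f_0=3$ in (ii) from Proposition~\ref{prop:pole} instead of from (i) is equally valid.

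Several individual sentences are wrong or unsupported, although none breaks the main line.

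\emph{The disc $D_\pm$.} You claim $\bx_j\in B^2(\bc_\pm,R)$ iff $\bx_j$ lies in a disc $D_\pm$ whose boundary passes through $\bx_i,\bx_k,\bc_\mp$. This is false: $\partial B^2(\bc_\pm,R)$ passes through $\bc_\mp$ only when $2h_{ik}=R$, with $h_{ik}=\sqrt{R^2-\ell_{ik}^2}$. The sentence is never used, so delete it.

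\emph{The other disc in (i).} $B^2(\bc_-,R)$ does not contain a whole half-plane, and $\ell_{ik}\le R$ is not the reason for what you need. What is true is that every point on $\bc_-$'s side of the chord is at least as close to $\bc_-$ as to $\bc_+$. Hence the part of $B^2(\bc_+,R)$ on that side lies in $B^2(\bc_-,R)$.

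\emph{Interior angles in (ii).} \emph{Minor arcs} do not give interior angles $<\pi$. The angle at $\bx_j$ is $\theta_j+\gamma_{ij}+\gamma_{jk}$ with $\sin\gamma_{ij}=\ell_{ij}/R$, and minor arcs only give $\gamma_{ij}\le\pi/2$. What you need is $\gamma_{ij}<\theta_k$ and $\gamma_{jk}<\theta_i$.
\begin{itemize}
\item If $\theta_k\le\pi/2$, this follows from $\sin\gamma_{ij}=\ell_{ij}/R<\ell_{ij}/\varrho=\sin\theta_k$; if $\theta_k>\pi/2$ it is trivial.
\item Alternatively, the hull is convex and the inward normals $\bc_{ij}-\bx_j$ and $\bc_{jk}-\bx_j$ at $\bx_j$ differ. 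Indeed, $\bc_{ij}=\bc_{jk}$ would be a point at distance $R$ from all three vertices, forcing $\varrho=R$.
\end{itemize}

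\emph{Side of the centres in (ii).} That each $\bc_{ik}$ lies on the same side of $\operatorname{aff}\{\bx_i,\bx_k\}$ as $\bx_j$ needs its one-line justification from (i). Otherwise $\|\bx_j-\bc_{ik}\|\le R$, together with the closeness remark above, would put $\bx_j$ in $\conv_R\{\bx_i,\bx_k\}$. This contradicts (i), since $\varrho<R$.
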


\begin{figure}[t]
    \centering
    \resizebox{\linewidth}{!}{%
        \begin{tikzpicture}[scale=2.75, line cap=round, line join=round]

            %% ---- Panel (a): rho < R ----
            \begin{scope}
                \coordinate (x1) at (-0.2084, 1.1818);
                \coordinate (x2) at (-0.9830,-0.6883);
                \coordinate (x3) at ( 1.1276,-0.4104);
                \coordinate (O)  at (0,0);

                \fill[blue!15]
                (x1) arc[start angle=127.100, end angle=187.900, radius=2]
                arc[start angle=-114.655, end angle=-50.345, radius=2]
                arc[start angle=8.694,   end angle=71.306,  radius=2]
                -- cycle;
                \fill[blue!5] (x1) -- (x2) -- (x3) -- cycle;

                \draw[dashed, gray] (O) circle[radius=1.2];
                \draw[gray, shorten >=1pt] (O) -- ($(O)+(120:1.2)$) node[midway, right=-1pt, black] {$\varrho$};
                \fill[gray] (O) circle[radius=0.7pt];

                \draw[thick] (x1) arc[start angle=127.100, end angle=187.900, radius=2];
                \draw[thick] (x2) arc[start angle=-114.655, end angle=-50.345, radius=2];
                \draw[thick] (x3) arc[start angle=8.694,   end angle=71.306,  radius=2];

                \draw (x1) -- (x2);
                \draw (x2) -- (x3) node[midway, below=-1pt] {$2\ell_{23}$};
                \draw (x3) -- (x1);

                \draw ($(x2)+(7.5:0.25)$) arc[start angle=7.5, end angle=67.5, radius=0.25];
                \node at ($(x2)+(37.5:0.42)$) {$\theta_2$};

                \fill (x1) circle[radius=1.pt] node[above=1pt] {$\bx_1$};
                \fill (x2) circle[radius=1.pt] node[below left=-1pt] {$\bx_2$};
                \fill (x3) circle[radius=1.pt] node[below right=-2pt] {$\bx_3$};

                % segment label, next to the bulge over l_13 (no arrow)
                \node[font=\small, rotate=-50] at (0.572,0.479) {$\tfrac12 V_R(\ell_{13})$};
                % hull label, inside the triangle
                \node[font=\small] at (0.05,-0.35) {$\conv_R\{\bx_1,\bx_2,\bx_3\}$};

                \node at (0,-1.62) {(a)\; $\varrho<R$:\ $f_0=3$};
            \end{scope}

            %% ---- Panel (b): rho > R ----
            \begin{scope}[xshift=3.55cm, yshift=0.35cm]
                \coordinate (y1) at (-0.9,0);
                \coordinate (y2) at ( 0.9,0);
                \coordinate (y3) at ( 0.15,0.33);
                \coordinate (cm) at (0,-0.4359);
                \coordinate (O2) at (0,-1.0282);

                \fill[blue!15]
                (y1) arc[start angle=154.158, end angle=25.842,   radius=1]
                arc[start angle=-25.842, end angle=-154.158, radius=1]
                -- cycle;
                \draw[dotted, gray] (cm) circle[radius=1];

                \fill[white,opacity=0.5] (y1) -- (y2) -- (y3) -- cycle;

                \draw[gray, shorten >=1pt] (cm) -- ($(cm)+(240:1)$) node[pos=1, below left=-3pt, black] {$R$};
                \fill[gray] (cm) circle[radius=0.8pt];

                \draw[thick] (y1) arc[start angle=154.158, end angle=25.842,   radius=1];
                \draw[thick] (y2) arc[start angle=-25.842, end angle=-154.158, radius=1];

                % upper part of the circumcircle only
                \draw[dashed, gray] ($(O2)+(148:1.3664)$) arc[start angle=148, end angle=32, radius=1.3664];
                \draw[gray, shorten >=1pt] (O2) -- ($(O2)+(35:1.3664)$) node[midway, below right=-2.5pt, black] {$\varrho$};
                \fill[gray] (O2) circle[radius=0.8pt];

                \draw (y1) -- (y3) -- (y2);
                \draw (y1) -- (y2) node[pos=0.72, below=0pt] {$2\ell_{12}$};
                \draw ($(y3)+(-162.56:0.15)$) arc[start angle=-162.56, end angle=-23.75, radius=0.15];
                \node[font=\small] at ($(y3)+(-95:0.25)$) {$\theta_3$};

                \fill (y1) circle[radius=1.pt] node[left=1pt] {$\bx_1$};
                \fill (y2) circle[radius=1.pt] node[right=1pt] {$\bx_2$};
                \fill (y3) circle[radius=1.pt] node[above=1.5pt] {$\bx_3$};

                % ball-segment label, above the ball-segment (no arrow)
                \node[font=\small] at (-0.30,-0.20) {$\mathrm{Sg}_R\!\bigl(\ell_{12}\bigr)$};

                \node at (0,-1.97) {(b)\; $\varrho>R$:\ $f_0=2$};
            \end{scope}

        \end{tikzpicture}%
    }
    \caption{The two cases of Lemma~\ref{lem:structure}, decided by the
        circumradius $\varrho$.
    }
    \label{fig:structure}
\end{figure}

\begin{proof}
    (i) Put $\ell:=\ell_{ik}$ and assume, after a rigid motion, that
    $\bx_i=-\ell\be_1$, $\bx_k=\ell\be_1$ and that $\bx_j$ lies in the upper
    half-plane.  Since the three points are in a common $R$-disc, we have
    $\ell\leq R$, and
    \begin{equation*}
        \mathrm{Sg}_R(\ell) = B^2(\bc_-,R)\cap B^2(\bc_+,R),
        \qquad\text{with } \bc_\pm := \pm\sqrt{R^2-\ell^2}\,\be_2 .
    \end{equation*}
    Since $\bx_j$ is in the upper half-plane, it is clear that $\bx_j\in\mathrm{Sg}_R(\ell)$ if and only if $\bx_j\in B^2(\bc_-,R)$.
    By the
    inscribed angle theorem, for $\beta\in(0,\pi)$ the locus
    $\{\bp=(p_1, p_2)\colon p_2>0,\ \angle\,\bx_i\bp\,\bx_k=\beta\}$ is a circular arc through
    $\bx_i$ and $\bx_k$. These arcs foliate the open upper half-plane, and the
    circular segments they determine are nested, and decreasing in $\beta$.  For
    $\beta=\pi-\gamma$ the arc is $\partial B^2(\bc_-,R)\cap\{p_2>0\}$ and the
    corresponding segment is $B^2(\bc_-,R)\cap\{p_2>0\}$.  Hence,
    \begin{equation*}
        \bx_j\in B^2(\bc_-,R)\iff \theta_j\geq \pi-\gamma
        \iff \Big(\theta_j\geq \tfrac{\pi}{2}\ \text{ and }\ \sin\theta_j\leq \sin\gamma = \tfrac{\ell}{R}\Big).
    \end{equation*}
    (For the second equivalence note that $\theta_j\geq\pi-\gamma$ forces
    $\theta_j\geq\pi/2$, and that on $[\pi/2,\pi)$ the condition
    $\theta_j\geq\pi-\gamma$ reads $\pi-\theta_j\leq\gamma$, both sides lying in
    $[0,\pi/2]$.)  By the law of sines $\varrho=\ell/\sin\theta_j$, so
    $\sin\theta_j\leq \ell/R$ is equivalent to $\varrho\geq R$.  This proves (i).

    (ii) Let $\varrho<R$.  By (i) no $\bx_j$ lies in the $R$-ball-segment of the
    other two, so all three points are $R$-extremal and $f_0=3$.  In particular
    $A:=\{\bx_1,\bx_2,\bx_3\}$ lies in an open $R$-disc, i.e.\
    $\inte C_R(A)\neq\emptyset$, so \eqref{lem:ball-polygon} applies and
    $C_R(A)$ has exactly $f_0=3$ vertices $\bc_{12},\bc_{13},\bc_{23}$, labelled
    so that $\|\bc_{ij}-\bx_i\|=\|\bc_{ij}-\bx_j\|=R$. Here
    $\|\bc_{ij}-\bx_k\|<R$, since equality would make $\bc_{ij}$ the
    circumcentre of $T$ and force $\varrho=R$.
    Writing $D_{ij}:=B^2(\bc_{ij},R)$ for the unique $R$-disc through $\bx_i$ and
    $\bx_j$ that contains $\bx_k$ in its interior, \eqref{lem:ball-polygon}
    gives
    \begin{equation*}
        \conv_R\{\bx_1,\bx_2,\bx_3\} = D_{12}\cap D_{13}\cap D_{23}.
    \end{equation*}
    The centre $\bc_{ij}$ lies on the same side of the line through
    $\bx_i,\bx_j$ as $\bx_k$: placing $\bx_i=-\ell\be_1$, $\bx_j=\ell\be_1$ with
    $\ell=\ell_{ij}$ and $\bx_k=(a,b)$, $b>0$, the disc of radius $R$ with centre $(0,-h)$ cannot contain $\bx_k$ as $\bx_k\notin \mathrm{Sg}_R(\ell)$.
    Consequently, $D_{ij}$ cuts off, on the side of
    $\mathrm{aff} \{\bx_i,\bx_j\}$ not containing $\bx_k$, the \emph{minor} circular segment,
    of area $\tfrac12V_R(\ell_{ij})$ because $\mathrm{Sg}_R(\ell_{ij})$ is the
    union of two congruent such segments.  As $D_{12}\cap D_{13}\cap D_{23}$ is
    the union of $T$ and of these three pairwise non-overlapping segments, one
    over each side of $T$, \eqref{eqn:rtriangle} follows; the boundary
    accordingly consists of the three arcs $\partial D_{ij}\cap\partial(D_{12}\cap
        D_{13}\cap D_{23})$, which meet at $\bx_1,\bx_2,\bx_3$ at interior angles
    strictly between the corresponding angle $\theta_j$ of $T$ and $\pi$.

    (iii) If $T$ were non-obtuse, then the smallest enclosing disc of
    $\{\bx_1,\bx_2,\bx_3\}$ would be the circumdisc, whence $\varrho\leq R$ by
    hypothesis. So $T$ is obtuse at, say, $\bx_j$, and $\ell_{ik}$ is the longest
    side.  By (i), $\bx_j\in\conv_R\{\bx_i,\bx_k\}$, so $\bx_j$ is not
    $R$-extremal and $\conv_R\{\bx_1,\bx_2,\bx_3\}=\conv_R\{\bx_i,\bx_k\}$.
\end{proof}

The excluded case $\varrho=R$ occurs with probability $0$.

\subsection{The distribution of the circumradius}%\label{sec:circumradius-d}

Lemma~\ref{lem:structure} identifies the event $\{f_0([K]_3^R)=2\}$ with a
purely metric event, and thereby identifies Sylvester's three-point problem for
the $R$-ball-convex hull with the distribution of the circumradius.
We denote by $\varrho([B^d]_n)$ the random circumradius of $n$ independent uniform random points $\bX_1,\dotsc,\bX_n$ in $B^d$ taken within the almost surely $(n-1)$-dimensional affine subspace they span.

\begin{theorem}[circumradius of random points]\label{thm:circumradius-d}
    For every $3\leq n \leq d+1$ and $R\geq1$,
    \begin{equation}\label{eqn:circumradius-n}
        \PP(\varrho([B^d]_n)>R) \geq 1-p_{n,n}^R(B^d).
    \end{equation}
    Moreover, for $n=3$ we have
    \begin{equation}\label{eqn:circumradius-d}
        \PP(\varrho([B^d]_3)>R) = p_{3,2}^R(B^d) = \frac{3}{\kappa_d}\,\EE \vol_d\!\bigl([B^d]_2^R\bigr).
    \end{equation}
    As $R\to\infty$,
    \begin{equation}\label{eqn:circumradius-d-tail}
        \begin{split}
            \PP(\varrho([B^d]_3)>R) & = \frac{C_d}{R^{d-1}}\Bigl(1+O\!\bigl(R^{-2}\bigr)\Bigr), \\
            C_d                     & := \frac{6d}{3d-1}\,\frac{\kappa_{d-1}^2}{\kappa_d^2}\,
            \ibeta\bigl(d,\tfrac12\bigr)\, \ibeta\Bigl(\tfrac{3d}{2},\tfrac{d+1}{2}\Bigr).
        \end{split}
    \end{equation}
    In particular $C_2=\frac{1024}{525\pi^2}$ and $C_3=\frac{3}{55}$.
\end{theorem}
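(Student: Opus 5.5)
The plan has three parts, one for each claim of the theorem.

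\emph{Inequality \eqref{eqn:circumradius-n}.} This follows directly from Proposition~\ref{prop:pole}. For $3\leq n\leq d+1$, the points $\bX_1,\dotsc,\bX_n$ are almost surely affinely independent and pairwise distinct. By Proposition~\ref{prop:pole}, the event $\{\varrho([B^d]_n)<R\}$ is contained in $\{f_0([B^d]_n^R)=n\}$. The event $\{\varrho=R\}$ has probability zero, since the circumradius is a nonconstant real-analytic function of the points. Taking complements gives $\PP(\varrho>R)\geq 1-p_{n,n}^R(B^d)$.

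\emph{Identity \eqref{eqn:circumradius-d}.} For $n=3$ I would reduce to the plane through Lemma~\ref{lem:slicing}. The three points almost surely span a $2$-plane $E$, and $B^d\cap E$ is a disc of radius at most $1\leq R$, so the points lie in an $R$-disc of $E$. By Lemma~\ref{lem:slicing}, absorption is decided inside $E$, and $f_0$ computed in $\RR^d$ equals $f_0$ computed with $\conv_R^E$. Lemma~\ref{lem:structure}(i)--(iii), applied in $E$, then shows that $f_0=2$ holds exactly when $\varrho\geq R$, with $\varrho$ the circumradius within $E$. Up to a null set this gives $\PP(\varrho>R)=p_{3,2}^R(B^d)$. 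The second equality is \eqref{eqn:pn2} with $n=3$ and $K=B^d$, which is $1$-ball-convex, so Theorem~\ref{thm:R-buchta} applies for $R\geq1$.

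\emph{Tail asymptotics \eqref{eqn:circumradius-d-tail}.} Here I would use Proposition~\ref{prop:k-segment-moments} with $k=1$ and $r=1$. The small-$\gamma$ behaviour of the segment volume is
\[
G_d(\gamma)=2\kappa_{d-1}\int_0^\gamma(\cos\varphi-\cos\gamma)^{d-1}\cos\varphi\,\dint\varphi .
\]
In the original variables, $\vol_d(\mathrm{Sg}_R(\ell))=2\kappa_{d-1}\int_0^\ell(\sqrt{R^2-s^2}-\sqrt{R^2-\ell^2})^{d-1}\dint s$. For large $R$,
\[
\sqrt{R^2-s^2}-\sqrt{R^2-\ell^2}=\frac{\ell^2-s^2}{2R}\bigl(1+O(R^{-2})\bigr)
\]
uniformly for $0\leq s\leq\ell\leq1$; the error is even in $1/R$. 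Substituting $s=\ell t$ gives
\[
\vol_d(\mathrm{Sg}_R(\ell))=\frac{2\kappa_{d-1}\ell^{2d-1}}{(2R)^{d-1}}\int_0^1(1-t^2)^{d-1}\dint t\,\bigl(1+O(R^{-2})\bigr),
\]
and $\int_0^1(1-t^2)^{d-1}\dint t=\tfrac12\ibeta(\tfrac12,d)$. Therefore
\[
\PP(\varrho>R)=\frac{3}{\kappa_d}\cdot\frac{\kappa_{d-1}\ibeta(\tfrac12,d)}{2^{d-1}R^{d-1}}\,\EE L_{B^d}^{2d-1}\,\bigl(1+O(R^{-2})\bigr).
\]
Inserting \eqref{eqn:L-moments} with $k=2d-1$ gives
\[
\EE L_{B^d}^{2d-1}=\frac{d\,2^d\kappa_{d-1}}{(3d-1)\kappa_d}\,\ibeta\Bigl(d,\tfrac{d+1}{2}\Bigr).
\]
This does not match the stated $\ibeta(\tfrac{3d}{2},\tfrac{d+1}{2})$ literally, so the main obstacle is reconciling the beta arguments with Hammersley's normalisation in \eqref{eqn:L-moments}, i.e.\ checking $(d+k+1)/2$ at $k=2d-1$. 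I would first verify it for $d=2$ directly from \eqref{eqn:dist-density-2d}, where \eqref{eqn:segment-2d-asymp} gives leading term $\frac{3}{\pi}\cdot\frac43\EE L^3/R$. The constant should be $C_2=\frac{1024}{525\pi^2}$, consistent with the value $A=7/72$ computed for $m_2$ in the same way.

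Finally, I would evaluate $C_2$ and $C_3$ from the beta values: $\ibeta(2,\tfrac12)=\tfrac43$, $\ibeta(3,\tfrac12)=\tfrac{16}{15}$, $\kappa_1=2$, $\kappa_2=\pi$, $\kappa_3=\tfrac43\pi$. The uniformity of the $O(R^{-2})$ error follows because $\ell\leq1$ is bounded, so dominated convergence applies.
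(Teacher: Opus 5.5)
Your plan is the paper's own proof. Part one uses Proposition~\ref{prop:pole} together with $\PP(\varrho=R)=0$. Part two applies Lemma~\ref{lem:structure} in the plane $E$ via Lemma~\ref{lem:slicing} and then uses \eqref{eqn:pn2}. Part three inserts the uniform expansion $\sqrt{R^2-s^2}-\sqrt{R^2-\ell^2}=\frac{\ell^2-s^2}{2R}\bigl(1+O(R^{-2})\bigr)$ into \eqref{eqn:segment-vol}.

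The one thing to correct is the ``main obstacle'' you flag, which is an arithmetic slip of your own. In \eqref{eqn:L-moments} with $k=2d-1$, the first beta argument is $\frac{d+k+1}{2}=\frac{d+2d}{2}=\frac{3d}{2}$, not $d$. Hence $\EE L_{B^d}^{2d-1}=\frac{d\,2^d\kappa_{d-1}}{(3d-1)\kappa_d}\,\ibeta\bigl(\frac{3d}{2},\frac{d+1}{2}\bigr)$. Combined with your prefactor $\frac{3\kappa_{d-1}}{\kappa_d}2^{1-d}\ibeta(\frac12,d)$, this gives exactly the stated $C_d$, so there is nothing to reconcile.

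This matters for the final step. With your $\ibeta(d,\frac{d+1}{2})$ you would get wrong constants, for example $\frac{9}{80}$ instead of $C_3=\frac{3}{55}$. With the corrected values $\ibeta(3,\frac32)=\frac{16}{105}$ and $\ibeta(\frac92,2)=\frac{4}{99}$, you recover $C_2=\frac{1024}{525\pi^2}$ and $C_3=\frac{3}{55}$.

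A minor point: no dominated convergence is needed for the error term. The relative error is bounded by $CR^{-2}$ uniformly in $0\leq\ell\leq1$ and the integrand is non-negative. The factor $1+O(R^{-2})$ therefore passes directly through the expectation.
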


\begin{proof}
    By Proposition~\ref{prop:pole}, if $\varrho([B^d]_n)<R$, then $f_0([B^d]_n)=n$. Thus
    \begin{equation*}
        \PP\Bigl(\varrho([B^d]_n)<R\Bigr) \leq p_{n,n}^R(B^d),
    \end{equation*}
    which implies \eqref{eqn:circumradius-n} since the event $\{\varrho([B^d]_n) = R\}$ occurs with probability $0$.
    For $n=3$ we may apply Lemma~\ref{lem:structure} in the affine plane $E$ that is almost surely spanned by $[B^d]_3$ to find that
    $f_0([B^d]_3)=3$ implies $\varrho([B^d]_3)\leq R$. Thus we conclude \eqref{eqn:circumradius-d}.

    For the tail asymptotics we first note
    $\sqrt{R^2-s^2}-\sqrt{R^2-\ell^2}
        =(\ell^2-s^2)\bigl(\sqrt{R^2-s^2}+\sqrt{R^2-\ell^2}\bigr)^{-1}$ with
    $\sqrt{R^2-\ell^2}\leq\sqrt{R^2-s^2}\leq R$, and
    $\int_{-\ell}^{\ell}(\ell^2-s^2)^{d-1}\dint s=\ibeta\bigl(\tfrac12,d\bigr)
        \ell^{\,2d-1}$. Thus, by \eqref{eqn:segment-vol}, we obtain uniformly in $0<\ell\leq1$ and $R\geq\sqrt2$
    \begin{equation}\label{eqn:seg-asymp}
        \vol_d\bigl(\mathrm{Sg}_R(\ell)\bigr)
        = \frac{\kappa_{d-1}}{(2R)^{d-1}}\,
        \ibeta\bigl(d,\tfrac12\bigr)\ell^{\,2d-1}
        \Bigl(1+O\!\bigl(R^{-2}\bigr)\Bigr).
    \end{equation}
    Combining \eqref{eqn:circumradius-d} and \eqref{eqn:seg-asymp} gives
    \eqref{eqn:circumradius-d-tail} with
    $C_d=\frac{3\kappa_{d-1}}{\kappa_d}\,2^{1-d}\,
        \ibeta\bigl(d,\tfrac12\bigr)\EE L_{B^d}^{2d-1}$, which is the stated constant, since by \eqref{eqn:L-moments}
    \begin{equation*}%\label{eqn:L-moment}
        \EE L_{B^d}^{2d-1}
        = \frac{2^d\,d\,\kappa_{d-1}}{(3d-1)\,\kappa_d}\,
        \ibeta\Bigl(\frac{3d}{2},\frac{d+1}{2}\Bigr). \qedhere
    \end{equation*}
\end{proof}

\begin{remark}
    One can show that inequality \eqref{eqn:circumradius-n} is strict for $n\geq 4$.
\end{remark}

At the critical radius $R=1$ the right-hand side of \eqref{eqn:circumradius-d} is
elementary in every dimension: there $\eta=1$ and
$\gamma(\alpha)=\arcsin(\cos \alpha)=\frac\pi2-\alpha$, so the integrand of
\eqref{eqn:k-vol_d-segment-trig} becomes a polynomial in $\alpha$, $\sin \alpha$ and
$\cos \alpha$. For example,
\begin{align*}
    \PP(\varrho([B^2]_3)>1) & =p^1_{3,2}(B^2)= \frac{32}{\pi^2}-3\approx24.23\%,                  \\
    \PP(\varrho([B^3]_3)>1) & =p^1_{3,2}(B^3)=\frac{20509}{1225}-\frac{27\pi^2}{16}\approx8.71\%.
\end{align*}

The tail estimate \eqref{eqn:circumradius-d-tail} implies that
$\EE\varrho([B^d]_3)^{\,k}<\infty$ if and only if $k<d-1$.
This follows by the layer-cake formula: first, let $k<d-1$. By \eqref{eqn:circumradius-d-tail} there exists $R_0>1$
such that the multiplicative error term $(1+O(1/R^2))$ is
bounded above by $2$. Then
\begin{equation*}
    \EE \varrho([B^d]_3)^k
    = \int_0^{\infty} k R^{k-1} \PP(\varrho([B^d]_3)>R) \, \dint R
    \leq R_0^k + 2k C_d \int_{R_0}^{\infty}  R^{k-d}\, \dint R
    < \infty.
\end{equation*}
Similarly, if $k\geq d-1$ then there exists $R_0>1$ such that the error in \eqref{eqn:circumradius-d-tail} is bounded below by $\frac{1}{2}$ and we find $\EE \varrho([B^d]_3)^k  = \infty$.
Note, in particular, that the mean circumradius is infinite for $d=2$ and finite for every $d\geq3$.

\begin{remark}[comparison with the literature]%\label{rem:circumradius-literature}
    The circumradius of a uniform sample in a ball has been studied before,
    conditionally on a containment event.  Affentranger
    \cite{Affentranger:1989} showed that the circumdisc of three independent
    uniform random points in $B^2$ is contained in $B^2$ with probability
    $2/5$ (more generally, he treated circumcircles of three random points
    in $B^d$), and Le~Ca\"er \cite{LeCaer:2017}*{Secs.~IV--V} determined, for
    every $d\geq2$, the joint distribution of the circumradius and of the
    distance of the circumcentre from the orthogonal projection of the origin
    onto the plane of the three points, conditioned on this containment.
    In particular, for $d=2$ the conditional density of $\varrho([B^2]_3)$
    is $60\varrho^3(1-\varrho)^2$ on $(0,1)$, that is, it has a $\mathrm{Beta}(4,3)$ distribution.
    Containment is the event $\{\varrho([B^d]_3)+\|\bc([B^d]_3)\|\leq1\}$, where $\bc([B^d]_3)$
    denotes the circumcentre of $[B^d]_3$. It forces
    $\varrho([B^d]_3)\leq1$, with equality only if $\bc([B^d]_3)$ is the origin, whereas
    \eqref{eqn:circumradius-d} describes $\varrho([B^d]_3)$ on $[1,\infty)$.
    The two results are thus supported on complementary ranges and do not overlap.
    Consistently, the containment probability $2/5$ does not exceed
    $\PP(\varrho([B^2]_3)\leq1)=4-\frac{32}{\pi^2}\approx75.77\%$.
    The unconditional distribution seems to have remained open: its tail was
    investigated in \cite{LeCaer:2017}*{Sec.~VIII} by Monte Carlo simulation,
    leading to the hypothesis that the density of $\varrho([B^2]_3)$ decays like a
    multiple of $\varrho^{-2}$. A fit of the sample maxima to a Fr\'echet
    distribution gave shape parameter $0.997\pm0.008$ and, for the maximum of
    $k$ samples, scale $(0.197\pm0.001)\,k$.
    Theorem~\ref{thm:circumradius-d} confirms this hypothesis for the disc
    and identifies the constant: $\PP(\varrho([B^2]_3)>R)=\frac{1024}{525\pi^2R}
        +O\!\bigl(R^{-3}\bigr)$, and $\frac{1024}{525\pi^2}=0.19762\ldots$ lies
    within the confidence interval of the fitted scale.
    From \eqref{eqn:circumradius-d-tail} we find the Fr\'echet limit law with shape parameter $d-1$
    (cf.\ \cite{Resnick:1987}*{Prop.~1.11}): for independent
    copies $\varrho_1,\varrho_2,\dotsc$ of $\varrho([B^d]_3)$,
    \begin{align*}
        \lim_{k\to\infty}
        \PP \Bigl(\max_{i\leq k}\varrho_i\leq (k\,C_d)^{\frac1{d-1}}\,x\Bigr)
         & = \lim_{k\to\infty} \biggl(1-\PP\Bigl(\varrho([B^d]_3)> (k\,C_d)^{\frac{1}{d-1}}\,x\Bigr)\biggr)^k \\
         & = \lim_{k\to\infty} \Bigl(1-\frac{1}{kx^{d-1}}\, \bigl(1+o(1)\bigr)\Bigr)^k
        = e^{-x^{-(d-1)}},
    \end{align*}
    for $x>0$.

    Combining the two descriptions, the law of $\varrho([B^2]_3)$ is explicit on
    $[1,\infty)$ and, on $(0,1)$, only on the containment event, where
    \[
        \PP\bigl(\varrho([B^2]_3)\leq \min\{x,1-\|\bc([B^2]_3)\|\}\bigr) =24\,\ibeta(4,3;x).
    \]
    The law of $\varrho([B^2]_3)$ on the
    remaining event $\{\varrho([B^2]_3)<1<\varrho([B^2]_3)+\|\bc([B^2]_3)\|\}$, of probability
    $4-\frac{32}{\pi^2}-\frac25\approx35.77\%$, appears to be open.
\end{remark}

\subsection{The proof of Theorem \ref{thm:sylvester-general}}%\label{sec:correction}

Recall that $K\subset\RR^2$ is an $r$-ball-convex body with $0<r\leq R$, and $\bX_1,\bX_2,\bX_3$ are
independent uniform random points in $K$.
The starting point is the decomposition of $\EE\vol_2([K]_3^R)$ provided by
Lemma~\ref{lem:structure}.

\begin{lemma}%\label{lem:master}
    For every $R\geq r$, with $\varrho([K]_3)$ the circumradius, $L_{ij}:=\tfrac12\|\bX_i-\bX_j\|$ and
    $L_{\max}:=\max_{i<j}L_{ij}$,
    \begin{equation}\label{eqn:master}
        \EE\vol_2\bigl([K]_3^R\bigr)
        = \EE\vol_2\bigl([K]_3\bigr) + \frac{3}{2}\,\EE\vol_2\bigl([K]_2^R\bigr) - E_R,
    \end{equation}
    where
    \begin{equation}\label{eqn:Edef}
        E_R := \EE\biggl[\Bigl(\vol_2([K]_3) + \frac{1}{2}\sum_{1\leq i<j\leq 3} V_R(L_{ij}) - V_R(L_{\max})\Bigr)
            \bone\{\varrho([K]_3)>R\}\biggr].
    \end{equation}
\end{lemma}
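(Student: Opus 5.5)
The plan is to compute $\vol_2([K]_3^R)$ pointwise, almost surely, from Lemma~\ref{lem:structure}, and then take expectations. By the argument at the start of the proof of Theorem~\ref{thm:R-buchta}, almost surely the three points lie in $\inte K\subset\inte B^2(\bc,r)$ for some $\bc$. They are therefore contained in an open disc of radius $r\leq R$, they are affinely independent, and $\varrho([K]_3)\neq R$. So Lemma~\ref{lem:structure} applies almost surely. Write $Q:=\vol_2([K]_3)+\frac12\sum_{i<j}V_R(L_{ij})$. Part (ii) gives $\vol_2([K]_3^R)=Q$ on $\{\varrho([K]_3)<R\}$. Part (iii) gives $\vol_2([K]_3^R)=V_R(L_{\max})$ on $\{\varrho([K]_3)>R\}$. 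Hence, almost surely,
\begin{equation*}
  \vol_2([K]_3^R) = Q - \bigl(Q - V_R(L_{\max})\bigr)\bone\{\varrho([K]_3)>R\}.
\end{equation*}

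Next I take expectations. All quantities are bounded: $V_R(L_{ij})\leq \pi R^2$, since $L_{ij}\leq r\leq R$, and every volume is at most $\vol_2(K)$. So linearity of expectation applies without integrability issues. By exchangeability, each pair $(\bX_i,\bX_j)$ with $i<j$ has the same law as $(\bX,\bY)$, so $\EE V_R(L_{ij})=\EE V_R(L_K)$. By the discussion in the introduction, $\conv_R\{\bX_i,\bX_j\}$ is a congruent copy of $\mathrm{Sg}_R(L_{ij})$, hence $\EE V_R(L_K)=\EE\vol_2([K]_2^R)$. Therefore
\begin{equation*}
  \EE Q=\EE\vol_2([K]_3)+\tfrac32\,\EE\vol_2([K]_2^R).
\end{equation*}
Subtracting the expectation of the indicator term, which is exactly $E_R$ in \eqref{eqn:Edef}, yields \eqref{eqn:master}.

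The only delicate point is checking that the hypotheses of Lemma~\ref{lem:structure} hold almost surely: the points must be in a common closed $R$-disc, in general position, and avoid the null event $\varrho=R$. Once this is settled, the rest is bookkeeping. I expect no real obstacle here. The substantive work, evaluating $E_R$ in terms of the law of $L_K$, comes afterwards and is not part of this lemma.
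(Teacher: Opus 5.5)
Your proposal is correct and follows the paper's proof essentially verbatim. Both arguments write $\vol_2([K]_3^R)$ almost surely as $Q-\bigl(Q-V_R(L_{\max})\bigr)\bone\{\varrho([K]_3)>R\}$ using Lemma~\ref{lem:structure}, and then take expectations, using exchangeability of the three pairs to obtain $\frac32\,\EE\vol_2([K]_2^R)$. Your checks that the hypotheses hold almost surely and that all quantities are bounded spell out what the paper leaves implicit.
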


\begin{proof}
    By Lemma~\ref{lem:structure} we almost surely have
    \begin{align*}
        \vol_2\bigl([K]_3^R\bigr)
         & = \Bigl(\vol_2([K]_3)+\frac12\sum_{i<j}V_R(L_{ij})\Bigr)                                                    \\
         & \qquad {}- \Bigl(\vol_2([K]_3)+\frac12\sum_{i<j}V_R(L_{ij}) - V_R(L_{\max})\Bigr)\bone\{\varrho([K]_3)>R\}.
    \end{align*}
    This uses Lemma~\ref{lem:structure} for points in a common $R$-disc, which
    the three points of $K$ span since $K$ is $R$-ball-convex.  Take
    expectations and use $\EE V_R(L_{ij}) = \EE\vol_2([K]_2^R)$ for each of
    the three pairs.
\end{proof}

The first leading term of \eqref{eqn:master} is the classical Sylvester volume
$\EE\vol_2([K]_3)$ and the second is the ball-segment moment $\EE\vol_2([K]_2^R)$.
The point of \eqref{eqn:master} is that $E_R$ is supported on the event
$\{\varrho([K]_3)>R\}$ that the circumradius exceeds $R$.

The next lemma collects the only two integrals over a ball-segment that we shall
need.

\begin{lemma}%\label{lem:segment-moments}
    Let $0\leq\ell\leq R$, let $\conv_R\{\bp,\bq\}$ be the $R$-ball-segment
    spanned by two points $\bp$ and $\bq$ at distance $2\ell=\|\bp-\bq\|$.
    With $h$ and $\gamma$ defined as before, we have
    \begin{align}
        W_R(\ell)    & := \int_{\conv_R\{\bp,\bq\}}
        \operatorname{dist}\bigl(\bx,\operatorname{aff}\{\bp,\bq\}\bigr)\dint\bx
        = 2\ell\bigl(R^2-\tfrac{1}{3}\ell^2\bigr) - 2R^2h\gamma ,
        \label{eqn:W}                                                                            \\
        \Psi_R(\ell) & := \int_{\conv_R\{\bp,\bq\}} V_R\Bigl(\tfrac12\|\bx-\bp\|\Bigr)\,\dint\bx
        = 5 R^2\ell^2 - \ell^4 - 4R^2\ell h\gamma - R^4\gamma^2 .
        \label{eqn:Psi}
    \end{align}
\end{lemma}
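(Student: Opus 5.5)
We work in the normalised position used in the proof of Lemma~\ref{lem:structure}: $\bp=-\ell\be_1$, $\bq=\ell\be_1$, so that $\conv_R\{\bp,\bq\}=\mathrm{Sg}_R(\ell)=B^2(\bc_-,R)\cap B^2(\bc_+,R)$ with $\bc_\pm=\pm h\be_2$. The lens is symmetric under reflection in the $x_1$-axis. Its upper half is bounded by the arc of $\partial B^2(\bc_-,R)$, that is, by the graph of
\begin{equation*}
    g(x)=\sqrt{R^2-x^2}-h, \qquad |x|\leq\ell .
\end{equation*}
Both integrals then reduce to one-dimensional integrals in $x$, respectively in an angle, which are elementary.

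\emph{The integral $W_R$.} Here $\operatorname{dist}(\bx,\operatorname{aff}\{\bp,\bq\})=|x_2|$. By symmetry and Fubini,
\begin{equation*}
    W_R(\ell)=2\int_{-\ell}^{\ell}\int_0^{g(x)}y\,\dint y\,\dint x=\int_{-\ell}^{\ell}g(x)^2\,\dint x .
\end{equation*}
Expanding gives $g^2=R^2+h^2-x^2-2h\sqrt{R^2-x^2}$. We then use
\begin{equation*}
    \int_{-\ell}^{\ell}\sqrt{R^2-x^2}\,\dint x=\ell h+R^2\gamma .
\end{equation*}
The terms $2\ell h^2$ and $-2\ell h^2$ cancel, and what remains is exactly $2\ell R^2-\tfrac23\ell^3-2R^2h\gamma$, which is \eqref{eqn:W}.

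\emph{The integral $\Psi_R$.} Here the integrand is radial about $\bp$, so we use polar coordinates centred at $\bp$, with angle $\varphi$ measured from the chord $\bp\bq$. By the tangent--chord angle theorem, the tangent to the upper arc at $\bp$ makes angle $\gamma$ with the chord. Hence the ray from $\bp$ at angle $\varphi\in[0,\gamma]$ leaves the lens through the upper arc at distance $2R\sin(\gamma-\varphi)$, the length of a chord of an $R$-circle with tangent--chord angle $\gamma-\varphi$. Using the symmetry in $\varphi\mapsto-\varphi$ and substituting $\rho=2s$,
\begin{equation*}
    \Psi_R(\ell)=2\int_0^{\gamma}\int_0^{2R\sin(\gamma-\varphi)}V_R(\rho/2)\,\rho\,\dint\rho\,\dint\varphi
    =8\int_0^{\gamma}\int_0^{R\sin\psi}V_R(s)\,s\,\dint s\,\dint\psi .
\end{equation*}
Exchanging the order of integration (the constraint is $s\leq R\sin\psi$, i.e.\ $\psi\geq\arcsin(s/R)$) gives
\begin{equation*}
    \Psi_R(\ell)=8\int_0^{\ell}V_R(s)\,s\,\Bigl(\gamma-\arcsin\frac sR\Bigr)\dint s .
\end{equation*}
Now substitute $s=R\sin\beta$, for which $V_R(s)=R^2(2\beta-\sin2\beta)$. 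This turns the integral into
\begin{equation*}
    \Psi_R(\ell)=8R^4\int_0^{\gamma}(2\beta-\sin2\beta)\sin\beta\cos\beta\,(\gamma-\beta)\,\dint\beta ,
\end{equation*}
whose integrand is a polynomial in $\beta$, $\sin2\beta$ and $\cos2\beta$. Integrating by parts and rewriting the result through $\sin\gamma=\ell/R$ and $\cos\gamma=h/R$ should produce $5R^2\ell^2-\ell^4-4R^2\ell h\gamma-R^4\gamma^2$.

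The only genuinely geometric step, and the one I would double-check, is the polar description of the lens from the vertex $\bp$: that the exit distance is $2R\sin(\gamma-\varphi)$, and that for $|\varphi|\leq\gamma$ each ray meets the boundary only once. For the latter I would note that the lens is convex and that its two arcs meet at $\bp$ with tangent directions at angles $\pm\gamma$ to the chord. Beyond that the main risk is bookkeeping in the final trigonometric integration. I would verify the result by checking the small-$\ell$ behaviour, $\Psi_R=O(\ell^6/R^2)$, consistent with $V_R(s)=O(s^3/R)$ from \eqref{eqn:segment-2d-asymp}, and by differentiating in $\ell$.
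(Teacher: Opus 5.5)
Your proposal is correct and follows essentially the paper's route. The paper also computes $\Psi_R$ in polar coordinates about $\bp$ with exit distance $2R\sin(\gamma-\theta)$; your exchange of integration order producing the weight $\gamma-\beta$ is just the Cauchy form of the iterated integral $\int_0^\gamma\int_0^\theta\cdots\,\dint\nu\,\dint\theta$ that the paper keeps. For $W_R$ the paper slices horizontally rather than vertically, which is immaterial.

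The trigonometric integral you left open does close:
\[
\int_0^\gamma(2\beta-\sin2\beta)\sin2\beta\,(\gamma-\beta)\,\dint\beta=\frac{1-\cos2\gamma}{2}-\frac{\gamma\sin2\gamma}{2}-\frac{\gamma^2}{4}+\frac{1-\cos4\gamma}{32}.
\]
Multiplying by $4R^4$ and substituting $\sin\gamma=\ell/R$, $\cos\gamma=h/R$ gives exactly $5R^2\ell^2-\ell^4-4R^2\ell h\gamma-R^4\gamma^2$.
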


\begin{proof}
    Place $\bp=-\ell\be_1$, $\bq=\ell\be_1$, so that
    $\conv_R\{\bp,\bq\}=B^2(\bc_-,R)\cap B^2(\bc_+,R)$ with
    $\bc_\pm=\pm h\be_2$, as in the proof of Lemma~\ref{lem:structure}.

    First, we prove \eqref{eqn:W}. By symmetry, $W_R(\ell)=4\int_0^{R-h} y\sqrt{R^2-(y+h)^2}\,\dint y$,
    since for $y\in[0,R-h]$ the horizontal slice of the upper half of the
    ball-segment has length $2\sqrt{R^2-(y+h)^2}$.  Substituting $u=y+h$,
    \begin{equation*}
        W_R(\ell) = 4\int_h^R (u-h)\sqrt{R^2-u^2}\,\dint u
        = \frac43 \ell^3
        - 4h\biggl(\frac{R^2}{2}\gamma - \frac{\ell h}{2}\biggr).
    \end{equation*}
    This gives \eqref{eqn:W} after substituting $h^2=R^2-\ell^2$.

    In order to prove \eqref{eqn:Psi}, we use polar coordinates $\bx=\bp+t\,\bu(\theta)$ centred at
    the vertex $\bp$, with $\bu(\theta)=(\cos\theta,\sin\theta)$ measured from
    the direction of $\bq-\bp$.  The tangent of $\partial B^2(\bc_-,R)$ at $\bp$
    has direction $(\cos\gamma,\sin\gamma)$, so the ball-segment is
    $\{\bp+t\,\bu(\theta):\ |\theta|\leq\gamma,\ 0\leq t\leq \tau(\theta)\}$,
    and writing $\bv:=\bp-\bc_-$ (so $\|\bv\|=R$), the equation
    $\|\bv+t\,\bu(\theta)\|=R$ gives
    $\tau(\theta)=-2 \langle \bv,\bu(\theta)\rangle = 2R\sin(\gamma-\theta)$ for
    $\theta\in[0,\gamma]$.  Hence, by symmetry and $\theta\mapsto\gamma-\theta$,
    \begin{equation*}
        \Psi_R(\ell) = 2\int_0^\gamma\int_0^{2R\sin\theta} V_R\Bigl(\frac{t}{2}\Bigr)\, t\,\dint t\,\dint\theta .
    \end{equation*}
    Substituting $t=2R\sin\nu$, so that $V_R(t/2)=V_R(R\sin\nu)=R^2(2\nu-\sin2\nu)$ and
    $t\,\dint t=2R^2\sin2\nu\,\dint\nu$ by \eqref{eqn:segment-2d},
    \begin{align*}
        \int_0^{2R\sin\theta} V_R\Bigl(\frac{t}{2}\Bigr)t\,\dint t
         & = 2R^4\int_0^\theta(2\nu-\sin2\nu)\sin2\nu\,\dint\nu                                                \\
         & = 2R^4\biggl(\frac{\sin2\theta}{2}-\theta\cos2\theta-\frac{\theta}{2}+\frac{\sin4\theta}{8}\biggr),
    \end{align*}
    and integrating once more over $\theta\in[0,\gamma]$ gives
    \begin{equation*}
        \Psi_R(\ell) = 4R^4\biggl(\frac{1-\cos2\gamma}{2}-\frac{\gamma\sin2\gamma}{2}
        -\frac{\gamma^2}{4}+\frac{1-\cos4\gamma}{32}\biggr),
    \end{equation*}
    which is \eqref{eqn:Psi} after substituting $\sin\gamma=\ell/R$,
    $\cos\gamma=h/R$.
\end{proof}

We are now in a position to calculate $E_R$, and to prove the second main result.

\begin{proof}[Proof of Theorem~\ref{thm:sylvester-general}]
    We claim that
    \begin{equation}\label{eqn:E}
        E_R = \frac{3}{2\vol_2(K)}\,\EE\biggl[\frac{32}{3} L_K^4
            - 8\,\Delta_R(L_K)\,V_R\bigl(L_K\bigr) - \frac32 V_R\bigl(L_K\bigr)^2\biggr],
    \end{equation}
    a one-dimensional integral against the law of $L_K$ in which no explicit power
    of $R$ occurs, or equivalently
    \begin{equation}\label{eqn:E-split}
        E_R = \frac{16}{\vol_2(K)}\,\EE L_K^4
        - \frac{9}{4\vol_2(K)}\,\EE V_R\bigl(L_K\bigr)^2
        - \frac{12}{\vol_2(K)}\,\EE\bigl[\Delta_R(L_K)\,V_R(L_K)\bigr],
    \end{equation}
    with $\Delta_R(\ell)=\ell\sqrt{R^2-\ell^2}$ as in \eqref{eqn:def-Delta-R}.

    By Lemma~\ref{lem:structure}(i) the three events
    $A_j:=\{\bX_j\in\conv_R\{\bX_i,\bX_k\}\}$, $\{i,j,k\}=\{1,2,3\}$, are
    pairwise disjoint (a triangle has at most one angle $\geq\pi/2$) and
    $\{\varrho([K]_3)>R\}=A_1\cup A_2\cup A_3$ up to a set of measure zero.  On $A_3$ the longest
    side is $L_{\max}=L_{12}$, by Lemma~\ref{lem:structure}(iii).
    Moreover,
    $K$ is $r$-ball-convex and $R\geq r$, so $\conv_R\{\bx_1,\bx_2\}\subset K$ for
    all $\bx_1,\bx_2\in K$. Hence, after conditioning on two of the points, the
    remaining integration over the ball-segment is never clipped by $\partial K$, and
    the conditional density of the third point on the ball-segment is
    $1/\vol_2(K)$.
    We treat the three parts of \eqref{eqn:Edef} one by one. All three use the
    exchangeability of $\bX_1,\bX_2,\bX_3$.

    \emph{(a)} Conditionally on $\bX_1,\bX_2$, we have
    $\vol_2([K]_3)=L_{12}\operatorname{dist}(\bX_3,\operatorname{aff}\{\bX_1,\bX_2\})$,
    so integrating $\bX_3$ over $\conv_R\{\bX_1,\bX_2\}$ and using \eqref{eqn:W},
    \begin{equation*}
        \EE\bigl[\vol_2([K]_3)\bone\{\varrho([K]_3)>R\}\bigr]
        = 3\,\EE\bigl[\vol_2([K]_3)\bone_{A_3}\bigr]
        = \frac{3}{\vol_2(K)}\,\EE\bigl[L_K\,W_R(L_K)\bigr].
    \end{equation*}

    \emph{(b)} Again by disjointness, and since $\bX_2$ is a vertex of
    $\conv_R\{\bX_2,\bX_3\}$,
    \begin{align*}
        \EE\bigl[V_R(L_{12})\bone\{\varrho([K]_3)>R\}\bigr]
         & = \EE\bigl[V_R(L_{12})\bone_{A_3}\bigr] + \EE\bigl[V_R(L_{12})\bone_{A_1}\bigr]
        + \EE\bigl[V_R(L_{12})\bone_{A_2}\bigr].
    \end{align*}
    Since $V_R(L_{12}) = \vol_2(\conv_R\{\bX_1,\bX_2\})$,
    \begin{align*}
        \EE\bigl[V_R(L_{12})\bone_{A_3}\bigr]
         & =
        \frac{1}{\vol_2(K)^3}
        \int_{K}\int_{K}\int_{\conv_R\{\bx_1,\bx_2\}} V_R\bigl(\tfrac{1}{2}\|\bx_1-\bx_2\|\bigr) \, \dint\bx_3\dint\bx_2\dint\bx_1 \\
         & =\frac{1}{\vol_2(K)}\EE\bigl[V_R(L_{12})^2\bigr],
    \end{align*}
    and by \eqref{eqn:Psi}
    \begin{align*}
        \EE\bigl[V_R(L_{12})\bone_{A_1}\bigr]
         & = \frac{1}{\vol_2(K)^3}
        \int_{K}\int_{K}\int_{\conv_R\{\bx_2,\bx_3\}} V_R\bigl(\tfrac{1}{2}\|\bx_1-\bx_2\|\bigr) \, \dint\bx_1\dint\bx_2\dint\bx_3 \\
         & = \frac{1}{\vol_2(K)}\EE\bigl[\Psi_R(L_{23})\bigr].
    \end{align*}
    Since the three pairs are exchangeable,
    \begin{align*}
        \EE\Bigl[\frac12\sum_{i<j}V_R(L_{ij})\bone\{\varrho([K]_3)>R\}\Bigr]
         & = \frac32\,\EE\bigl[V_R(L_{12})\bone\{\varrho([K]_3)>R\}\bigr]    \\
         & = \frac{3}{\vol_2(K)} \biggl(\frac{1}{2}\EE\bigl[V_R(L_K)^2\bigr]
        + \EE\bigl[\Psi_R(L_K)\bigr]\biggr).
    \end{align*}

    \emph{(c)} Since $L_{\max}=L_{12}$ on $A_3$,
    \begin{equation*}
        \EE\bigl[V_R(L_{\max})\bone\{\varrho([K]_3)>R\}\bigr] = 3\,\EE\bigl[V_R(L_{12})\bone_{A_3}\bigr]
        = \frac{3}{\vol_2(K)}\,\EE\bigl[V_R(L_K)^2\bigr].
    \end{equation*}

    Adding (a) and (b) and subtracting (c) gives
    \begin{equation}\label{eqn:E-three}
        E_R = \frac{3}{2\vol_2(K)}\,\EE\bigl[2L_K\,W_R(L_K) + 2\Psi_R(L_K) - V_R(L_K)^2\bigr].
    \end{equation}
    Finally, by \eqref{eqn:W}, \eqref{eqn:Psi} and
    $V_R(\ell)=2R^2\gamma-2\ell h$ of \eqref{eqn:segment-2d}, both sides of the
    algebraic identity
    \begin{equation}\label{eqn:key-identity}
        2\ell\, W_R(\ell) + 2\Psi_R(\ell) - V_R(\ell)^2
        = \frac{32}{3}\ell^4 - 8\ell h\,V_R(\ell) - \frac32 V_R(\ell)^2
    \end{equation}
    are equal to
    \begin{equation*}
        10R^2\ell^2 + \frac23\ell^4 - 4R^2\ell h\,\gamma - 6R^4\gamma^2,
    \end{equation*}
    which can be checked by direct substitution.
    Applied at $\ell=L_K$,
    \eqref{eqn:key-identity} turns \eqref{eqn:E-three} into \eqref{eqn:E}.
    Substituting \eqref{eqn:E-split} into
    \eqref{eqn:master} gives \eqref{eqn:intro-vol3}.

    Furthermore, \eqref{eqn:pn2} with $n=4$ gives
    $p_{4,2}^R(K)=\frac{6}{\vol_2(K)^2}\,\EE V_R(L_K)^2$, and \eqref{eqn:pn3}
    with $n=4$ gives
    $p_{4,3}^R(K)=\frac{4}{\vol_2(K)}\EE\vol_2([K]_3^R)-2p_{4,2}^R(K)$.
    Inserting \eqref{eqn:intro-vol3} yields the stated expression, the
    coefficient of $\EE V_R(L_K)^2$ being
    $\frac{9}{\vol_2(K)^2}-\frac{12}{\vol_2(K)^2}=-\frac{3}{\vol_2(K)^2}$.
    Finally, $p_{3,2}^R(K)=\frac{3}{\vol_2(K)}\,\EE V_R(L_K)$ and $p_{3,3}^R(K)=1-p_{3,2}^R(K)$
    are \eqref{eqn:pn2} and \eqref{eqn:pnn} with $n=3$, while
    $p_{4,4}^R(K)=1-p_{4,3}^R(K)-p_{4,2}^R(K)$.
\end{proof}

The main takeaway of \eqref{eqn:E-split} is that determining $E_R$ is not harder than the
two-point moments: in addition to $\EE V_R(L_K)^2$ and $\EE L_K^4$, the only
new quantity is the single one-dimensional integral $\EE[\Delta_R(L_K)V_R(L_K)]$,
of exactly the same type.  It is worth emphasising that the explicit powers of $R$
carried by $W_R$ and $\Psi_R$ cancel in \eqref{eqn:key-identity}, so that the
entire dependence of $E_R$ on the radius is carried by $\Delta_R$ and $V_R$.
This is the reason why no power of $R$ appears in
Theorem~\ref{thm:sylvester-general}.

\subsection{Explicit values for the unit disc}%\label{sec:disc-values}

Specialising $K$ to the unit disc, where $\vol_2(B^2)=\pi$,
$\EE\vol_2([B^2]_3)=35/48\pi$ and $\EE L_{B^2}^4=5/48$, and
writing $m_1,m_2$ as in \eqref{eqn:m1m2} and
$\mu(R):=\EE\bigl[\Delta_R(L_{B^2})\,V_R(L_{B^2})\bigr]$, we obtain
\begin{equation}\label{eqn:vol3-explicit}
    \EE\vol_2\bigl([B^2]_3^R\bigr)
    = -\frac{15}{16\pi} + \frac{3}{2}\,m_1(R) + \frac{9}{4\pi}\,m_2(R)
    + \frac{12}{\pi}\,\mu(R),
\end{equation}
and correspondingly
\begin{align*}
    p_{4,2}^R(B^2) & = \frac{6}{\pi^2}\,m_2(R),                                         \\
    p_{4,3}^R(B^2) & = -\frac{15}{4\pi^2}+\frac{6}{\pi}\,m_1(R)-\frac{3}{\pi^2}\,m_2(R)
    +\frac{48}{\pi^2}\,\mu(R),                                                          \\
    p_{4,4}^R(B^2) & = 1-p_{4,3}^R(B^2)-p_{4,2}^R(B^2).
\end{align*}
This proves all of
Theorem~\ref{thm:sylvester-r-convex} except for the explicit values at $R=1$,
which we shall deal with next, and the asymptotics \eqref{eqn:intro-p44-rate},
which are established in Remark~\ref{rem:asymptotics}.

Note that it is enough to consider the unit disc $B^2$ since by homogeneity $f_0([rB^2]_n^R)$ and $f_0([B^2]_n^{R/r})$ have the same distribution and
$p_{n,k}^R(rB^2)=p_{n,k}^{R/r}(B^2)$ for all $0<r\leq R$.

\begin{corollary}\label{cor:E1}
    The mixed moment at the critical radius is
    \begin{equation}\label{eqn:mu-at-one}
        \mu(1) = \EE\bigl[\Delta_1(L_{B^2})V_1(L_{B^2})\bigr] = \frac{2\pi^2-17}{24},
    \end{equation}
    and consequently
    \begin{equation*}
        \EE\vol_2\bigl([B^2]_3^1\bigr) = \frac{87}{4\pi}-2\pi.
    \end{equation*}
    This, in particular, determines the probabilities $(p_{4,2}^1(B^2),p_{4,3}^1(B^2),p_{4,4}^1(B^2))$ as stated in Theorem \ref{thm:sylvester-r-convex}.
\end{corollary}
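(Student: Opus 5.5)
The plan is to compute $\mu(1)$ directly from Hammersley's density in the planar case, \eqref{eqn:dist-density-2d} with $r=1$, using the same trigonometric substitution that gave \eqref{eqn:k-moment-at-one}. At $R=1$ and $\ell=\cos\alpha$ we have $\Delta_1(\ell)=\ell\sqrt{1-\ell^2}=\cos\alpha\sin\alpha=\tfrac12\sin2\alpha$. We also have $\gamma=\frac\pi2-\alpha$, so that $V_1(\ell)=2\gamma-\sin2\gamma=\pi-2\alpha-\sin2\alpha$. Moreover, $f_{2,1}(\ell)\,\dint\ell$ becomes $\frac{4}{\pi}(2\alpha-\sin2\alpha)\sin2\alpha\,\dint\alpha$, exactly as in \eqref{eqn:k-moment}. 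With $u=2\alpha$ this gives
\begin{equation*}
    \mu(1)=\frac{1}{\pi}\int_0^{\pi}\bigl(\pi-u-\sin u\bigr)\bigl(u-\sin u\bigr)\sin^2u\,\dint u .
\end{equation*}
The integrand is a polynomial in $u$, $\sin u$ and $\cos u$. Expanding it yields integrals of the types $\int_0^\pi u^j\sin^m u\,\dint u$ with $j\le 2$ and $m\le 4$. Each of these is elementary: use power-reduction, rewriting $\sin^2u$ and $\sin^4u$ in terms of $\cos 2u$ and $\cos4u$, and then integrate by parts.

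As a shortcut, I would exploit the symmetry $u\mapsto\pi-u$. It swaps the factors $\pi-u-\sin u$ and $u-\sin u$ and leaves $\sin^2u$ invariant. Writing $a=u-\sin u$ and $b=\pi-u-\sin u$, we have $ab=\tfrac14\bigl((a+b)^2-(a-b)^2\bigr)$ with $a+b=\pi-2\sin u$ and $a-b=2u-\pi$. This reduces the computation to the integrals $\int_0^\pi\sin^2u$, $\int_0^\pi\sin^3u$, $\int_0^\pi\sin^4u$ and $\int_0^\pi(2u-\pi)^2\sin^2u$, all standard. Their values are $\pi/2$, $4/3$, $3\pi/8$, and $\pi^3/6-\pi$ respectively (the last after expanding $\sin^2u=\frac12(1-\cos2u)$). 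Assembling these should produce $\mu(1)=(2\pi^2-17)/24$, and I would confirm this numerically (about $0.2144$).

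Next, I substitute $m_1(1)=\frac{32}{3\pi}-\pi$ and $m_2(1)=\frac{81-8\pi^2}{12}$ from \eqref{eqn:m1m2-at-one}, together with $\mu(1)$, into \eqref{eqn:vol3-explicit}. This gives $\EE\vol_2([B^2]_3^1)=-\frac{15}{16\pi}+\frac{16}{\pi}-\frac{3\pi}{2}+\frac{243}{16\pi}-\frac{3\pi}{2}+\frac{2\pi^2-17}{2\pi}$, which simplifies to $\frac{87}{4\pi}-2\pi$. Finally, I insert these values into the formulas for $p_{4,2}^R,p_{4,3}^R,p_{4,4}^R$ displayed after \eqref{eqn:vol3-explicit}. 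This gives $p_{4,2}^1=\frac{81}{2\pi^2}-4$ and $p_{4,3}^1=-\frac{15}{4\pi^2}+\frac{64}{\pi^2}-6-\frac{81}{4\pi^2}+2+\frac{4(2\pi^2-17)}{\pi^2}=\frac{6}{\pi^2}$. Then $p_{4,4}^1=1-p_{4,2}^1-p_{4,3}^1=5-\frac{93}{2\pi^2}$.

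The only real obstacle is bookkeeping in the elementary integral for $\mu(1)$, in particular the $u^2\sin^2u$ term. The symmetrisation above is meant to minimise that risk, and a numerical sanity check guards against sign slips.
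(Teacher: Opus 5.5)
Your proposal is correct and is essentially the paper's argument. The substitution $\ell=\cos\alpha$ is the mirror image of the paper's $\ell=\sin\alpha$ and gives the same elementary integral; your $u\mapsto\pi-u$ symmetrisation is a tidy way to evaluate it, and your four auxiliary integrals are right. Substituting into \eqref{eqn:vol3-explicit} and the displayed formulas for $p_{4,k}^R(B^2)$ is just the pre-assembled form of the paper's route through $E_1$, \eqref{eqn:E-split}, \eqref{eqn:master} and the Efron--Buchta identity.

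Two arithmetic slips should be fixed. Numerically $\mu(1)=\frac{2\pi^2-17}{24}\approx 0.1141$, not $0.2144$. In $p_{4,3}^1$ the last term is $\frac{48}{\pi^2}\mu(1)=\frac{2(2\pi^2-17)}{\pi^2}$, not $\frac{4(2\pi^2-17)}{\pi^2}$; with your coefficient the sum would be $4-\frac{28}{\pi^2}$, whereas with the correct one it is indeed $\frac{6}{\pi^2}$.
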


\begin{proof}
    Substitute $\ell=\sin\alpha$, $\alpha\in[0,\frac\pi2]$, in \eqref{eqn:E-split}.
    By \eqref{eqn:dist-density-2d} with $r=1$, the density $f_{2,1}$ of
    $L_{B^2}$ then transforms as
    $f(\ell)\dint\ell=\frac{4}{\pi}(\sin2\alpha)(\pi-2\alpha-\sin2\alpha)\dint\alpha$,
    and for $R=1$ we have $\sqrt{1-\ell^2}=\cos\alpha$, so that
    $\Delta_1(\sin\alpha)=\tfrac12\sin2\alpha$ and
    $V_1(\sin\alpha)=2\alpha-\sin2\alpha$. Hence,
    \begin{equation*}
        \mu(1) = \frac{2}{\pi}\int_0^{\frac{\pi}{2}}
        \bigl(\sin^2 2\alpha\bigr)\bigl(2\alpha-\sin2\alpha\bigr)
        \bigl(\pi-2\alpha-\sin2\alpha\bigr)\dint\alpha
        = \frac{2\pi^2-17}{24},
    \end{equation*}
    an elementary integral. This is \eqref{eqn:mu-at-one}.  With
    $\vol_2(B^2)=\pi$ and $\EE L_{B^2}^4=\frac{5}{48}$ the first term of \eqref{eqn:E-split} is
    $\frac{16}{\pi}\cdot\frac{5}{48}=\frac{5}{3\pi}$, and with
    $m_2(1)=\EE\vol_2([B^2]_2^1)^2=\frac{81-8\pi^2}{12}$ from
    \eqref{eqn:m1m2-at-one}, \eqref{eqn:E-split} gives
    \begin{align*}
        E_1 & = \frac{5}{3\pi}
        - \frac{9}{4\pi}\cdot\frac{81-8\pi^2}{12}
        - \frac{12}{\pi}\cdot\frac{2\pi^2-17}{24}
        = \frac{\pi}{2}-\frac{241}{48\pi}.
    \end{align*}
    From this, \eqref{eqn:master} and $\EE\vol_2([B^2]_2^1)=\frac{32}{3\pi}-\pi$,
    \begin{equation*}
        \EE\vol_2\bigl([B^2]_3^1\bigr)
        = \frac{35}{48\pi} + \frac{16}{\pi}-\frac{3\pi}{2}
        - \frac{\pi}{2}+\frac{241}{48\pi}
        = \frac{276}{48\pi}+\frac{16}{\pi}-2\pi = \frac{87}{4\pi}-2\pi.
    \end{equation*}
    The value of $p_{4,2}^1$ is Corollary~\ref{cor:pn2}.  By the Efron--Buchta
    identity,
    \[
        p_{4,3}^1=\frac4\pi\EE\vol_2([B^2]_3^1)-2p_{4,2}^1
        =\frac{87}{\pi^2}-8-\frac{81}{\pi^2}+8=\frac{6}{\pi^2},
    \]
    and $p_{4,4}^1=1-p_{4,3}^1-p_{4,2}^1=5-\frac{93}{2\pi^2}$.
\end{proof}

\section{The three moments \texorpdfstring{$m_1$, $m_2$ and $\mu$}{m1, m2 and mu}}\label{sec:moments}

In Section~\ref{sec:three}, the vertex distributions of three and four
uniform random points in the unit disc have been reduced to the three one-dimensional integrals
\begin{equation*}
    m_1(R) = \EE V_R(L_{B^2}),\qquad
    m_2(R) = \EE V_R(L_{B^2})^2,\qquad
    \mu(R) = \EE\bigl[\Delta_R(L_{B^2})\,V_R(L_{B^2})\bigr]
\end{equation*}
against the law of the half-distance $L_{B^2}$ of two random points, which has the explicit density \eqref{eqn:dist-density-2d}.

At the critical
radius $R=1$ all three moments are elementary, according to \eqref{eqn:k-moment-at-one} and
Corollary~\ref{cor:E1}.
Our aim in this section is to show that for $R>1$ they can be expressed in a closed form using special functions.

\subsection{The three moments as rational series}%\label{sec:direct}

Let
\begin{equation*}%\label{eqn:S-def}
    u := \frac{\ell}{R}\in[0,1],
    \qquad
    S(u) := (\arcsin u)^2.
\end{equation*}
We use the following two expansions
\begin{align}
    \arcsin u-u\sqrt{1-u^2}
     & = 2\sum_{n\geq0}\frac{1}{4^{n}}\binom{2n}{n}\frac{u^{2n+3}}{2n+3},
    \label{eqn:arcsin-exp}                                                \\
    S(u)
     & = \frac12\sum_{n\geq1}\frac{4^n}{n^2}\binom{2n}{n}^{\!-1} u^{2n},
    \label{eqn:arcsin2-exp}
\end{align}
as well as the classical identity
\begin{equation}\label{eqn:binomial-Gamma-rel}
    \frac{1}{4^n}\binom{2n}{n}
    = \frac{\Gamma(n+\frac12)}{\sqrt{\pi}\,\Gamma(n+1)},
\end{equation}
for $n\in \NN_0$, where $\Gamma$ is Euler's Gamma function.

The first expansion \eqref{eqn:arcsin-exp} follows from the classical binomial series
\begin{equation*}
    \frac{1}{\sqrt{1-z^2}} = \sum_{n\geq 0} \frac{1}{4^n} \binom{2n}{n}\, z^{2n},
\end{equation*}
for $|z|<1$, since
\begin{equation*}
    \arcsin u-u\sqrt{1-u^2}
    = 2\int_{0}^u \frac{z^2}{\sqrt{1-z^2}} \, \dint z.
\end{equation*}
One directly verifies that \eqref{eqn:arcsin-exp} also converges for $u=1$.

The second expansion \eqref{eqn:arcsin2-exp} is also very classical, see \cite{BC:2007}*{Equation (2.1)} and its historical discussion there, and follows, for example, by noticing that $S(u)$ is the unique solution
of the following second order differential equation
\begin{equation*}
    (1-u^2) S'' (u) - u S'(u) = 2, \qquad \text{with $S(0)=S'(0)=0$.}
\end{equation*}

Using the identity
\begin{equation*}%\label{eqn:cross}
    u\sqrt{1-u^2}\,\arcsin u = \frac{u(1-u^2)}{2}\,S'(u),
\end{equation*}
and by \eqref{eqn:segment-2d} and \eqref{eqn:def-Delta-R}, we find
\begin{align}
    V_R(\ell)               & = 2R^2\bigl(\arcsin u-u\sqrt{1-u^2}\bigr),
    \label{eqn:V-in-u}                                                   \\
    V_R(\ell)^2             & = 4R^4\bigl(S(u)-u(1-u^2)(S'(u)-u)\bigr),
    \label{eqn:V2-in-u}                                                  \\
    \Delta_R(\ell)V_R(\ell) & = R^4u(1-u^2)\bigl(S'(u)-2u\bigr).
    \label{eqn:DV-in-u}
\end{align}
Thus all three moments can be expressed using the series expansions \eqref{eqn:arcsin-exp} and \eqref{eqn:arcsin2-exp}.

\begin{proposition}[the three moments as rational series]\label{prop:direct}
    For every $R\geq1$,
    \begin{align}
        m_1(R) & = \frac{512}{\pi}\sum_{n\geq0}
        \frac{(n+1)(n+2)}{(2n+1)(2n+3)^2(2n+5)^2(2n+7)}\, \frac{1}{R^{2n+1}},
        \label{eqn:m1-direct}                   \\
        m_2(R) & = 4\sum_{n\geq1}
        \frac{(2n+3)(2n+5)}{(n+1)(n+2)^2(n+3)^2(n+4)} \, \frac{1}{R^{2n}},
        \label{eqn:m2-direct}                   \\
        \mu(R) & = \frac{5}{36}-\sum_{n\geq1}
        \frac{(2n+5)}{(n+1)(n+2)(n+3)^2(n+4)} \, \frac{1}{R^{2n}}.
        \label{eqn:mu-direct}
    \end{align}
\end{proposition}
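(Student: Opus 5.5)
The plan is to expand each integrand as a power series in $u=\ell/R$ using \eqref{eqn:V-in-u}--\eqref{eqn:DV-in-u}, integrate term by term against the density of $L_{B^2}$, and evaluate the resulting moments $\EE L_{B^2}^{k}$ by \eqref{eqn:L-moments}. For $d=2$, \eqref{eqn:L-moments} gives
\[
\EE L_{B^2}^{k}=\frac{16}{(k+2)\pi}\,\ibeta\Bigl(\frac{k+3}{2},\frac32\Bigr),
\]
which is rational times $1/\pi$ for odd $k$. For even $k=2j$ it reduces via \eqref{eqn:binomial-Gamma-rel} (or the beta-factorisation remark, $\EE L^{2s}=\frac{(1)_s(3/2)_s}{(2)_s(3)_s}$) to the rational number
\[
\EE L_{B^2}^{2j}=\frac{2\,(3/2)_j}{(j+1)(j+2)\,j!}\cdot\frac{1}{(j+1)\cdots}\,,
\]
whose exact form is to be computed. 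Since all series have nonnegative coefficients (or are dominated by such) and converge at $u=1$, monotone or dominated convergence justifies the interchange for every $R\geq1$.

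For $m_1$, insert \eqref{eqn:arcsin-exp} into \eqref{eqn:V-in-u}:
\[
m_1(R)=4R^2\sum_{n\ge0}4^{-n}\binom{2n}{n}\frac{\EE L^{2n+3}}{(2n+3)R^{2n+3}}.
\]
Now $\EE L^{2n+3}=\frac{16}{(2n+5)\pi}\ibeta(n+3,\frac32)$, and $\ibeta(n+3,\tfrac32)=\Gamma(n+3)\Gamma(\tfrac32)/\Gamma(n+\tfrac92)$. Combining this with $4^{-n}\binom{2n}{n}=\Gamma(n+\frac12)/(\sqrt\pi\,n!)$, the Gamma ratio $\Gamma(n+\frac12)/\Gamma(n+\frac92)$ telescopes to $1/\bigl((n+\frac12)(n+\frac32)(n+\frac52)(n+\frac72)\bigr)$, and $\Gamma(n+3)/n!=(n+1)(n+2)$. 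Collecting the powers of $2$ should give exactly \eqref{eqn:m1-direct}, with the exponent of $R$ reduced to $2n+1$ after the factor $R^2$.

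For $m_2$ and $\mu$, use \eqref{eqn:V2-in-u} and \eqref{eqn:DV-in-u} with \eqref{eqn:arcsin2-exp}. Writing $S(u)=\sum_{n\ge1}a_nu^{2n}$ with $a_n=\frac{4^n}{2n^2}\binom{2n}{n}^{-1}$, we have $uS'(u)=\sum 2na_nu^{2n}$. Then
\[
S-u(1-u^2)(S'-u)\quad\text{and}\quad u(1-u^2)(S'-2u)
\]
are even power series whose coefficients are obtained by shifting indices; $a_1=1$ is what cancels the $u^2$ terms. Multiplying by $R^4$ and taking expectations gives $\sum_n c_n\,\EE L^{2n}R^{4-2n}$. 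The even moments are rational, and the product $a_n\cdot\EE L^{2n}$ involves $\binom{2n}{n}^{-1}$ times $\Gamma(n+\frac32)/\Gamma(n+3)$-type factors, which cancel to rational functions of $n$. Reindexing $n\mapsto n+2$ should align the result with \eqref{eqn:m2-direct}. For $\mu$, the low-order terms produce the constant $\frac{5}{36}$, which one can sanity-check: as $R\to\infty$, $\Delta_RV_R\to\frac43\ell^4$, so $\mu(\infty)=\frac43\EE L^4=\frac43\cdot\frac5{48}=\frac5{36}$. The remaining terms give the negative series.

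The main obstacle is the bookkeeping in the $m_2$ and $\mu$ cases. The factor $(1-u^2)$ mixes consecutive coefficients $a_n$ and $a_{n-1}$, and the resulting difference must be simplified, after multiplication by $\EE L^{2n}$, to the single clean rational term shown. I would first compute the general coefficient symbolically using $a_{n}/a_{n-1}=\frac{2(n-1)^2}{n(2n-1)}$ to combine the two shifted terms into one expression. I would then verify the formula for small $n$ and check it against the closed values $m_2(1)$ and $\mu(1)$ from \eqref{eqn:m1m2-at-one} and \eqref{eqn:mu-at-one}, which also confirms the convergence at $R=1$.
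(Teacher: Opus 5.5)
Your proposal is correct and follows the paper's proof essentially step by step. It uses term-by-term integration of \eqref{eqn:arcsin-exp} and \eqref{eqn:arcsin2-exp} against the law of $L_{B^2}$, the moments \eqref{eqn:L-moments} with the Gamma-quotient telescoping for $m_1$, and, for $m_2$ and $\mu$, the same coefficient recurrence $2na_n-2(n-1)a_{n-1}=-\frac{n}{n-1}a_n$ (equivalent to your ratio $a_n/a_{n-1}$) followed by the shift $n\mapsto n+2$. The bookkeeping you left open closes as expected: $\EE L_{B^2}^{2n}=\frac{4\,\Gamma(n+3/2)}{\sqrt{\pi}\,(n+1)\,\Gamma(n+3)}$ gives $a_n\,\EE L_{B^2}^{2n}=\frac{2n+1}{n^2(n+1)^2(n+2)}$, and besides $a_1=1$ you also need $a_2=\frac13$, which cancels the $u^4$ term of $V_R^2$ and leaves the $\frac43u^4$ term of $\Delta_RV_R$, so both series start at $n=3$ before the shift.
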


\begin{proof}
    Inserting \eqref{eqn:arcsin-exp} into \eqref{eqn:V-in-u} yields
    \begin{equation*}
        m_1(R) = \frac{4}{R}\sum_{n\geq0}\frac{1}{2n+3}
        \binom{2n}{n}\frac{\EE L_{B^2}^{2n+3}}{4^{n}R^{2n}} .
    \end{equation*}
    By \eqref{eqn:L-moments} we have
    $\EE L_{B^2}^{2n+3}=16\,\Gamma(n+3)\bigl/\bigl(\sqrt\pi\,(2n+5)(2n+7)
        \Gamma(n+\frac72)\bigr)$. Using \eqref{eqn:binomial-Gamma-rel}, the two gamma quotients that remain are
    \begin{equation*}
        \frac{\Gamma\bigl(n+\frac12\bigr)}{\Gamma\bigl(n+\frac72\bigr)}
        = \frac{8}{(2n+1)(2n+3)(2n+5)},
        \qquad
        \frac{\Gamma(n+3)}{\Gamma(n+1)} = (n+1)(n+2).
    \end{equation*}
    Collecting the constants gives \eqref{eqn:m1-direct}.

    Set $c_n:=\frac{4^n}{2n^2} \binom{2n}{n}^{-1}$, so that
    by \eqref{eqn:arcsin2-exp} $S(u) = \sum_{n\geq 1} c_n u^{2n}$ and
    note that for $n\geq 2$
    \begin{equation*}
        (2n)\, c_n - 2(n-1)\,c_{n-1} = - \frac{n}{n-1}\, c_n.
    \end{equation*}
    Therefore
    \begin{align*}
        u(1-u^2)S'(u)
         & = 2c_1u^2 + \sum_{n\geq 2} \bigl(2nc_n-2(n-1)c_{n-1}\bigr) u^{2n}
        = 2u^2 - \sum_{n\geq 2} \frac{n}{n-1}\, c_n \, u^{2n}.
    \end{align*}
    Thus, by \eqref{eqn:V2-in-u} and \eqref{eqn:DV-in-u},
    \begin{align*}
        V_R(\ell)^2             & = 4R^4 \sum_{n\geq 3} \frac{2n-1}{n-1}\,c_n\, u^{2n},                         &
        \Delta_R(\ell)V_R(\ell) & = R^4 \Bigl(\frac{4}{3}u^4 - \sum_{n\geq 3} \frac{n}{n-1}\, c_n u^{2n}\Bigr).
    \end{align*}
    For $u=L_{B^2}/R$ and taking expectations we find
    \begin{align*}
        m_2(R) & = 4 \sum_{n\geq 3} \frac{2n-1}{n-1}\, \frac{c_n\, \EE L_{B^2}^{2n}}{R^{2(n-2)}},                        &
        \mu(R) & = \frac{4}{3} \EE L_{B^2}^4 - \sum_{n\geq 3} \frac{n}{n-1}\, \frac{c_n\, \EE L_{B^2}^{2n}}{R^{2(n-2)}}.
    \end{align*}

    Using \eqref{eqn:L-moments} and \eqref{eqn:binomial-Gamma-rel} we find $\frac{4}{3} \EE L_{B^2}^4 = \frac{5}{36}$ and
    \begin{equation*}
        c_n\, \EE L_{B^2}^{2n}
        = \frac{\sqrt{\pi} \, \Gamma(n+1)}{2n^2\, \Gamma\bigl(n+\frac{1}{2}\bigr)}
        \cdot
        \frac{4 \, \Gamma\bigl(n+\frac{3}{2}\bigr)}{\sqrt{\pi} (n+1)\, \Gamma(n+3)}
        = \frac{2n+1}{n^2(n+1)^2(n+2)}.
    \end{equation*}
    This completes the proof of \eqref{eqn:m2-direct} and \eqref{eqn:mu-direct} after shifting the index from $n$ to $n+2$.
\end{proof}

\subsection{Reduction to dilogarithms}

We now show that \eqref{eqn:m1-direct}--\eqref{eqn:mu-direct} can be expressed in closed form using the following functions for $R\geq1$:
\begin{align*}
    \cL_1(R) & := (R^2-1)\,\log\Bigl(1-\frac1{R^2}\Bigr),
             &
    \cL_2(R) & := \Li_2\Bigl(\frac1{R^2}\Bigr) = \int_{0}^{1} \frac{1}{z}\, \ln\Bigl(\frac{R^2}{R^2-z}\Bigr)\,\dint z,
    \intertext{and}
    \cX_1(R) & := (R^2-1)\,\artanh\frac1R,
             &
    \cX_2(R) & := \chi_2\Bigl(\frac1R\Bigr) = \int_{0}^1 \frac{1}{z} \artanh\Bigl(\frac{z}{R}\Bigr) \, \dint z.
\end{align*}
Here $\Li_2$ is the classical dilogarithm function and $\chi_2$ is the Legendre chi function and we have $\chi_2(z)=\frac{1}{2} \bigl(\Li_2(z)-\Li_2(-z)\bigr)$.
Their series expansions are
\begin{align}\label{eqn:L-series}
    \cL_1(R) & = -1 + \sum_{n\geq 1} \frac{1}{n(n+1)}\, \frac{1}{R^{2n}},                     &
    \cL_2(R) & = \sum_{n\geq 1} \frac{1}{n^2} \, \frac{1}{R^{2n}},                              \\
    \cX_1(R) & = R-\sum_{n\geq 0} \frac{2}{(2n+1)(2n+3)}\, \frac{1}{R^{2n+1}},                &
    \cX_2(R) & = \sum_{n\geq 0} \frac{1}{(2n+1)^2}\, \frac{1}{R^{2n+1}}. \label{eqn:X-series}
\end{align}
Note that $\cL_1(1) =\cX_1(1)= 0$, $\cL_2(1) = \Li_2(1)= \frac{\pi^2}{6}$ and $\cX_2(1) = \chi_2(1) = \frac{\pi^2}{8}$.

\begin{proposition}[the three moments are dilogarithmic]\label{prop:dilog}
    For every $R\geq1$,
    \begin{align*}
        3\pi\,m_1(R)
         & = 15R^5+20R^3-3R
        \nonumber
        \\
         & \qquad -3\bigl(5R^4-2R^2+1\bigr)\,\cX_1(R)-12R^2\bigl(3R^2-1\bigr)\, \cX_2(R),
        \\[2pt] \nonumber
        12\, m_2(R)
         & = 60R^6+18R^4+8R^2-5
        \\ \nonumber
         & \qquad + 12R^2\bigl(5R^4-2R^2+1\bigr)\,\cL_1(R)-24R^4\bigl(3R^2-1\bigr)\,\cL_2(R),
        \\[2pt] \nonumber
        24\,\mu(R)
         & = -12R^6-10R^2+5
        \\
         & \qquad -6R^2\bigl(2R^4-R^2+1\bigr)\,\cL_1(R)
        +12R^6\,\cL_2(R).
    \end{align*}
\end{proposition}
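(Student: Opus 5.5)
The plan is to prove each identity by comparing coefficients of powers of $1/R$ on both sides. We use the rational series of Proposition~\ref{prop:direct} and the expansions \eqref{eqn:L-series}--\eqref{eqn:X-series}. All series involved converge absolutely for $R\geq1$, so both sides of each claimed identity are analytic functions of $w=1/R$ on $|w|<1$, continuous up to $w=1$. It therefore suffices to check equality of Laurent coefficients in $w$; the case $R=1$ then follows by continuity (Abel).

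For $m_2$ and $\mu$ we substitute $\cL_1=-1+\sum_{n\ge1}\frac{R^{-2n}}{n(n+1)}$ and $\cL_2=\sum_{n\ge1}\frac{R^{-2n}}{n^2}$ into the right-hand side. Multiplying by the polynomial prefactors $12R^2(5R^4-2R^2+1)$, etc., shifts indices by $2,4,6$. The coefficient of $R^{-2N}$ is then a combination of terms $\frac{1}{(N+j)(N+j+1)}$ and $\frac{1}{(N+j)^2}$ with $j\in\{1,2,3\}$.

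\begin{itemize}
\item First, check that the polynomial part exactly cancels the nonnegative powers $R^6,R^4,R^2,R^0$ generated by the low-index terms. For $\mu$ one must instead obtain the constant $24\cdot\frac5{36}=\frac{10}{3}$.
\item Second, for $N\ge1$, reduce the combination to a single rational function by partial fractions. Compare it with $48\frac{(2N+3)(2N+5)}{(N+1)(N+2)^2(N+3)^2(N+4)}$ for $12m_2$, and with $-24\frac{2N+5}{(N+1)(N+2)(N+3)^2(N+4)}$ for $24\mu$.
\end{itemize}

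Both are finite rational-function identities in $N$. They can be verified by clearing denominators, or by matching the partial fraction decompositions at the poles $-1,\dots,-4$. The squared poles at $-2,-3$ must come from the $\cL_2$ terms, which explains why the $\cL_2$ prefactor carries the factor $(3R^2-1)$, respectively $R^6$.

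For $m_1$ we proceed in the same way with odd powers. We substitute $\cX_1=R-\sum\frac{2R^{-2n-1}}{(2n+1)(2n+3)}$ and $\cX_2=\sum\frac{R^{-2n-1}}{(2n+1)^2}$. We then check two things:
\begin{itemize}
\item the coefficients of $R^5,R^3,R^1$ cancel against $15R^5+20R^3-3R$ (the contributions come from $-3(5R^4-2R^2+1)R$ and from the first few series terms);
\item the coefficient of $R^{-2N-1}$ equals $3\cdot512\frac{(N+1)(N+2)}{(2N+1)(2N+3)^2(2N+5)^2(2N+7)}$.
\end{itemize}
The second check is again a partial-fraction identity, now with double poles at $-\frac32,-\frac52$ coming from $\cX_2$.

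The main obstacle is the bookkeeping in the low-order terms. The index shifts mean that the first two or three coefficients ($N=0,1,2$) receive contributions from only some of the shifted series, and these must cancel the explicit polynomial parts exactly. I would handle this by extending each series formally to negative indices where the general term vanishes or is singular, and treating the boundary terms separately. As an independent check, I would verify the identities at $R=1$ against the values $m_1(1)=\frac{32}{3\pi}-\pi$, $m_2(1)=\frac{81-8\pi^2}{12}$ and $\mu(1)=\frac{2\pi^2-17}{24}$, using $\cX_2(1)=\pi^2/8$ and $\cL_2(1)=\pi^2/6$. For example, $3\pi m_1(1)=32-3\pi^2=32-3-12\cdot 2\cdot\frac{\pi^2}{8}+3$, which matches.
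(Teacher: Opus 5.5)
Your proposal is correct and is essentially the paper's proof read in the opposite direction. The paper splits the rational coefficients of Proposition~\ref{prop:direct} into partial fractions of the shapes $\frac{1}{(2n+1)(2n+3)}$, $\frac{1}{(2n+1)^2}$ (resp.\ $\frac{1}{n(n+1)}$, $\frac{1}{n^2}$) after index shifts, and resums them with the series \eqref{eqn:L-series}--\eqref{eqn:X-series}; you instead expand the right-hand sides and match coefficients, which checks the same partial-fraction identities and the same low-order cancellations (which I verified do hold for all three moments).
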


\begin{proof}
    For the sequence of rational coefficients of the series expansions of $m_1(R)$ in Proposition~\ref{prop:direct}
    we have the partial fraction decomposition
    \begin{align*}
         & \frac{(n+1)(n+2)}{(2n+1)(2n+3)^2(2n+5)^2(2n+7)}                                                             \\
         & \qquad = \frac{1}{512}\Bigl(\frac{2}{(2n+1)(2n+3)} - \frac{4}{(2n+3)(2n+5)} + \frac{10}{(2n+5)(2n+7)}\Bigr) \\
         & \qquad\qquad \qquad + \frac{1}{128} \Bigl(\frac{1}{(2n+3)^2} - \frac{3}{(2n+5)^2}\Bigr).
    \end{align*}
    Summing over $n\in\NN_0$ and using the series expansions \eqref{eqn:X-series} we find
    \begin{align*}
        \pi\, m_1(R) & = \bigl(R-\cX_1(R)\bigr) - 2R^2\Bigl(R-\cX_1(R)-\frac{2}{3R}\Bigr)                                     \\
                     & \qquad + 5R^4\Bigl(R-\cX_1(R)-\frac{2}{3R}-\frac{2}{15R}\Bigr)                                         \\
                     & \qquad +4 R^2\Bigl(\cX_2(R)-\frac{1}{R}\Bigr) - 12 R^4\Bigl(\cX_2(R)-\frac{1}{R}-\frac{1}{9R^3}\Bigr).
    \end{align*}
    The other identifications follow similarly using \eqref{eqn:L-series}.
\end{proof}

\begin{remark}\label{rem:asymptotics}
    By  \eqref{eqn:m1-direct}--\eqref{eqn:mu-direct}:
    \begin{align*}
        m_1(R)
         & = \frac{1024}{1575\pi}\Biggl(\frac1R+\frac1{7R^3}+\cdots\Biggr), &
        m_2(R)
         & = \frac{7}{72} \Biggl(\frac1{R^2}+\frac{9}{25R^4}+\cdots\Biggr),   \\
        \mu(R)
         & = \frac5{36}-\frac7{480R^2}-\frac1{200R^4}-\cdots ,
    \end{align*}
    the three leading terms being $\frac{4}{3R}\, \EE L_{B^2}^3$, $A\frac{1}{R^2}$ with
    $A=\frac7{72}$ as in \eqref{eqn:m2-asymptotics}, and
    $\frac43\,\EE L_{B^2}^4=\frac5{36}$.  This extends \eqref{eqn:intro-p44-rate} to
    \begin{align*}%\label{eqn:p44-full}
        p_{4,4}^R(B^2)
         & = 1-\frac{35}{12\pi^2}
        -\frac{2048}{525\pi^2R}
        +\frac{49}{120\pi^2R^2}
        -\frac{2048}{3675\pi^2R^3}
        +\frac{27}{200\pi^2R^4}
        +O\!\bigl(R^{-5}\bigr),
        \intertext{and likewise}
        p_{3,3}^R(B^2)
         & =1-\frac{1024}{525\pi^2R}-\frac{1024}{3675\pi^2R^3}
        +O\!\bigl(R^{-5}\bigr),
    \end{align*}
    refining Corollary~\ref{cor:intro-circumradius}.
\end{remark}

\begin{remark}[hypergeometric representation]%\label{rem:3F2-at-one}
    In the notation of \cite{DLMF}*{Ch.~16} the three series of
    Proposition~\ref{prop:direct} can also be represented by
    \begin{equation*}
        m_1 = \frac{1024}{1575\pi R}\,
        {}_3F_2\Bigl(\tfrac12,\tfrac32,3;\,\tfrac72,\tfrac92;\,\tfrac1{R^2}\Bigr),
        \quad
        m_2 = \frac{7}{72R^2}\,
        {}_4F_3\Bigl(1,2,3,\tfrac92;\,\tfrac52,5,6;\,\tfrac1{R^2}\Bigr),
    \end{equation*}
    and
    \begin{equation*}
        \mu=\frac{5}{24}-\frac{5}{72}\,
        {}_4F_3\Bigl(1,1,3,\tfrac72;\,\tfrac52,4,5;\,\tfrac1{R^2}\Bigr).
    \end{equation*}
\end{remark}

\section{Extremal bodies at given area}\label{sec:extremal}

By the Efron--Buchta identities in Theorem \ref{thm:R-buchta} the distribution of $f_0([K]_n^R)$
is determined by the moments of $\vol_2([K]_k^R)$. Blaschke's proof for the extremality of the disc for the Sylvester probability $p_{4,4}(K)$ hinges on the fact that
\begin{equation}\label{eqn:p44-aff}
    4\Bigl(1- \frac{\EE\vol_2([K]_3)}{\vol_2(K)}\Bigr) = \EE f_0([K]_4) = 3+ p_{4,4}(K).
\end{equation}
Hence for the affine convex hull the extremality of $\EE\vol_2([K]_3)$ is equivalent to the extremality of $p_{4,4}(K)$ and $\EE f_0([K]_4)$.

For a convex body $K\subset \RR^d$ Groemer \cite{Groemer:1974} showed that $\EE\vol_d([K]_n)^k$ is minimised uniquely by
ellipsoids of the same volume as $K$ for all $n\geq d+1$ and $k\geq 1$, that is,
\[
    \EE\vol_d([K]_n)^k \geq \EE\vol_d([r_K B^d]_n)^k,
\]
where $r_K := (\frac{1}{\kappa_d}\vol_d(K))^{\frac{1}{d}}$ such that $\vol_d(K) = \vol_d(r_K B^d)$.
See also Pfiefer \cite{Pfiefer:1990}, who extended this result even further, in particular to compact sets $K\subset \RR^d$.

Our goal in this section is to establish a corresponding result for the $R$-ball-convex hull.

\subsection{Extremiser for two random points and circumradius}

First, there is an important distinction to be made between the affine convex hull and the $R$-ball-convex hull, that is,
the $R$-segment between two distinct points $\bx_1,\bx_2\in K$ of an $r$-ball-convex body $K\subset \RR^d$ with $0<r\leq R$ has strictly positive area.
Hence, we actually want to prove that $\EE\vol_d([K]_n^R)$ is minimised by $r_KB^d$ for all $n\geq 2$.

However, the case $n=2$ is already determined by a result of Pfiefer \cite{Pfiefer:1990}*{Thm.\ 2} in full generality, whose result implies that $\EE\Phi(L_K)$ is minimised by $\EE\Phi(L_{r_KB^d})$ whenever $\Phi$ is a strictly increasing function and $(\bx_1,\bx_2)\mapsto \Phi\bigl(\frac{1}{2}\|\bx_1-\bx_2\|\bigr)$ is integrable.
In our case we consider $\vol_d([K]_2^R)^k = \vol_d(\mathrm{Sg}_R(L_K))^k=\Phi(L_K)$ where $\Phi(\ell):=\vol_d(\mathrm{Sg}_R(\ell))^k$ is strictly increasing and continuous for $\ell\in[0,R]$. Applying Pfiefer's theorem we immediately obtain the following.

\begin{corollary}[isoperimetric inequalities for two points at a fixed area]\label{cor:isoperimetric}
    Let $K\subset\RR^d$ be an $r$-ball-convex body. Then
    for every $k\geq 1$ and every $R\geq r$,
    \begin{equation*}%\label{eqn:iso-moments}
        \EE\vol_d\bigl([K]_2^R\bigr)^k \;\geq\;
        \EE\vol_d\bigl([r_K B^d]_2^R\bigr)^k,
    \end{equation*}
    with equality if and only if $K$ is a Euclidean ball.
    Consequently, for all $n\geq3$ and $R\geq r$, by \eqref{eqn:pn2}
    \begin{equation*}%\label{eqn:iso-pn2}
        p_{n,2}^R(K) \geq p_{n,2}^R(r_K B^d),
    \end{equation*}
    and by \eqref{eqn:circumradius-d}
    \begin{equation*}%\label{eqn:iso-circumradius}
        \PP(\varrho([K]_3)>R) \;\geq\;
        \PP\Bigl(\varrho([B^d]_3)>\frac{R}{r_K}\Bigr).
    \end{equation*}
\end{corollary}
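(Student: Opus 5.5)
The plan is to reduce everything to Pfiefer's rearrangement theorem \cite{Pfiefer:1990}*{Thm.~2} for the half-distance $L_K$, then transfer the inequality through the identities already established. First, $\vol_d([K]_2^R)$ depends only on $L_K$. By Lemma~\ref{lem:elementary}(b) and Lemma~\ref{lem:slicing}, $\conv_R\{\bX_1,\bX_2\}$ is a congruent copy of $\mathrm{Sg}_R(L_K)$. Hence
\[
\vol_d([K]_2^R)^k=\Phi_k(L_K),\qquad \Phi_k(\ell):=\vol_d(\mathrm{Sg}_R(\ell))^k .
\]
Here $L_K\le r\le R$ almost surely, because $K$ lies in a ball of radius $r$. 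So only the range $\ell\in[0,R]$ matters.

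Next I check that $\Phi_k$ is continuous and strictly increasing on $[0,R]$. By Lemma~\ref{lem:segment-volume},
\[
\vol_d(\mathrm{Sg}_R(\ell))=2\kappa_{d-1}\int_0^\ell\bigl(\sqrt{R^2-s^2}-\sqrt{R^2-\ell^2}\bigr)^{d-1}\dint s .
\]
The integrand is nonnegative and increasing in $\ell$ for each fixed $s$, and the domain $[0,\ell]$ grows. The integrand is strictly positive for $s<\ell$, so the volume is strictly increasing; continuity is clear. Raising to the power $k\ge1$ preserves both properties. The function is bounded, so integrability is automatic.

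Pfiefer's theorem then gives $\EE\Phi_k(L_K)\ge\EE\Phi_k(L_{r_KB^d})$ for every compact $K$ with $\vol_d(K)=\vol_d(r_KB^d)$. Strictly increasing $\Phi_k$ gives equality exactly for balls, up to null sets. Since $K$ is a convex body, equality forces $K$ to be a Euclidean ball. We need $r_K\le r$ so that $r_KB^d$ is admissible, i.e.\ its two-point hull is well defined with $R\ge r_K$. This holds because $K\subset B^d(\bc,r)$ gives $\vol_d(K)\le\kappa_dr^d$. This proves the moment inequality.

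The two consequences are formal. By \eqref{eqn:pn2},
\[
p_{n,2}^R(K)=\binom n2\frac{\EE\vol_d([K]_2^R)^{n-2}}{\vol_d(K)^{n-2}} .
\]
The denominator coincides for $K$ and $r_KB^d$, so the case $k=n-2\ge1$ yields the claim. For the circumradius, I apply Lemma~\ref{lem:structure} inside the almost surely spanned plane, using Lemma~\ref{lem:slicing} exactly as in the proof of Theorem~\ref{thm:circumradius-d}, but for general $K$. This gives $\PP(\varrho([K]_3)>R)=p_{3,2}^R(K)$ for $R\ge r$. Then I combine the $k=1$ inequality with the scaling $p_{3,2}^R(r_KB^d)=p_{3,2}^{R/r_K}(B^d)$ from \eqref{eqn:equivariance}. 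Finally, \eqref{eqn:circumradius-d} applies with $R/r_K\ge1$.

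The main obstacle is the equality case. We must check that Pfiefer's characterisation holds under the precise hypotheses needed: a strictly increasing $\Phi$ that is defined only on $[0,R]$ rather than on all of $[0,\infty)$. I would handle this by extending $\Phi_k$ strictly increasingly beyond $R$. The extension is harmless, since both $L_K$ and $L_{r_KB^d}$ are supported in $[0,r]$.
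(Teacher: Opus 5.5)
Your proposal is correct and follows the paper's route: the paper also writes $\vol_d([K]_2^R)^k=\Phi(L_K)$ with $\Phi(\ell)=\vol_d(\mathrm{Sg}_R(\ell))^k$ strictly increasing and continuous on $[0,R]$, applies Pfiefer's theorem directly, and reads off the two consequences from \eqref{eqn:pn2} and \eqref{eqn:circumradius-d}. Your extra checks fill in details the paper leaves implicit: $r_K\le r$; the identity $\PP(\varrho([K]_3)>R)=p_{3,2}^R(K)$ for general $K$, obtained from Lemma~\ref{lem:structure} in the spanned plane; the rescaling to $R/r_K\ge1$; and the observation that $L_K\le r\le R$, so the domain of $\Phi$ causes no trouble.
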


\subsection{Groemer-type inequality for ball-convex bodies in the plane}\label{sec:groemer}

Let $K\subset \RR^2$ be an $r$-ball-convex body and set $r_K=\sqrt{\frac{1}{\pi}\vol_2(K)}$.
For $n\geq 2$ and $k\geq 1$ we aim to show that $\EE\vol_2([K]_n^R)^k\geq \EE\vol_2([r_K B^2]_n^R)^k$ with equality if and only if $K$ is a disc.
For $n=3$, the limiting case $R\to \infty$ is the classical result of Blaschke, which he established using Steiner symmetrisation, see for example the presentation in \cite{Calka:2019}*{Sec.\ 3.1.3}. Steiner symmetrisation is also at the core of Groemer's extension \cite{Groemer:1974}.

For a unit vector $\bv\in\SS^1$, we write $S_{\bv} K$ for the Steiner symmetral of $K$ with respect to the line $\bv^\bot :=\{\bx \in \RR^2 : \langle\bx, \bv\rangle = 0\}$, see, for example, \cite{Sch14}*{Sec.\ 10.3}.
Steiner symmetrisation for ball-convex bodies appears to have been used first by Artstein-Avidan and Florentin \cite{AAF:2026}*{Sec.\ 4}. They show, in particular, that $S_{\bv} K$ is again $r$-ball-convex when $d=2$, see \cite{AAF:2026}*{Thm.\ 4.1}, and that for $d\geq 3$ this fails in general, see \cite{AAF:2026}*{Sec.\ 4.2}. This is the main obstruction when trying to extend our methods to dimensions $d\geq 3$. However, note that Theorem \ref{thm:R-groemer} for $n=2$ and $d\geq 3$ is exactly Corollary \ref{cor:isoperimetric}.

Since we may obtain $r_K B^2$ as a limit after a sequence of Steiner symmetrisations of $K$, our aim is to show the following.

\begin{proposition}[monotonicity under Steiner symmetrisation]\label{prop:steiner-monotone}
    With the same assumptions as in Theorem \ref{thm:R-groemer} and for $\bv\in\SS^1$, we have the following
    \begin{equation}\label{eqn:steiner-monotone}
        \EE\vol_2([K]_n^R)^k \geq \EE\vol_2([S_{\bv} K]_n^R)^k,
    \end{equation}
    with equality if and only if $K$ is symmetric with respect to the line orthogonal to $\bv$ that passes through the centroid of $K$.
\end{proposition}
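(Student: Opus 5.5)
We will follow Groemer's classical argument, with the convex hull replaced by $\conv_R$. Take $\bv=\be_2$, so $S_{\bv}$ symmetrises in the vertical direction. Write each random point as $\bX_i=(x_i,t_i)$. Conditionally on the abscissae $x_1,\dotsc,x_n$, each $t_i$ is uniform on the vertical chord $K\cap(\{x_i\}\times\RR)$. This chord is a segment $[a(x_i),b(x_i)]$ with midpoint $m_i$ and half-length $\lambda_i$. Steiner symmetrisation keeps the chord lengths and recentres every chord at $0$, so the abscissae have the same law for $K$ and for $S_{\bv}K$. It therefore suffices to compare the conditional expectations for fixed $x_1,\dotsc,x_n$.

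Write $t_i=m_i+\lambda_i s_i$ with $s_i$ i.i.d.\ uniform on $[-1,1]$. For fixed $x$'s and $s$'s, consider the one-parameter family of point sets
\[
P(\tau)=\{(x_i,\tau m_i+\lambda_i s_i)\},\qquad \tau\in[-1,1].
\]
This is a shadow-system-type parallel movement of the points in direction $\be_2$ with speeds $m_i$.

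The key geometric claim is that $F(\tau):=\vol_2(\conv_R P(\tau))$ is convex in $\tau$. Once we have it, note that replacing $s_i$ by $-s_i$ reflects $P(\tau)$ in the $x$-axis. Reflection preserves volume, and the law of the $s_i$ is symmetric. Hence $\EE F(\tau)^k=\EE F(-\tau)^k$ as a function of $\tau$. Here $\tau=1$ corresponds to $K$, $\tau=0$ to $S_{\bv}K$, and $\tau=-1$ to the reflection of $K$. Convexity of $F\geq0$ makes $F^k$ convex, so $\EE F(0)^k\leq\tfrac12(\EE F(1)^k+\EE F(-1)^k)=\EE F(1)^k$. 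This gives \eqref{eqn:steiner-monotone}.

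To prove convexity of $F$ we would use the planar description. Almost surely $P(\tau)$ lies in an open $R$-disc; this is needed since $\conv_R$ is taken in a disc containing $S_{\bv}K$, and $S_{\bv}K$ is $r$-ball-convex by \cite{AAF:2026}*{Thm.~4.1}. Then $\vol_2(\conv_R P)$ equals the area of the polygon spanned by the $R$-extremal points plus the sum of $\tfrac12V_R(\ell_e)$ over the edges $e$ of the ball-polygon, by \eqref{lem:ball-polygon} and the argument of Lemma~\ref{lem:structure}(ii).

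Alternatively, and more robustly, we would invoke the convexity result of Artstein-Avidan and Florentin \cite{AAF:2026}, which underlies their proof that Steiner symmetrisation preserves ball-convexity and decreases area of $\conv_R$. Concretely, we write
\[
\vol_2(\conv_R P(\tau))=\int \bigl(\beta_\tau(x)-\alpha_\tau(x)\bigr)\,\dint x,
\]
where $\alpha_\tau$ and $\beta_\tau$ are the lower and upper boundary functions. We then aim to show that the upper boundary $\beta_\tau(x)$ is convex in $\tau$ for each $x$, and likewise $-\alpha_\tau(x)$. For $\beta_\tau(x)$ one uses that the upper boundary of an $R$-ball-convex hull of points is the lower envelope over admissible upper arcs. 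Equivalently, $\beta_\tau(x)=\sup\{y:(x,y)\in\conv_R P(\tau)\}$, and we would try to express it as a maximum over subsets of at most two generating points of a function jointly convex under parallel movement.

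This convexity step is the main obstacle. Unlike affine hulls, a parallel movement does not map $R$-arcs to $R$-arcs, so the classical linearity argument of Rogers--Shephard fails. Our first attempt would be to prove convexity of $\tau\mapsto\vol_2(\conv_R\{\bp(\tau),\bq(\tau)\})$ and of each triangle-plus-caps term using \eqref{eqn:V-derivative}. If that fails, we would fall back on the two-point lemma \cite{AAF:2026}*{Prop.~3.2}, applied to the relative displacement of consecutive vertices.

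For the equality case, strict convexity is needed whenever the midpoints $m(x)$ are not all equal to a constant, which is the centroid height. In that situation there is a positive-probability set of configurations on which $F$ is strictly convex, for instance configurations with $f_0=2$, where $F=V_R$ of a length $\ell(\tau)$ with $\ell$ nonconstant and $V_R$ strictly convex. This gives strict inequality unless $K$ is symmetric about a horizontal line, which then must pass through the centroid.
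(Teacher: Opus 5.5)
Your reduction is the paper's own. You use the same chord parametrisation $t_i=m_i+\lambda_i s_i$ and the same linear family joining $K$, $S_{\bv}K$ and the reflected body; your $\tau$ is the paper's $1-s$. The reflection argument giving $\EE F(1)^k=\EE F(-1)^k$ is the same, and so is the passage from convexity of $F\geq0$ to convexity of $F^k$. You call the convexity of $\tau\mapsto\vol_2(\conv_R P(\tau))$ the main obstacle, but the paper does not prove it either: it takes it directly from \cite{AAF:2026}*{Thm.~3.7}. With that citation, your proof of the inequality is complete.

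Do not rely on the self-contained routes you sketch, since neither works as stated.

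The fibrewise plan fails because, unlike the classical convex hull, $\conv_R P(\tau)$ does not keep a fixed projection onto the $x$-axis. If two of the points span a steep chord, the arcs of their $R$-segment bulge beyond both abscissae. The extreme point in direction $\be_1$ is then an interior point of an arc, and in general it moves with $\tau$ when $m(x_1)\neq m(x_2)$. At a fixed $x$ crossed by that extreme point, the fibre is empty on one side of some $\tau_0$ and has length of order $\sqrt{|\tau-\tau_0|}$ on the other. Hence neither $\beta_\tau(x)$ nor $\beta_\tau(x)-\alpha_\tau(x)$ is convex in $\tau$; only their integral over $x$ is.

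The polygon-plus-caps formula gives only partial information. The shoelace area is affine in $\tau$ and the caps $\tfrac12V_R(\ell_e(\tau))$ are convex, but only on intervals where the combinatorial type is constant. Continuity across type changes does not yield global convexity unless you also show that every kink there is convex. Applying \cite{AAF:2026}*{Prop.~3.2} edge by edge does not address this. Global convexity is exactly what the cited theorem supplies.

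Your equality argument is a valid and slightly simpler variant of the paper's. The paper uses the set $E_{12}$: all points $R$-extremal, with an adjacent pair satisfying $m(t_1)\neq m(t_2)$. You instead take two points with $m(x_1)\neq m(x_2)$ and all others in the interior of their $R$-segment. This set is open and non-empty because $S_{\bv}K$ is $r$-ball-convex. On it, $F(\tau)=V_R(\ell_{12}(\tau))$ near $\tau=0$. This is strictly convex, because $\ell_{12}$ is half the norm of a non-constant affine map and $V_R$ is increasing and strictly convex. So you do not need the polygon term at all. Combined with global convexity, this gives $F(0)<\tfrac12\bigl(F(1)+F(-1)\bigr)$ on a set of positive measure.
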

\begin{proof}
    Assume w.l.o.g.\ that $\bv =\be_2$ and $K\subset rB^2$. For the compact interval
    \[
        I := \{t\in \RR : \text{there is $y\in\RR$ such that $(t,y) \in K$}\}
    \]
    we may write
    \begin{equation*}
        K = \{(t,y) : t\in I, f_-(t)\leq y \leq f_+(t)\},
    \end{equation*}
    where $f_+$ is concave and $f_-$ is convex on $I$ and both are uniquely determined by $K$. We set $g:= \frac{1}{2}(f_+-f_-)$ as the chord half-length and $m:=\frac{1}{2}(f_+ + f_-)$ as the midpoint curve. Then
    \begin{equation*}
        S_{\bv} K = \{(t,y) : t\in I, |y|\leq g(t) \},
    \end{equation*}
    and for the reflection $K^\dag$ of $K$ in $\bv^\bot$ we have
    \begin{equation*}
        K^\dag = \{(t,-y) : (t,y)\in K\} = \{(t,y) : t\in I, -f_+(t) \leq y \leq -f_-(t)\}.
    \end{equation*}

    Let $\bx_1,\dotsc,\bx_n\in K$ be arbitrary. Then $\bx_i=(t_i,y_i)$, where $y_i = m(t_i)+g(t_i)u_i$ for $t_i\in I$ and $u_i\in[-1,1]$ are uniquely determined for all $i\in\{1,\dotsc,n\}$.
    For $\bt\in I^n$ and $\bu\in [-1,1]^n$ we define
    \begin{equation*}
        \bm := (m(t_1),\dotsc,m(t_n))\in \RR^n \qquad \text{and} \qquad
        g\odot \bu := (g(t_1)u_1,\dotsc,g(t_n)u_n) \in \RR^n,
    \end{equation*}
    where $\odot$ denotes the coordinatewise or Hadamard product of vectors.

    For $\ba, \bb\in \RR^n$ define
    \[
        F_{\ba}(\bb) := \vol_2(\conv_R\{ (a_i,b_i) : i =1,\dotsc,n\})^k,
    \]
    so that
    \begin{equation*}
        F_{\bt}(\bm+g\odot\bu) = \vol_2(\conv_R\{\bx_1,\dotsc,\bx_n\})^k.
    \end{equation*}
    In particular, by Fubini's theorem, we find
    \begin{align*}
        \EE\vol_2([K]_n^R)^k         & = \int_{I^n} \biggl(\int_{[-1,1]^n} F_{\bt}(\bm + g\odot\bu)\, \dint\bu \biggr) \Bigl(\prod_{i=1}^n \frac{g(t_i)}{\vol_2(K)}\Bigr) \,\dint \bt,  \\
        \EE\vol_2([S_{\bv} K]_n^R)^k & = \int_{I^n} \biggl(\int_{[-1,1]^n} F_{\bt}(g\odot\bu)\, \dint\bu \biggr) \Bigl(\prod_{i=1}^n \frac{g(t_i)}{\vol_2(K)}\Bigr) \,\dint \bt,        \\
        \EE\vol_2([K^\dag]_n^R)^k    & = \int_{I^n} \biggl(\int_{[-1,1]^n} F_{\bt}(-\bm + g\odot\bu)\, \dint\bu \biggr) \Bigl(\prod_{i=1}^n \frac{g(t_i)}{\vol_2(K)}\Bigr) \,\dint \bt.
    \end{align*}
    Since $\EE\vol_2([K]_n^R)^k = \EE\vol_2([K^\dag]_n^R)^k$, inequality \eqref{eqn:steiner-monotone} follows if
    \begin{equation}\label{eqn:FT-convex}
        \frac{1}{2} \Bigl(F_{\bt}(\bm + g\odot \bu) + F_{\bt}(-\bm+g\odot\bu)\Bigr) \geq F_{\bt}(g\odot \bu).
    \end{equation}
    By \cite{AAF:2026}*{Thm.\ 3.7}, we find that, using their language,
    \[
        h(s) := \vol_2(\conv_R\{\bx_i + s\alpha_i \bv : i=1,\dotsc,n\}) \qquad \text{for $s\in [0,2]$},
    \]
    is convex, where $\alpha_i := - m(t_i)$ for $i=1,\dotsc,n$. Then
    \[
        h(0)^k = F_{\bt}(\bm+g\odot\bu),\quad
        h(1)^k = F_{\bt}(g\odot\bu),\quad
        h(2)^k = F_{\bt}(-\bm+g\odot\bu),
    \]
    and since $s\mapsto h(s)^k$ is convex, $h(1)^k \leq \frac{1}{2}(h(0)^k+h(2)^k)$ yields \eqref{eqn:FT-convex}. This verifies \eqref{eqn:steiner-monotone}.

    To finish the proof, we only need to show the equality statement of \eqref{eqn:steiner-monotone}. Note first that $K$ is symmetric about some line orthogonal to $\bv$ if and only if $m(t)$ is a constant function.
 So, assume that $t\mapsto m(t)$ is not constant.
    Denote by $E_{12}\subset I^n\times [-1,1]^n$ the set of configurations such that the $n$ points $\bz_i = (t_i,g(t_i)u_i)\in S_{\bv} K$, $i=1,\dotsc,n$, are in general position, all are $R$-extremal for $\conv_R\{\bz_1,\dotsc,\bz_n\}$, $\bz_1,\bz_2$ are adjacent and $m(t_1)\neq m(t_2)$. Then $E_{12}$ is open and non-empty and therefore has positive Lebesgue measure.
    For $(\bt,\bu)\in E_{12}$ set $\bx_i:=(t_i,m(t_i)+g(t_i)u_i)\in K$ and $\alpha_i:=-m(t_i)$ for $i=1,\dotsc,n$ and consider $M(s) := \conv_R\{\bx_i+s\alpha_i\bv : i=1,\dotsc,n\}$.
    Note that $M(1) = \conv_R\{\bz_1,\dotsc,\bz_n\}$ and $\bz_1,\bz_2$ determine an $R$-ball edge of $M(1)$ and since the combinatorial type of $M(s)$ is locally constant, there exists $\varepsilon>0$ such that $\bx_1+s\alpha_1\bv,\bx_2+s\alpha_2\bv$ also determines an $R$-ball edge of $M(s)$ for all $|s-1|<\varepsilon$.
    Furthermore, by \eqref{eqn:V-derivative}, $\ell\mapsto V_R(\ell)$ is strictly convex and therefore
    \[
        V_R(\ell_{12}(s))
        = V_R\Bigl(\frac{1}{2}\|(\bx_1-\bx_2) + s (m(t_2)-m(t_1))\bv\|\Bigr)
    \]
    is strictly convex for $s\in[0,2]$, since $m(t_1)\neq m(t_2)$ by assumption.
    As before $h(s) = \vol_2(M(s))$ is convex for $s\in[0,2]$, and in particular for $|s-1|<\varepsilon$
    \begin{equation*}
        h(s) = \vol_2(\conv\{\bx_i+s\alpha_i\bv: i=1,\dotsc,n\}) + \frac{1}{2}\sum_{\text{$i<j$: $(\bx_i,\bx_j)$ $R$-edge of $M(s)$}} V_R(\ell_{ij}(s)),
    \end{equation*}
    is strictly convex since $V_R(\ell_{12}(s))$ is a summand.
    Thus, in $E_{12}$, the inequality \eqref{eqn:FT-convex} is strict and therefore \eqref{eqn:steiner-monotone} is a strict inequality if $m$ is not constant. This shows that equality in \eqref{eqn:steiner-monotone} implies that $t\mapsto m(t)$ is constant and therefore $K$ is symmetric about the line through its centroid orthogonal to $\bv$, as claimed.
\end{proof}

\begin{proof}[Proof of Theorem \ref{thm:R-groemer}]
    Assume w.l.o.g.\ $K\subset rB^2$.
    There is a sequence $(\bv_i)_{i\in \NN_0}$ such that the sequence $(K_i)_{i\in \NN_0}$ defined by $K_0:= K$ and $K_{i} := S_{\bv_{i-1}} K_{{i-1}}$ for $i=1,\ldots$ converges to $r_K B^2$ in the Hausdorff metric, see for example \cite{AGM:2015}*{Thm.\ 1.1.16}.
    By Proposition \ref{prop:steiner-monotone} the sequence $\EE \vol_2([K_i]_n^R)^k$ is non-increasing,
    since $\vol_2(K_i) = \vol_2(K)$ for all $i\in\NN_0$.
    Furthermore $K_1 = S_{\bv_0} K \subset S_{\bv_0}(rB^2) = rB^2$. Thus, by induction, $K_i\subset rB^2$ and therefore $\EE\vol_2([K_i]_n^R)^k \leq \vol_2(rB^2)^k$ for all $i\in \NN_0$.
    Continuity of $(\bx_1,\ldots,\bx_n)\mapsto \vol_2(\conv_R\{\bx_1,\dotsc,\bx_n\})^k$ follows by, for example, \cite{AAF:2026}*{Prop.\ 3.8}.
    We estimate
    \begin{align*}
         & \Bigl|\EE\vol_2([K_i]_n^R)^k - \EE\vol_2([r_K B^2]_n^R)^k\Bigr|                                                      \\
         & \qquad= \frac{1}{\vol_2(K)^n} \Biggl| \int_{(K_i)^n\triangle (r_K B^2)^n} \vol_2(\conv_R\{\bx_1,\dotsc,\bx_n\})^k \,
        \dint\bx_1 \dotsc \dint\bx_n\Biggr|                                                                                     \\
         & \qquad\leq \frac{\vol_2(rB^2)^k}{\vol_2(K)^n}\, \vol_{2n}\Bigl((K_i)^n\triangle (r_K B^2)^n\Bigr)                    \\
         & \qquad\leq \frac{\vol_2(rB^2)^k}{\vol_2(K)^n}\, n \vol_2(rB^2)^{n-1}\, \vol_{2}\Bigl(K_i\triangle r_K B^2\Bigr),
    \end{align*}
    where $\triangle$ denotes the symmetric difference of sets and in the last inequality we used that
    \begin{equation*}
        (K_i)^n\triangle (r_K B^2)^n \subset \bigcup_{i=1}^n (rB^2)^{i-1}\times (K_i\triangle r_K B^2) \times (rB^2)^{n-i}.
    \end{equation*}
    Since $\lim_{i\to\infty} \vol_2(K_i\triangle r_K B^2) =0$, we conclude
    \begin{equation*}
        \lim_{i\to\infty} \EE\vol_2([K_i]_n^R)^k = \EE\vol_2([r_K B^2]_n^R)^k.
    \end{equation*}

    For the equality case, let $\bv\in\SS^1$ be arbitrary. Then
    \begin{equation*}
        \EE \vol_2([r_K B^2]_n^R)^k = \EE\vol_2([K]_n^R)^k \geq \EE\vol_2([S_{\bv}K]_n^R)^k \geq \EE \vol_2([r_K B^2]_n^R)^k,
    \end{equation*}
    and therefore, by the equality condition of Proposition \ref{prop:steiner-monotone}, $K$ is symmetric about a line orthogonal to $\bv$ that passes through the centroid of $K$. Since $\bv$ was arbitrary, this shows that $K$ is a disc.
\end{proof}

\subsection{Isoperimetric inequality for the ball-convex analogue of affine surface area}
\label{sec:c-affine}

Schütt, Werner and Yalikun \cite{SWY:2025} recently derived the $r$-ball-convex analogue of the affine surface area for an $r$-ball-convex body $K\subset \RR^d$
\begin{equation*}
    \as^r(K) = \int_{\partial K} \prod_{i=1}^{d-1}\bigl(\kappa_i(K,\bx)-r^{-1}\bigr)^{\frac{1}{d+1}} \, \dint\bx,
\end{equation*}
where $\kappa_i(K,\cdot)$ denote the principal curvatures of $\partial K$.
In the plane, $\as^r(K)$ also appeared earlier in \cite{FKV:2014}.

In \cite{SWY:2025}, the authors gave the inequality
\begin{equation}\label{eqn:SWY-ineq}
    \as^r(K) \leq \as(K) \leq \as(r_K B^d),
\end{equation}
which follows by monotonicity in $r$ and the classical affine isoperimetric inequality.
Artstein-Avidan and Chor \cite{AAC:2025} proved the inequality
\begin{equation}\label{eqn:AAC_ineq}
    \as^r(K) \leq \as^r\Bigl(\frac{dr}{d+1}B^d\Bigr),
\end{equation}
with equality if and only if $K=\frac{d}{d+1}B^d$.

In the case when $K\subset \RR^2$ is an $r$-ball-convex body of class $C^2_+$, it was shown in \cite{FKV:2014}*{Thm.\ 1.1}  that
\begin{equation*}
    \lim_{n\to \infty} n^{\frac{2}{3}} \left( \vol_2(K) - \EE\vol_2([K]_n^r) \right) = c_2 \vol_2(K)^{\frac{2}{3}} \as^r(K),
\end{equation*}
where $c_2 := (2/3)^{1/3}\, \Gamma(5/3)$, see also \cites{FPV:2020, FP:2025}.
By Theorem \ref{thm:R-groemer}, we find for all $n\geq 2$ that
\begin{equation*}
    \vol_2(K) - \EE\vol_2([K]_n^r) \leq \vol_2(r_K B^2) - \EE\vol_2([r_K B^2]_n^r),
\end{equation*}
which yields Corollary \ref{cor:R-affine-iso} after multiplying by $n^{2/3}$ and  taking limits as $n\to\infty$ on both sides.

Note that for $r\to\infty$ the right-hand side of \eqref{eqn:R-affine-iso} monotonically increases to $2\pi r_K^{\frac{2}{3}} = \as(r_K B^2)$, therefore giving the classical affine isoperimetric inequality in the limit. Furthermore, for $d=2$ we recover \eqref{eqn:SWY-ineq}
\begin{equation*}
    \as^r(K)\leq \as^r(r_KB^2)\leq \as(r_KB^2).
\end{equation*}
Moreover, since $s\in[0,r]\mapsto s^2 (1-\frac{s}{r})$ has a unique maximum at $s=\frac{2}{3}r$ we also find
\begin{equation*}
    \as^r(K) \leq \as^r(r_K B^2) \leq \as^r\Bigl(\frac{2r}{3}B^2\Bigr),
\end{equation*}
which gives exactly \eqref{eqn:AAC_ineq}.
Hence \eqref{eqn:R-affine-iso} is strictly stronger than both known inequalities in the plane, but without the equality characterisation of \eqref{eqn:AAC_ineq}. It remains an open question if an extension of \eqref{eqn:R-affine-iso} holds true also for $d\geq 3$.

\subsection{Sylvester probabilities in the plane for four points}

Theorem \ref{thm:R-groemer} combined with \eqref{eqn:R-efron} yields
\begin{equation*}
    \EE f_0([K]_4^R) \leq \EE f_0([r_K B^2]_4^R),
\end{equation*}
with equality if and only if $K$ is a disc.

However, unlike in the case of the affine convex hull \eqref{eqn:p44-aff}, the extremisers of $\EE f_0([K]_4^R)$, respectively $\EE\vol_2([K]_3^R)$, do not directly determine extremisers of $p_{4,4}^R(K)$. The Efron--Buchta identities yield
\begin{equation*}
    1-p_{4,4}^R(K)  = 4 \frac{\EE\vol_2([K]_3^R)}{\vol_2(K)} - 6 \frac{\EE V_R(L_K)^2}{\vol_2(K)^2}.
\end{equation*}
The terms $\EE\vol_2([K]_3^R)$ and $\EE V_R(L_K)^2$ are minimised by $r_K B^2$, but appear with opposite signs.
We believe $\EE\vol_2([K]_3^R)$ should dominate the behaviour.

\begin{conjecture}[the disc as extremal body at a fixed area]\label{conj:p43-p44}
    Among all $r$-ball-convex bodies $K\subset\RR^2$ of given area, the disc is the unique maximiser of $p_{4,4}^R(K)$ for all $R\geq r$.
\end{conjecture}

The decomposition of Theorem~\ref{thm:sylvester-general} localises the
conjectured inequality into one term.
For $0\leq t\leq R$ set
\begin{align}\notag
    h_R(t)
    : & = 48\,\Delta_R(t)\,V_R(t) + 3\,V_R(t)^2 - 64\,t^4           \\
      & = 72 t R^2 h \gamma - 84 t^2 R^2 + 20 t^4 + 12 R^4\gamma^2,
    \label{eqn:h-def}
\end{align}
where $h:=\sqrt{R^2-t^2}$ and $\gamma:=\arcsin(t/R)$.
Then for an $r$-ball-convex body $K$ the second
identity of Theorem~\ref{thm:sylvester-general} combined with $p_{4,2}^R(K) = 6 \frac{\EE V_R(L_K)^2}{\vol_2(K)^2}$ gives
\begin{equation}\label{eqn:p43-decomp}
    1-p_{4,4}^R(K)
    = 4\frac{\EE\vol_2\bigl([K]_3\bigr)}{\vol_2(K)}
    + 6\frac{\EE V_R(L_K)}{\vol_2(K)}
    + \frac{\EE h_R(L_K)}{\vol_2(K)^2}.
\end{equation}
For fixed area the first and second terms are minimised by the disc, with equality
precisely for ellipses for the first term and the disc for the second summand.
The second and third terms are functionals of the law of $L_K$ alone and
one might hope to treat both cases separately by the rearrangement of \cite{Pfiefer:1990}. However, $t\mapsto h_R(t)$ is strictly decreasing, and so Pfiefer's argument actually shows that the third summand is maximised by the disc. This is quantified by the following lemma.

\begin{lemma}\label{lem:kernel}
    Let $R>0$. For $t\in (0,R)$,
    \begin{equation}\label{eqn:h-derivative}
        h_R'(t)
        = 24\,V_R(t)\,\frac{2R^2-3t^2}{h}- 64\,t^3
        < 0.
    \end{equation}
    So $h_R$ is strictly decreasing, with
    $h_R(t)=-\tfrac{112}{15R^2}\,t^6+O(t^8)$ as $t\downarrow0$.
\end{lemma}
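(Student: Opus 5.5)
The plan is a direct calculus argument in three steps: differentiate $h_R$ via the product rule, reduce the sign claim to a one-variable trigonometric inequality checked by a monotonicity argument, and read off the small-$t$ behaviour from the series of Proposition~\ref{prop:direct}.

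\emph{Derivative.} I differentiate $h_R=48\,\Delta_R V_R+3V_R^2-64t^4$ term by term, writing $h=\sqrt{R^2-t^2}$. I use \eqref{eqn:V-derivative}, namely $V_R'(t)=4t^2/h$, together with
\[
\Delta_R'(t)=h-\frac{t^2}{h}=\frac{R^2-2t^2}{h}.
\]
Then
\[
48(\Delta_R'V_R+\Delta_R V_R')=\frac{48(R^2-2t^2)V_R}{h}+192t^3,
\qquad 6V_RV_R'=\frac{24t^2V_R}{h}.
\]
Collecting the $V_R/h$ terms gives $24V_R(2R^2-3t^2)/h$. The polynomial terms are $192t^3-256t^3=-64t^3$. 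This is exactly \eqref{eqn:h-derivative}.

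\emph{Sign.} The inequality $h_R'<0$ is equivalent to $3V_R(t)(2R^2-3t^2)<8t^3h$.
\begin{itemize}
\item If $3t^2\geq 2R^2$, the left side is $\leq0$ while the right side is $>0$, so there is nothing to prove.
\item Otherwise, I scale by \eqref{eqn:equivariance} to $R=1$ and write $t=s=\sin\gamma$, $c=\cos\gamma$, so that $V_1=2(\gamma-sc)$. The claim becomes $\psi(\gamma)>0$ on $(0,\pi/2)$, where
\[
\psi(\gamma):=4s^3c-3(\gamma-sc)(2-3s^2).
\]
Since $\psi(0)=0$, it suffices to show $\psi'>0$. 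A short computation using $c^2=1-s^2$ and $(\gamma-sc)'=2s^2$ gives
\[
\psi'(\gamma)=12s^2c^2-4s^4-6s^2(2-3s^2)+18sc(\gamma-sc)=2s^4+18sc(\gamma-sc).
\]
This is strictly positive, because $\gamma>\sin\gamma\cos\gamma$ for $\gamma>0$.
\end{itemize}
This step is the main obstacle. The leading orders of the two sides cancel near $t=0$, which is the same cancellation behind the $t^6$ behaviour. So a crude comparison will not work, and one has to find a combination such as $\psi$ whose derivative has a visible sign. My first attempt is exactly this multiply-out-and-differentiate approach.

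\emph{Asymptotics.} I expand $h_R$ in $u=t/R$ using the series from the proof of Proposition~\ref{prop:direct}, with $c_n=\frac{4^n}{2n^2}\binom{2n}{n}^{-1}$:
\[
V_R^2=4R^4\sum_{n\geq3}\tfrac{2n-1}{n-1}c_nu^{2n},\qquad
\Delta_RV_R=R^4\Bigl(\tfrac43u^4-\sum_{n\geq3}\tfrac{n}{n-1}c_nu^{2n}\Bigr).
\]
\begin{itemize}
\item The $u^4$ term $48\cdot\tfrac43R^4u^4=64t^4$ cancels against $-64t^4$.
\item With $c_3=\tfrac{8}{45}$, the $u^6$ coefficient is $R^4\bigl(3\cdot4\cdot\tfrac52-48\cdot\tfrac32\bigr)c_3=R^4\bigl(\tfrac{16}{3}-\tfrac{64}{5}\bigr)=-\tfrac{112}{15}R^4$.
\end{itemize}
Since $R^4u^6=t^6/R^2$, this yields $h_R(t)=-\tfrac{112}{15R^2}t^6+O(t^8)$. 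The remainder is $O(t^8)$ because the series involve only even powers of $u$, and they converge for $u<1$.
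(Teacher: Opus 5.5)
Your proof is correct. The derivative computation and the small-$t$ expansion are essentially the paper's. The paper inserts the Taylor expansions $\Delta_RV_R=\tfrac43t^4-\tfrac{4}{15R^2}t^6+\dotsb$ and $V_R^2=\tfrac{16}{9R^2}t^6+\dotsb$ into \eqref{eqn:h-def}, and these are exactly the terms you extract from the series of Proposition~\ref{prop:direct}.

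The sign step is where you differ. The paper stays in the variable $t$. For $2R^2>3t^2$ it bounds the integrand of $V_R(t)=\int_0^t 4s^2(R^2-s^2)^{-1/2}\,\dint s$ by $4s^2/h$, which gives $V_R(t)<\frac{4t^3}{3h}$. Hence $24V_R(2R^2-3t^2)/h<32t^3(2R^2-3t^2)/(R^2-t^2)\le 64t^3$, using $2R^2-3t^2\le 2(R^2-t^2)$.

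So your remark that ``a crude comparison will not work'' is not right. The leading $t^3$ orders do cancel, but this bound is itself sharp to leading order at $t=0$. It still leaves a positive gap of $32t^5/(R^2-t^2)$ below $64t^3$, which is enough.

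Your route passes to $\gamma=\arcsin(t/R)$ and obtains the exact identity $\psi'(\gamma)=2\sin^4\gamma+18\sin\gamma\cos\gamma\,(\gamma-\sin\gamma\cos\gamma)$. This is positive on all of $(0,\pi/2)$, so your argument does not actually need the case split $3t^2\gtrless 2R^2$.

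The paper's version is shorter and avoids the trigonometric substitution. Yours gives a uniform, sign-definite certificate on the whole interval, at the cost of a slightly longer computation.
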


\begin{proof}
    The derivative \eqref{eqn:h-derivative} follows from \eqref{eqn:segment-2d}, \eqref{eqn:V-derivative} and
    $\Delta_R(t)V_R'(t)=4t^3$. If $2R^2-3t^2 \leq 0$, both summands of
    \eqref{eqn:h-derivative} are non-positive and the second is strictly
    negative.  If $0<2R^2-3t^2$, then, since the integrand of
    $V_R(t)=\int_0^t 4s^2(R^2-s^2)^{-1/2}\,\dint s$ has a decreasing
    denominator,
    \begin{equation*}%\label{eqn:V-upper}
        V_R(t)\;<\;\frac{4\,t^3}{3\sqrt{R^2-t^2}}\,,
    \end{equation*}
    so the first term of \eqref{eqn:h-derivative} is smaller than
    $32t^3(2R^2-3t^2)/(R^2-t^2)\leq64t^3$ since
    $2R^2-3t^2\leq2(R^2-t^2)$.  For the expansion insert
    $\Delta_RV_R=\tfrac43t^4-\tfrac{4}{15R^2}t^6-\tfrac{16}{105R^4}t^8+O(t^{10})$
    and $V_R^2=\tfrac{16}{9R^2}t^6+\tfrac{16}{15R^4}t^8+O(t^{10})$ into
    \eqref{eqn:h-def} and note that the quartic terms cancel identically.
\end{proof}

Since the second and third terms of \eqref{eqn:p43-decomp} depend only on the law of $L_K$,
we may look at them combined and see when we may still apply Pfiefer's argument.
For $0<t<R$, define
\begin{equation}\label{eqn:Amin-def}
    A_{\min}^R(t)
    := -\,\frac{h_R'(t)\,h}{24\,t^2}
    = \frac{8}{3}\,\Delta_R(t)-2\Bigl(\frac{R^2}{t^2}-\frac{3}{2}\Bigr)\,V_R(t),
\end{equation}
so that, for every $A>0$,
\begin{equation}\label{eqn:g-derivative}
    \bigl(6A\,V_R+h_R\bigr)'(t)
    =\frac{24\,t^2}{h}\,\bigl(A-A_{\min}^R(t)\bigr),
\end{equation}
and the combined kernel is non-decreasing on an interval $(0,\tau]$ exactly
when $A$ dominates $A_{\min}^R$ there.

\begin{lemma}[the threshold function]\label{lem:Amin}
 We have the following:
    \begin{enumerate}
        \item[(a)] $A_{\min}^R(t)=R^2A_{\min}^1(t/R)$ and
              \begin{equation*}
                  A_{\min}^1(t)=\tfrac{28}{15}\,t^3+O(t^5)\ \text{ as }
                  t\downarrow0,\qquad
                  \lim_{t\uparrow1}A_{\min}^1(t)=\pi .
              \end{equation*}
        \item[(b)] $A_{\min}^1$ is strictly increasing on $(0,1)$, with
              \begin{equation}\label{eqn:Amin-derivative}
                  \bigl(A_{\min}^1\bigr)'(t)
                  = \frac{4\,V_1(t)}{t^3}-\frac{16-20t^2}{3\sqrt{1-t^2}}
                  > 0 .
              \end{equation}
        \item[(c)] $t\mapsto A_{\min}^1(t)/t^3$ is strictly increasing on
              $(0,1)$. In particular
              \begin{equation}\label{eqn:Amin-cubic}
                  A_{\min}^1(t)
                  \leq \pi\, t^3
                  \qquad\text{for }0<t<1.
              \end{equation}
    \end{enumerate}
\end{lemma}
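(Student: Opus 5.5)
The plan is to work throughout with the explicit unit-radius formulas $\Delta_1(t)=t\sqrt{1-t^2}$ and $V_1(t)=2\arcsin t-2t\sqrt{1-t^2}$, together with $V_1'(t)=4t^2/\sqrt{1-t^2}$ from \eqref{eqn:V-derivative}. All three parts then reduce to comparing $V_1$ with an algebraic function of $t$. This comparison is always done the same way: both sides vanish at $t=0$, so it suffices to compare their derivatives.

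For (a), the scaling follows from $V_R(t)=R^2V_1(t/R)$, $\Delta_R(t)=R^2\Delta_1(t/R)$ and $R^2/t^2=(t/R)^{-2}$, inserted into \eqref{eqn:Amin-def}. For the small-$t$ expansion, write $V_1(t)=\frac43t^3+\frac25t^5+O(t^7)$, which comes from \eqref{eqn:arcsin-exp}, and $\frac83 t\sqrt{1-t^2}=\frac83t-\frac43t^3+O(t^5)$. The terms of order $t$ cancel, since $\frac83 t-\frac2{t^2}\cdot\frac43t^3=0$. The coefficient of $t^3$ is then $-\frac43-\frac45+4=\frac{28}{15}$. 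As $t\uparrow1$ we have $\Delta_1\to0$ and $V_1\to\pi$, so $A^1_{\min}\to-2(1-\frac32)\pi=\pi$.

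For (b), direct differentiation of \eqref{eqn:Amin-def} collects all $(1-t^2)^{-1/2}$ terms into $\bigl(\frac83(1-2t^2)-8+12t^2\bigr)/\sqrt{1-t^2}$. This gives \eqref{eqn:Amin-derivative}. Positivity is automatic when $5t^2\ge4$. Otherwise it amounts to
\[
V_1(t)>g(t):=\frac{t^3(4-5t^2)}{3\sqrt{1-t^2}} .
\]
Both sides vanish at $0$. A short computation gives $g'(t)=(12t^2-33t^4+20t^6)/(3(1-t^2)^{3/2})$, while $V_1'(t)=(12t^2-12t^4)/(3(1-t^2)^{3/2})$. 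The difference of the derivatives is $t^4(21-20t^2)/(3(1-t^2)^{3/2})>0$, which proves (b).

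For (c), I would compute $t^4\,(A^1_{\min}/t^3)'=tA'-3A$ using (b). This simplifies to
\[
\frac{(10-9t^2)V_1(t)}{t^2}-\frac{t(40-44t^2)}{3\sqrt{1-t^2}} .
\]
Positivity is trivial for $11t^2\ge10$. Otherwise it is equivalent to $\psi(t)>0$, where
\[
\psi(t):=3(10-9t^2)\sqrt{1-t^2}\,V_1(t)-t^3(40-44t^2) .
\]
This is the main obstacle. A series check shows that the two sides agree up to order $t^5$, so the inequality is decided only at order $t^7$ and a crude bound will not do. I would first differentiate $\psi$, which eliminates $\arcsin$ except through $V_1$ multiplied by a polynomial-over-root factor. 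If the sign is still not evident, I would divide by the coefficient of $V_1$ and differentiate again, repeating until a polynomial inequality on $(0,\sqrt{10/11})$ remains. Failing that, I would compare coefficientwise the power series in $t^2$ using \eqref{eqn:arcsin-exp}, hoping all coefficients beyond $t^5$ are non-negative. Once monotonicity of $A^1_{\min}/t^3$ is established, the limit $\pi$ at $t=1$ from (a) gives $A^1_{\min}(t)/t^3<\pi$, which is \eqref{eqn:Amin-cubic}.
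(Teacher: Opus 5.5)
Parts (a) and (b) are correct. In (b) you take a different route from the paper. The paper gets $12V_1\sqrt{1-t^2}>4(4-5t^2)t^3$ from the crude bounds $V_1>\frac43t^3$ and $\sqrt{1-t^2}\ge1-t^2>1-\frac54t^2$. Your exact comparison $V_1'-g'=\frac{t^4(21-20t^2)}{3(1-t^2)^{3/2}}>0$ is just as short and makes the case split at $5t^2=4$ unnecessary.

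Part (c), however, is not proved. You correctly reduce it to $\psi>0$ and correctly note that this is decided only at order $t^7$. After that you only list conditional strategies without carrying any of them out, so \eqref{eqn:Amin-cubic} is never established. Your fallback would in fact fail. In the notation of the proof of Proposition~\ref{prop:direct}, $\sqrt{1-t^2}\,V_1(t)=\frac43t^3-\sum_{n\ge3}b_nt^{2n-1}$ with $b_n=\frac{n}{n-1}c_n>0$ and $b_n/b_{n-1}=\frac{2n-4}{2n-1}$. Hence the coefficient of $t^{2n-1}$ in $\psi$ is $27b_{n-1}-30b_n$, which is negative for all $n\ge16$. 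The paper closes the step differently: it keeps the decisive term, $V_1\ge\frac43t^3+\frac25t^5+\frac3{14}t^7$, squares, and checks a quintic polynomial on $[0,\frac{10}{11}]$.

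The missing step takes one line with exactly your technique from (b), provided you divide by the coefficient of $V_1$ before differentiating. Write $\psi=3(10-9t^2)\sqrt{1-t^2}\,\phi$ with
\[
    \phi(t):=V_1(t)-\frac{4t^3(10-11t^2)}{3(10-9t^2)\sqrt{1-t^2}},
    \qquad
    \phi'(t)=\frac{4t^6(46-45t^2)}{3(10-9t^2)^2(1-t^2)^{3/2}}>0 .
\]
Since $\phi(t)\to0$ as $t\downarrow0$, this gives $\psi>0$ on all of $(0,1)$, and the case split at $11t^2=10$ is not even needed. Your literal order (differentiate $\psi$ first) also works, but only on the restricted range. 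Indeed $\psi'=112t^4-\frac{3t(28-27t^2)}{\sqrt{1-t^2}}V_1$ becomes negative near $t=1$. The second step then produces the factor $644-675t^2$, which is positive on $(0,\sqrt{10/11})$.
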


\begin{proof}
    (a) The scaling follows from $\Delta_R(t)=R^2\Delta_1(t/R)$ and
    $V_R(t)=R^2V_1(t/R)$, the expansion from Lemma~\ref{lem:kernel}.
    The limit follows from
    $V_1(1)=\pi$ and $\Delta_1(1)=0$.

    (b) Differentiating \eqref{eqn:Amin-def} at $R=1$, the two $V_1$-terms
    combine, $\tfrac{10\,V_1}{t}+\tfrac{2(2-3t^2)V_1}{t^3}=\tfrac{4(1+t^2)\,V_1}{t^3}$,
    and the remaining algebraic terms give the second term of
    \eqref{eqn:Amin-derivative}.  Positivity is equivalent to
    $12\,V_1(t)\sqrt{1-t^2}>4(4-5t^2)\,t^3$.  If $5t^2\geq4$ the
    right-hand side is non-positive. Otherwise, $V_1(t)>\tfrac43t^3$, by
    integrating $V_1'(t)\geq4t^2$, and
    $\sqrt{1-t^2}\geq1-t^2>1-\tfrac{5}{4}t^2$ give the claim.

    (c)
    To show that $t\mapsto t^{-3}\, A_{\min}^1(t)$ is strictly increasing, we consider the derivative and claim that
    $t\bigl(A_{\min}^1\bigr)'(t)-3\,A_{\min}^1(t)>0$. By \eqref{eqn:Amin-def} and \eqref{eqn:Amin-derivative}
    \begin{equation*}
        t\bigl(A_{\min}^1\bigr)'(t)-3\,A_{\min}^1(t)
        = \frac{(10-9t^2)\,V_1(t)}{t^2}
        -\frac{4\,t\,(10-11t^2)}{3\sqrt{1-t^2}}.
    \end{equation*}
    If $11t^2\geq 10$, then the second term is non-positive and the first term is strictly positive, and the claim follows. So, assume $11t^2< 10$.
    It suffices to prove $3\,(10-9t^2)\,V_1(t)\sqrt{1-t^2}>4\,t^3(10-11t^2)$.
    Since $V_1'(t)=4t^2(1-t^2)^{-1/2}$ has non-negative
    Taylor coefficients, $V_1(t)\geq\tfrac43t^3+\tfrac25t^5+\tfrac3{14}t^7$, and inserting
    this bound reduces the claim, with $x:=t^2\in(0,\tfrac{10}{11})$, to
    \begin{equation*}
        (10-9x)\, \Bigl(1+\tfrac{3}{10}x+\tfrac{9}{56}x^2\Bigr)\, \sqrt{1-x} \geq 10-11x.
    \end{equation*}
    Both sides are positive in this range, so squaring is an equivalence, and
    \begin{equation*}
        (10-9x)^2\, \Bigl(1+\frac{3}{10}x+\frac{9}{56}x^2\Bigr)^2\, (1-x)-\bigl(10-11x\bigr)^2
        =x^2 \, \Phi(x),
    \end{equation*}
    where
    \[
        \Phi(x)=\frac{92}{7}-\frac{2097}{70}x+\frac{673569}{19600}x^2-\frac{1539}{100}x^3-\frac{16767}{15680}x^4-\frac{6561}{3136}x^5.
    \]
    Since for $x\in [0,\frac{10}{11}]$, $x^3\leq \frac{10}{11}x^2$, $x^4\leq x^2$ and $x^5\leq x^2$, we find
    \begin{equation*}
        \Phi(x) \geq \frac{92}{7}-\frac{2097}{70}\,x+\frac{132543}{7700}\,x^2 > 0.
    \end{equation*}
    Finally,
    $t^{-3}A_{\min}^1(t)$ increases from $\tfrac{28}{15}$ to
    $\lim_{t\uparrow1}t^{-3} A_{\min}^1(t)=\pi$ by (a), which is
    \eqref{eqn:Amin-cubic}.
\end{proof}

\begin{proposition}\label{prop:criterion}
    Let $K\subset\RR^2$ be an $r$-ball-convex body and let $R\geq r$.  If
    \begin{equation}\label{eqn:criterion}
        \vol_2(K)\;\geq\;A_{\min}^R\bigl(\tfrac{\diam K}{2}\bigr),
    \end{equation}
    then,
    \begin{equation*}
        p_{4,4}^R(K) \leq p_{4,4}^{R}(r_KB^2),
    \end{equation*}
    with equality if and only if $K$ is a disc.
\end{proposition}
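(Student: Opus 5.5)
The starting point is the decomposition \eqref{eqn:p43-decomp}, with $A:=\vol_2(K)=\vol_2(r_KB^2)$ fixed:
\begin{equation*}
    1-p_{4,4}^R(K)
    = 4\frac{\EE\vol_2([K]_3)}{A}
    + \frac{1}{A^2}\,\EE\bigl[6A\,V_R(L_K)+h_R(L_K)\bigr].
\end{equation*}
I will show that each of the two terms on the right is at least its value for $r_KB^2$. The first is Blaschke's theorem: $\EE\vol_2([K]_3)\geq\EE\vol_2([r_KB^2]_3)$, with equality only for ellipses. This requires no hypothesis.

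For the second term I set $g:=6A\,V_R+h_R$. By \eqref{eqn:g-derivative}, $g'(t)=\frac{24t^2}{h}\bigl(A-A_{\min}^R(t)\bigr)$. By Lemma~\ref{lem:Amin}(a),(b), $A_{\min}^R(t)=R^2A^1_{\min}(t/R)$ is strictly increasing in $t$. The half-distance $L_K$ takes values in $[0,\tau]$ with $\tau:=\tfrac12\diam K$, and the hypothesis \eqref{eqn:criterion} says $A\geq A_{\min}^R(\tau)$. Hence $A>A_{\min}^R(t)$ for $t<\tau$, so $g$ is strictly increasing on $[0,\tau]$.

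The disc $r_KB^2$ has the same area. By the isodiametric inequality its diameter $2r_K$ is at most $\diam K$, so $L_{r_KB^2}$ also takes values in $[0,\tau]$. I extend $g|_{[0,\tau]}$ to a strictly increasing function $\tilde g$ on $[0,\infty)$, for instance by continuing linearly with positive slope past $\tau$. Pfiefer's theorem \cite{Pfiefer:1990}*{Thm.~2}, as used for Corollary~\ref{cor:isoperimetric}, then gives
\begin{equation*}
    \EE g(L_K)=\EE\tilde g(L_K)\geq\EE\tilde g(L_{r_KB^2})=\EE g(L_{r_KB^2}),
\end{equation*}
with equality only if $K$ is a disc. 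Adding the two inequalities yields $p_{4,4}^R(K)\leq p_{4,4}^R(r_KB^2)$. For the equality case, equality in the $g$-term alone already forces $K$ to be a disc, and conversely a disc trivially gives equality.

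The main obstacle I expect is making sure Pfiefer's rearrangement really applies to a function that is only monotone on the relevant range, including the endpoint $t=\tau$ where $g'$ may vanish. This is why I use the extension $\tilde g$ together with the isodiametric comparison of supports. Strictness of $\tilde g$ on $[0,\tau)$ should suffice for Pfiefer's equality characterisation. If that is delicate, I would instead obtain the equality case from Blaschke's term combined with the strict Pfiefer inequality for $V_R$ alone; this needs some care, since only the combination $g$ is monotone.
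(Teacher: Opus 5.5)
Your proposal is correct and follows the paper's proof essentially step for step. The common steps are the decomposition \eqref{eqn:p43-decomp}, Blaschke/Groemer for the classical term, strict monotonicity of the combined kernel $6\vol_2(K)V_R+h_R$ on $[0,\tfrac12\diam K]$ via \eqref{eqn:g-derivative} and Lemma~\ref{lem:Amin}, the isodiametric inequality to bound the support of $L_{r_KB^2}$, and Pfiefer's theorem together with its equality case. Your extension $\tilde g$ beyond $\tau$ is a harmless technicality that the paper omits, and your fallback for the equality case is unnecessary, since $g'>0$ on $(0,\tau)$ together with continuity already makes $g$ strictly increasing on the closed interval.
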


\begin{proof}
    Set $D:=\diam K$. If $D=2r$, then $K=rB^2$ and the claim holds true.
    So assume $D<2r$ and set
    \[
        \Psi(t):=6\, \vol_2(K)\,V_R(t) + h_R(t), \qquad \text{for $t\in[0,D/2]$.}
    \]

    Note that $L_K\leq D/2$, and by the isodiametric
    inequality $\pi r_K^2 = \vol_2(K)\leq\pi(D/2)^2$, see \cite{Sch14}, we also have  $L_{r_K B^2}\leq r_K\leq D/2$.
    Since $D/2<r\leq R$, the kernel $\Psi$ is continuously differentiable on
    $[0,D/2]$, and \eqref{eqn:g-derivative}, the strict monotonicity of
    $A_{\min}^R$ from Lemma~\ref{lem:Amin}\,(b) and \eqref{eqn:criterion} give
    \begin{equation*}
        \Psi'(t) = \frac{24\,t^2}{\sqrt{R^2-t^2}}\,
        \bigl(\vol_2(K)-A_{\min}^R(t)\bigr) > 0
        \qquad\text{for every }t\in\bigl(0,\tfrac{D}{2}\bigr),
    \end{equation*}
    so that $\Psi$ is strictly increasing on $[0,D/2]$. Pfiefer's theorem \cite{Pfiefer:1990} therefore applies to it and yields
    \begin{equation}\label{eqn:kernel-gap}
        \EE\,\Psi(L_K) \geq \EE\,\Psi(L_{r_K B^2}),
    \end{equation}
    with equality only if $K$ is a disc.
    The decomposition \eqref{eqn:p43-decomp} yields
    \[
        1-p_{4,4}^R(K) = 4 \frac{\EE\vol_2([K]_3)}{\vol_2(K)} + \frac{\EE \Psi(L_K)}{\vol_2(K)^2}.
    \]
    Thus \eqref{eqn:kernel-gap} together with Groemer's inequality
    \cite{Groemer:1974} for the first term, yields
    $p_{4,4}^R(K)\leq p_{4,4}^R(r_KB^2)$.
    Equality for $p_{4,4}^R$ forces equality in \eqref{eqn:kernel-gap} and hence $K$ is a disc.
\end{proof}

\begin{proof}[Proof of Theorem~\ref{thm:intro-p43}]
    Write $D:=\diam K$ and assume that
    \[
        D^3 \leq \frac{8}{\pi} \vol_2(K)\, R.
    \]
    It suffices to verify the criterion \eqref{eqn:criterion}
    and to apply Proposition~\ref{prop:criterion}, which also gives the
    equality case.
    Since $K$ is $r$-ball-convex we have $D\leq 2r\leq 2R$. Thus, by
    Lemma~\ref{lem:Amin},
    \begin{equation*}
        A_{\min}^R\Bigl(\frac D2\Bigr)
        = R^2 A_{\min}^1\Bigl(\frac D{2R}\Bigr)
        \leq \pi R^2\, \Bigl(\frac{D}{2R}\Bigr)^3 = \frac{\pi D^3}{8R} \leq \vol_2(K),
    \end{equation*}
    which is \eqref{eqn:criterion}.

    Assume now that $K$ is a convex body of constant width $r$.  Such a body
    is $r$-ball-convex, see
    \cite{BLNP:2007}, hence $R$-ball-convex by
    Lemma~\ref{lem:elementary}\,(c), and satisfies $D=r$. By the
    Blaschke--Lebesgue theorem its area is at least
    that of the Reuleaux triangle of the same width, that is,
    $\vol_2(K)\geq\tfrac{\pi-\sqrt3}{2}\,r^2$, see \cite{Sch14}. Hence
    \begin{equation*}
        \frac{8}{\pi} \, \vol_2(K)\, R
        \geq \frac{4(\pi-\sqrt{3})}{\pi} r^3
        > r^3 = D^3.
    \end{equation*}
    Thus \eqref{eqn:small-body} holds true.
\end{proof}

We note that \eqref{eqn:criterion} holds true for any disc $K=sB^2$ for $s\in [0,R]$, since then $\diam K = 2s$ and by \eqref{eqn:Amin-cubic}
\[
    A_{\min}^R(s) \leq \pi \frac{s^3}{R} \leq s^2\pi = \vol_2(K),
\]
with equality exactly for $R=s$.

However, if $K=\mathrm{Sg}_R(\ell)$ is an $R$-ball-convex segment with $0<\ell<R$, then $\diam K = 2\ell$, $\vol_2(K) = V_R(\ell)$ and we find
\begin{equation*}
    A_{\min}^R(\ell) - \vol_2(K)
    = \frac{8}{3} \, \ell h - \frac{2h^2\, V_R(\ell)}{\ell^2}
    = \frac{2h}{\ell^2}\, \Bigl( \frac 43\ell^3 - h\, V_R(\ell)\Bigr) >0
\end{equation*}
since $\psi(\ell):= \frac{4}{3} \ell^3 - h\, V_R(\ell)$ satisfies $\psi(0)=0$ and $\psi'(\ell) = \ell V_R(\ell)/h >0$.

\section{Closing discussion}\label{sec:discussion}

Figure~\ref{fig:pnk} displays the values of $p_{n,k}^R(B^2)$, $n=3,4$, as
functions of $R$, and Figure~\ref{fig:evol} shows the corresponding expected
areas together with the second moment $m_2(R)$.

\begin{figure}[t]
    \centering
    \includegraphics[width=0.48\linewidth]{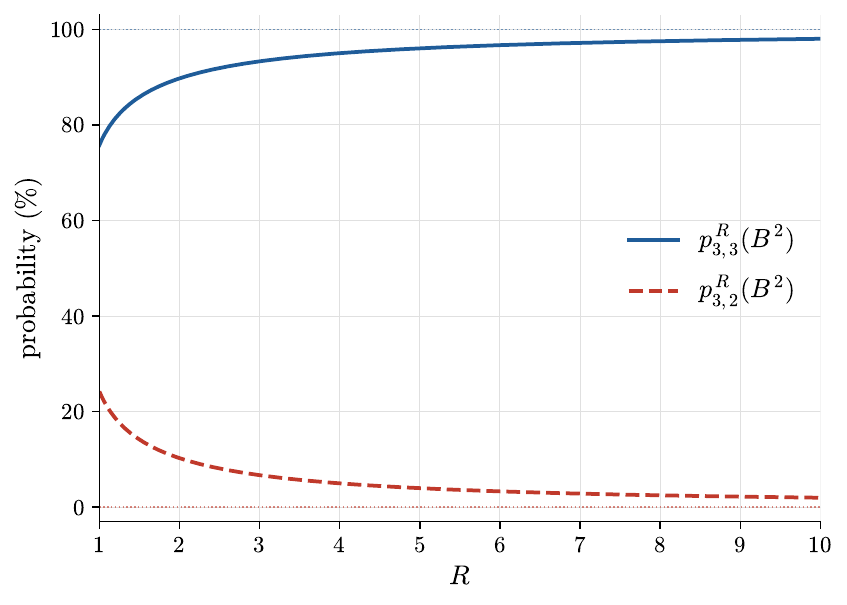}\hfill
    \includegraphics[width=0.48\linewidth]{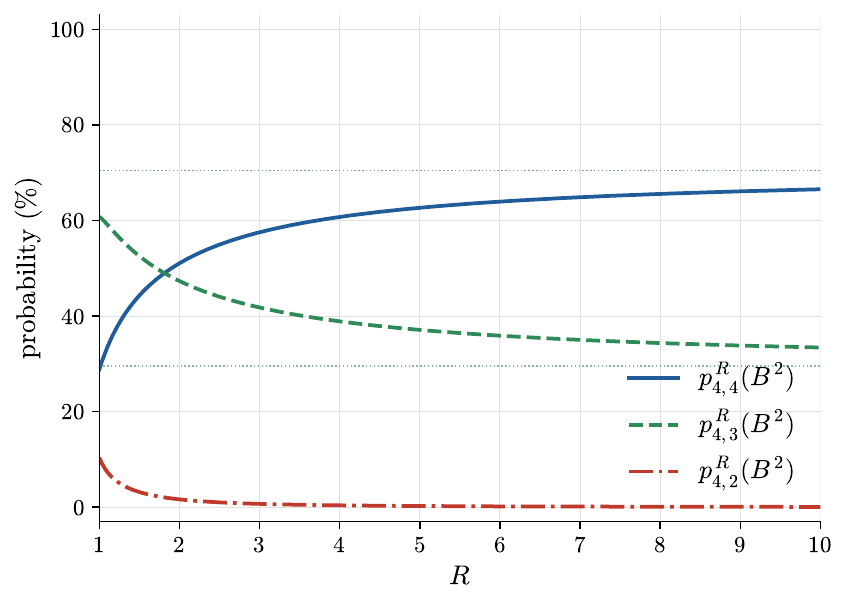}
    \caption{Left: the probabilities $p_{3,3}^R(B^2)$ and
        $p_{3,2}^R(B^2)=\PP(\varrho([B^2]_3)>R)$ as functions of $R\geq1$,
        computed from Corollary~\ref{cor:intro-circumradius}; the dotted
        lines mark the limits $100\%$ and $0\%$ as $R\to\infty$.
        Right: the probabilities $p_{4,k}^R(B^2)$, $k=2,3,4$, computed from
        \eqref{eqn:vol3-explicit}; the dotted lines mark the limits
        $p_{4,4}^R\to 1-\tfrac{35}{12\pi^2}$ and
        $p_{4,3}^R\to\tfrac{35}{12\pi^2}$.}
    \label{fig:pnk}
\end{figure}

\begin{figure}[t]
    \centering
    \includegraphics[width=0.48\linewidth]{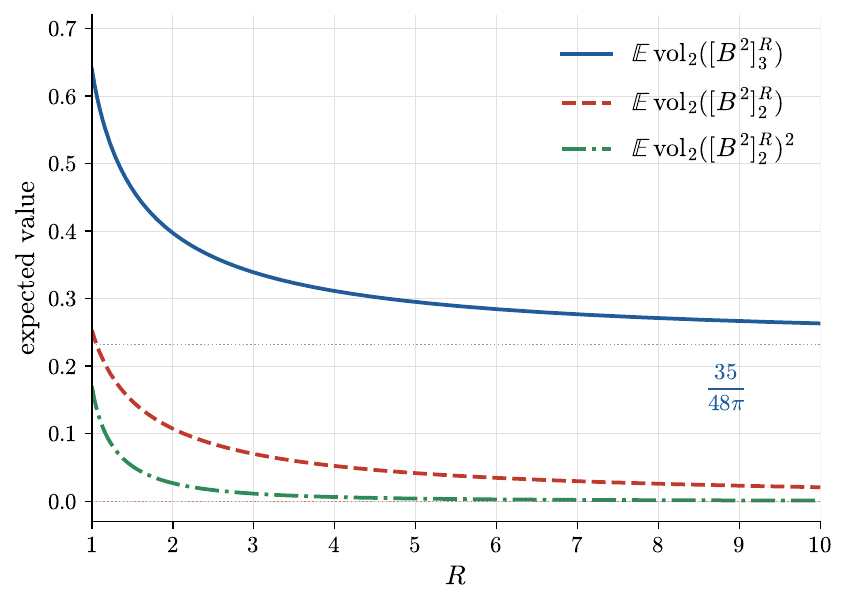}
    \caption{The expected areas $\EE\vol_2([B^2]_2^R)=m_1(R)$ and
        $\EE\vol_2([B^2]_3^R)$, together with the second moment
        $\EE\vol_2([B^2]_2^R)^2=m_2(R)$, as functions of $R\geq1$, from
        \eqref{eqn:k-moment} and \eqref{eqn:vol3-explicit}.  As $R\to\infty$,
        $\EE\vol_2([B^2]_2^R)\to0$ and $\EE\vol_2([B^2]_2^R)^2\to0$, at the
        rates $m_1\sim\tfrac{1024}{1575\pi R}$ and $m_2\sim\tfrac{7}{72R^2}$ of
        Remark~\ref{rem:asymptotics}, while
        $\EE\vol_2([B^2]_3^R)\to\EE\vol_2([B^2]_3)=\tfrac{35}{48\pi}$
        (dotted lines).}
    \label{fig:evol}
\end{figure}

\begin{question}[monotonicity in $R\mapsto p_{4,3}^R(K)$]\label{qu:monotone-p43}
    The map $R\mapsto\PP(f_0([K]_n^R)\leq k)$ is
    non-increasing for every $k$, so that $p_{n,2}^R(K)$ of \eqref{eqn:pn2} is
    non-increasing and $p_{n,n}^R(K)$ of \eqref{eqn:pnn} is non-decreasing.  For
    $2<k<n$ that argument gives nothing about $p_{n,k}^R(K)$ as it is a difference of
    two non-increasing functions of $R$. Since $R\mapsto p_{4,3}^R(B^2)$
    is decreasing, one question is if $R\mapsto p_{4,3}^R(K)$ is monotone on $[r,\infty)$ for
    every $r$-ball-convex body $K$ or is the observed
    monotonicity particular to the disc?
\end{question}

\begin{question}[extremal values of $p_{4,4}^1$]\label{qu:extremal}
    Blaschke's theorem identifies ellipses as the maximisers and triangles as
    the minimisers of $p_{4,4}(K)$.  The $R$-ball-convex analogue retains only
    similarity invariance, $p_{4,4}^{\lambda R}(\lambda K)=p_{4,4}^{R}(K)$ by
    \eqref{eqn:equivariance}, and the maximisation degenerates: shrinking the
    disc gives $p_{4,4}^1(\lambda B^2)=p_{4,4}^{1/\lambda}(B^2)\to
        p_{4,4}(B^2)$ as $\lambda\downarrow0$, while
    $p_{4,4}^1(K)\leq p_{4,4}(K)\leq p_{4,4}(B^2)$ for every $1$-ball-convex
    $K$ by Blaschke's theorem, so the
    supremum of $p_{4,4}^1$ equals the classical Sylvester probability
    $p_{4,4}(B^2)=1-\frac{35}{12\pi^2}$ and is not attained.  For the minimum
    we believe that for every $1$-ball-convex body $K\subset\RR^2$,
    \begin{equation*}
        p_{4,4}^{1}(K) \;\overset{?}{\geq}\; p_{4,4}^{1}(B^2)
        = 5-\frac{93}{2\pi^2} \approx 28.86\%,
    \end{equation*}
    with equality only for the disc.  Together with
    Conjecture~\ref{conj:p43-p44} this would yield
    \begin{equation*}
        p_{4,4}^1(B^2) \;\leq\; p_{4,4}^1(K) \;\leq\; p_{4,4}^1(r_K B^2).
    \end{equation*}
    Note that the disc is the unique $1$-ball-convex body of area $\pi$, as
    it equals the unit disc containing it, so the conjectured minimality is a
    fixed-area phenomenon.
\end{question}

Corollary~\ref{cor:isoperimetric} rests on the rearrangement theorem of Pfiefer
\cite{Pfiefer:1990}, which has no counterpart in the opposite direction: over
all planar convex bodies of given area $\EE L_K$ is unbounded.  Within the
class of $r$-ball-convex bodies, where $\diam K\leq2r$ forces $\EE L_K$ to be finite,
the natural candidate for the maximiser at a given area is the $r$-ball-segment,
and deterministically it is extremal in exactly this direction.  At a given
area it is the unique maximiser of the diameter, since
$\conv_r\{\bx,\by\}\subset K$ for any two points $\bx,\by$ realising
$\diam K$. It is the unique maximiser of the perimeter, by the reverse
isoperimetric inequality of Borisenko and Drach \cite{BD:2014}, see also
\cite{FKV:2016}. It is also the unique maximiser of the circumradius in a given
volume in every dimension, according to a theorem of Bezdek \cite{Bezdek:2021}.

\begin{figure}[t]
    \centering
    \includegraphics[width=0.65\linewidth]{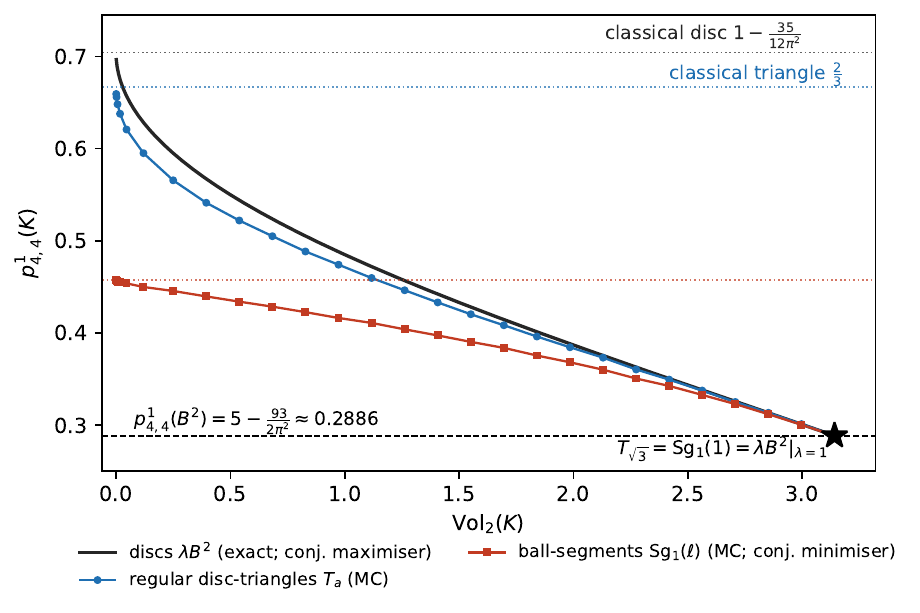}
    \caption{The probability $p_{4,4}^1(K)$ as a function of the area
    $\vol_2(K)\in(0,\pi]$ along three families of $1$-ball-convex bodies:
    the ball-segments $\mathrm{Sg}_1(\ell)$ and the disc-triangles
    $T_a:=\conv_1\{\ba_1,\ba_2,\ba_3\}$ spanned by the vertices of a
    regular triangle of side $a\in(0,\sqrt3]$, for which the values are
    Monte Carlo estimates from $N=2\times10^6$ samples per body
    (standard error $\leq4\times10^{-4}$), and the
    discs $\lambda B^2$, $\lambda\in(0,1]$.  The solid black curve is the
    exact value for the discs, computed from
    $p_{4,4}^1(\lambda B^2)=p_{4,4}^{1/\lambda}(B^2)$ and
    \eqref{eqn:vol3-explicit}.
    }
    \label{fig:p44-families}
\end{figure}

\begin{question}[the ball-segment as extremal body at given area]%\label{qu:segment}
    Among all $1$-ball-convex bodies $K\subset\RR^2$ of given area, does the
    $1$-ball-segment of that area maximise $\EE\vol_2\left([K]_2^R\right)^k$ for
    all $k\geq1$ and $R\geq 1$? By \eqref{eqn:pn2}, this would complement Corollary~\ref{cor:isoperimetric} exactly, that is,
    \begin{equation*}
        p_{n,2}^R(r_KB^2) \;\leq\; p_{n,2}^R(K) \overset{?}{\leq} p_{n,2}^R(\mathrm{Sg}_1(\ell_K)),
    \end{equation*}
    where $\ell_K\in [r_K,1]$ is uniquely determined by $\vol_2(\mathrm{Sg}_1(\ell_K))=\vol_2(K)$.
    For
    $p_{3,2}^R$ and $p_{4,2}^R$ only the cases $k=1,2$ are needed.
    The
    strongest form of the question is stochastic: is the half-distance $L_K$ of
    two independent uniform random points in $K$ stochastically dominated by
    that of the $1$-ball-segment of equal area?  Both endpoints of the
    distribution are settled in the ball-segment's favour: near the right endpoint
    the ball-segment uniquely maximises the diameter, and as $t\downarrow0$ the
    expansion
    $\PP(2L_K\leq t)=\pi t^2/\vol_2(K)
        -\tfrac{2}{3}\operatorname{per}(K)\,t^3/\vol_2(K)^2+O(t^4)$,
    together with the reverse isoperimetric inequality \cite{BD:2014}, shows
    that the ball-segment has the pointwise smallest distribution function to
    the third order at the left endpoint.  Pfiefer's
    two-point rearrangement is unavailable here, since the maximising
    rearrangement degenerates without the curvature constraint. The
    rearrangement techniques behind \cite{BD:2014} and the sharp mean-distance
    bounds of Bonnet, Gusakova, Th\"ale and Zaporozhets \cite{BGTZ:2021}
    suggest themselves instead.

    We believe that the ball-segment mirrors the disc of
    Conjecture~\ref{conj:p43-p44} in the entire vertex distribution, see Figure \ref{fig:p44-families}.
    So we also ask: does the $1$-ball-segment of given area minimise
    $p_{4,4}^R(K)$, that is,
    \begin{equation*}
        p_{4,4}^R(\mathrm{Sg}_1(\ell_K)) \overset{?}{\leq} p_{4,4}^R(K)
        \;\overset{?}{\leq}\; p_{4,4}^R(r_KB^2),
    \end{equation*}
    the right-hand inequality being Conjecture~\ref{conj:p43-p44}.  The
    family of ball-segments terminates at the disc:
    $\mathrm{Sg}_1(1)=B^2$ and
    $\ell\mapsto\vol_2(\mathrm{Sg}_1(\ell))$ increases from $0$ to $\pi$ on
    $(0,1]$.  A positive answer and if $\ell\mapsto p_{4,4}^1(\mathrm{Sg}_1(\ell))$
    is monotonically decreasing from $\lim_{\ell\downarrow 0}p_{4,4}^1(\mathrm{Sg}_1(\ell))$ to $p_{4,4}^1(B^2)$,
    would therefore prove the lower bound of Question~\ref{qu:extremal}.
\end{question}

\begin{remark}
    We note that using the $R$-ball-convex Efron--Buchta identities is only one of the possible methods to determine Sylvester probabilities. In fact, one can also use Blaschke--Petkantschin formulas for spheres, such as Theorem~7.3.1 in \cite{SW:2008}. However, it seems that this transformation is effective only in the case when $K=rB^2$. Another possible approach could be via Crofton's theorem, which has been used successfully for several classical problems before. It remains to be seen whether this technique can be adapted to the $R$-ball-convex Sylvester setting.
\end{remark}

\section*{Funding}
This research was supported by NKFIH project no.~150151. Project no.~150151 has been implemented with the support provided by the Ministry of Culture and Innovation of Hungary from the National Research, Development and Innovation Fund, financed under the ADVANCED\_24 funding scheme.

\section*{AI-usage disclosure}
During the preparation of this work Claude (Anthropic) and ChatGPT (OpenAI) were used for mathematical discussions and editorial assistance. In particular, these tools helped check and simplify the closed form expressions for $m_1(R),m_2(R),\mu(R)$ as presented in Section \ref{sec:moments} and pointed out the connection to the numerical results in \cite{LeCaer:2017} for the circumradius distribution.
They were also used to generate scripts to run Monte Carlo checks for the statements and claims in this article, in particular for Figure \ref{fig:p44-families}.
All mathematical claims and proofs were independently
checked, revised, and written in their final form by the authors.

\raggedbottom

\end{document}